\numberwithin{equation}{section}
\newtheorem{theorem}{Theorem}[section]
\newtheorem{definition}[theorem]{Definition}
\newtheorem{proposition}[theorem]{Proposition}
\newtheorem{remark}[theorem]{Remark}
\newcommand{\dd}{{\rm d}}
\newcommand{\diver}{{\rm div}}
\newcommand\DT[1]{\mathchoice
{{\buildrel{\hspace*{.1em}\text{\LARGE.}}\over{#1}}}
{{\buildrel{\hspace*{.1em}\text{\Large.}}\over{#1}}}
{{\buildrel{\hspace*{.1em}\text{\large.}}\over{#1}}}
{{\buildrel{\hspace*{.1em}\text{\large.}}\over{#1}}}}
\newcommand{\R}{\mathbb{R}}
\newcommand{\N}{\mathbb{N}}
\renewcommand{\O}{\Omega}
\newcommand{\e}{\varepsilon}
\renewcommand{\a}{\alpha}
\newcommand{\f}{\psi}
\renewcommand{\d}{{\rm d}}
\newcommand{\la}{\left\langle}
\newcommand{\ra}{\right\rangle}
\newcommand{\id}{\mbox{{\rm id}}}
\newcommand{\bulet}{\mbox{\thinspace$_{^{\mbox{$^\bullet$}}}$}}
\newcommand\llangle{\mathchoice{\big\langle\hspace{-.3em}\big\langle}
{\langle\hspace{-.2em}\langle}{\langle\!\langle}{\langle\!\langle}}
\newcommand\rrangle{\mathchoice{\big\rangle\hspace{-.3em}\big\rangle}
{\rangle\hspace{-.2em}\rangle}{\rangle\!\rangle}{\rangle\!\rangle}}
\newcommand{\Gstatic}{G}
\newcommand{\Gm}{{\mathscr G}}
\newcommand{\ccoupl}{\vec{a}}
\newcommand{\mathcalI}{\Theta}
\begin{document}

\begin{sloppypar}

\noindent{\LARGE\bf Thermodynamically-consistent mesoscopic model of\\
the ferro/paramagnetic transition}

\bigskip
\bigskip

\noindent
{\sc B.~Bene\v sov\'a$^{1,4}$, M.~Kru\v z\'\i k$^{2,3}$, T.~Roub\'\i\v cek$^{1,2,4}$}
\bigskip
\bigskip

\noindent
\noindent{\footnotesize
$^1$ Institute of Thermomechanics of the ASCR,
Dolej\v skova 5, CZ-182 00 Praha 8, Czech Republic.\\[-.4em]
$^2$ Institute of Information Theory and Automation of the ASCR,\\[-.4em]
\hspace*{.8em}Pod vod\'arenskou v\v e\v z\'\i\ 4, CZ-182 08 Praha 8,
Czech Republic. \\[-.4em]
$^3$ Faculty of Civil Engineering, Czech Technical University,
Th\'{a}kurova 7, 166 29 Praha 6, Czech Republic.\\[-.4em]
$^4$ Mathematical Institute, Charles University,
%\\\hspace*{.8em}
Sokolovsk\'a 83, CZ-186~75~Praha~8, Czech Republic.
}

\bigskip\begin{center}
\begin{minipage}[t]{.85\textwidth}
{\small
\baselineskip=9pt
\noindent{\it Abstract}:
A continuum evolutionary model for micromagnetics is presented that, beside the
standard magnetic balance laws, includes thermo-magnetic coupling.
To allow conceptually efficient computer implementation, inspired by
relaxation method of static minimization problems, our model is
mesoscopic in the sense that possible fine spatial oscillations of the
magnetization are modeled by means of Young measures. Existence of weak
solutions is proved by backward Euler time discretization.
\medskip

\noindent{\it AMS Subj. Classification}:
35K85, % PDE, var. ineq.
35Q60, % PDE, electromag.
49S05, % var. princples of phys
%65M60,
78A30, % magnetostatics
78M30, % magne- variational methods
80A17. % thermodynamics of continua
\medskip

\noindent{\it Key words}: micromagnetism, relaxation, Young measures, thermodynamics.
}
\end{minipage}
\end{center}

\bigskip\bigskip

\section{Introduction, static problem and its relaxation}
% ~~~~~~~~~~~~~~~~~~~~~~~~~~~~~~~~~~~~~~~~~~~~~~~
Micromagnetics is a continuum theory introduced by Brown \cite{brown3}
to describe the equilibrium states of saturated ferromagnets. The equilibria are determined as minimizers of a functional with exchange, anisotropy, Zeeman (interaction), and magnetostatic energy contributions. This theory also predicts the formation of domain structures. The reader is referred to \cite{kruzik-prohl} for a recent survey on the topic. In the isothermal situation, the configuration of a rigid ferromagnetic body occupying a bounded domain $\O\subset\R^d$ is typically described by a magnetization $m:\O\to\R^d$ which vanishes if the temperature $\theta$ is above the so-called {\it Curie temperature} $\theta_{\rm c}$ and no external magnetic field is applied.

On {\it microscopic level}, the magnetic {\it Gibbs energy} consists of
five parts, namely an {\it anisotropy energy} $\int_\O\psi(m,\theta)\,\d x$, an {\it exchange energy} $\frac12\int_\O\e|\nabla m(x)|^2\d x$ having a quantum-theoretical origin, the non-local {\it magnetostatic energy} $\frac{1}{2}\int_{\R^d}\!\mu_0|\nabla u_m(x)|^2\d x =\frac12\int_\O m{\cdot}\nabla u_m\,\d x$, an {\it interaction energy} $-\int_\O h(x)\!\cdot\!m(x)\,\d x$ involving the outer magnetic field $h$ and, finally, a \emph{calorimetric term} $\int_\O \psi_0 \, \d x$. In the anisotropic energy, we denoted $\psi$ its density that will depend on the material properties and should exhibit crystallographic symmetry. Furthermore, $\psi$ is supposed to be a nonnegative function, even in its first variable. In the magnetostatic energy, $u_m$ is the magnetostatic potential related to $m$ by $\mbox{div}(\mu_0\nabla u_m{-}\chi_\O m)=0$ arising from simplified
magnetostatic Maxwell equations. Here $\chi_\O:\R^d\to\{0,1\}$ denotes
the characteristic function of $\O$ and $\mu_0$ is the permeability of vacuum. Moreover, it shall be noted that, due to the quantum mechanical origin, $\e$ in the exchange energy is very small from the macroscopic point of view. 

A widely accepted model for steady-state isothermal configurations is due to Landau and Lifshitz \cite{landau-lifshitz,landau-lifshitz-1} (see e.g. Brown \cite{brown3} or Hubert and Sch\"afer \cite{hubert-schafer}), relying on a minimum-of-Gibbs'-energy principle with $\theta$ as a parameter, i.e.
\begin{align}\label{micromagnetics}
\left.\begin{array}{ll}
\mbox{minimize } & \Gstatic_\e(m)
:=\displaystyle{\int_\O \Big(\psi(m,\theta)+\frac12 m{\cdot}\nabla u_m
+\frac\e2 |\nabla m|^2 - h{\cdot}m \,\d x\Big)\,\d x}\\[3mm]
\mbox{subject to } & \mbox{div}(\mu_0\nabla u_m-\chi_\O m)=0 \ \mbox{ in } \R^d \ ,\\[2mm] & m\in H^1(\O;\R^d),\ \ u_m\in H^1(\R^d),
\end{array}\right\}
\end{align}
where the anisotropy energy $\psi$ is considered in the form
\begin{equation}
\psi(m,\theta):=\phi(m)+a_0(\theta-\theta_{\rm c})|m|^2-\psi_0(\theta),
%\ \ \ \ a_0>0 .
\label{def-of-psi-I}
\end{equation}
where $a_0$ determines the intensity of the thermo-magnetic-coupling. To see a
paramagnetic state above Curie temperature $\theta_{\rm c}$, one should
consider $a_0>0$. The \emph{isothermal part of the anisotropy energy density} $\phi:\R^d\to [0,\infty)$ typically consists of two components $\phi(m) = \phi_\mathrm{poles}(m) + b_0|m|^4$, where $\phi_\mathrm{poles}(m)$ is chosen in such a way to attain its minimum value (typically zero) precisely on lines $\{ ts_\a;\ t\in\R\}$, where each $s_\a\in\R^d$, $|s_\a|=1$ determines an {\it axis of easy magnetization}. Typical examples are $\a=1$ for uni-axial magnets and $1\le\a\le 3$ or $1\le\a\le 4$ for cubic magnets. On the other hand, $b_0|m|^4$ is used to assure that, for $\theta<\theta_{\rm c}$, $\f(\cdot,\theta)$ is minimized at $ts_\a$ for $|t|^2=(\theta_{\rm c}-\theta)a_0/(2b_0)$. Such energy has already been used in \cite{podio-roubicek-tomassetti}. For $\e>0$, the exchange energy $\e|\nabla u|^2$ guarantees that the problem \eqref{micromagnetics} has a solution $m_\e$. Zero-temperature limits of this model consider, in addition, that the minimizers to \eqref{micromagnetics} are constrained to be valued on the sphere with the radius $\sqrt{a_0\theta_{\rm c}/(2b_0)}$ and were investigated, e.g., by Choksi and Kohn \cite{choksi-kohn}, DeSimone \cite{desim}, James and Kinderlehrer \cite{jam-kin}, James and M\"uller \cite{jam-mu}, Pedregal \cite{pedregal0,pedregal}, Pedregal and Yan \cite{PedregalYan} and many others.

For $\e$ small, minimizers $m_\e$ of \eqref {micromagnetics} typically exhibit fast spatial oscillations, a so-called {\it fine structure}. Indeed, the anisotropy energy, which forces magnetization vectors to be aligned with the easy axis (axes), competes with the magnetostatic energy preferring divergence-free magnetization fields. If the exchange energy term is neglected, and this is a justified simplification of the functional for large ferromagnets \cite{desim}, nonexistence of a minimum for uniaxial ferromagnets can be expected and was shown in \cite{jam-kin} for the zero external field $h$ and zero temperature. Hence, various concepts of relaxation (in the sense of variational calculus) were introduced in order to cope with this phenomenon. The idea is to capture the limiting behavior of minimizing sequences of $\Gstatic_\e(m)$ as $\e \to 0$. This leads to a relaxed problem \eqref{relaxedmicromagnetics} involving so-called Young measures $\nu$'s which describe the relevant ``mesoscopical'' character of the fine structure of $m$. We call this ``limit'' a {\it microstructure}.

It can be proved \cite{desim,pedregal0} that this limit configuration $(\nu,u_m)$ solves the following minimization problem involving ``mesoscopical'' Gibbs' energy $\Gstatic$:
\begin{align}\label{relaxedmicromagnetics}
\left.\begin{array}{llr}
\mbox{minimize} &
\displaystyle{\Gstatic(\nu,m) :=\int_\O \big(\f\bulet\nu+\frac12 m{\cdot}\nabla u_m-h{\cdot}m \big)\,\d x}\ \ \ \\[2mm]
\mbox{subject to} & \displaystyle{\mbox{div} \big(\mu_0\nabla u_m-\chi_\O m \big)=0} \ \ \ \mbox{ on } \R^d,\\[2mm] & m=\id \bulet\nu\hspace{7.4em}\mbox{ on } \O,\\[2mm]
& \nu\!\in\! {\mathscr Y}^p(\O;\R^d),\ \ \ m\!\in\! L^p(\O;\R^d), \ \ \ u_m\!\in\! H^1(\R^d)\ ,
\end{array}\right\}\hspace*{-1cm}
\hspace*{-.2cm}
\end{align}
where the ``momentum'' operator ``$\,\bulet\,$'' is defined by $[f\bulet\nu](x):=\int_{\R^d}f(s)\nu_x(\d s)$, $\id:\R^d\to\R^d$ denotes the identity and $\nu \in {\mathscr Y}^p(\O;\R^d)$. Here, the set of {\it Young measures} ${\mathscr Y}^p(\O;\R^d)\subset L_{\rm w}^\infty(\O;\mathcal{M}(\R^d))\cong L^1(\O;C_0(\R^d))^*$ is the set of all weakly measurable essentially bounded mappings $x\mapsto\nu_x:\O\to \mathcal{M}(\R^d)\cong C_0(\R^d)^*$ such that $\int_\O\int_{\R^d}|s|^{p}\nu_x({\rm d}s){\rm d}x<+\infty$; here $C_0(\R^d)$ denotes the set of continuous functions with compact support and thus $\mathcal{M}(\R^d)$ is the set of Radon measures on $\R^d$, and the adjective ``weakly measurable'' means that $v\bulet\nu$ is Lebesgue measurable for any $v\in C_0(\R^d)$. A natural embedding $i:L^{p}(\O;\R^d)\to{\mathscr Y}^p(\O;\R^d)$ of a magnetization $m$ is a Young measure $\nu=i(m)$ defined by $\nu_x=\delta_{m(x)}$ with $\delta_s$ denoting the Dirac measure at $s\in \R^d$. We say that a sequence $\{\nu^k\}_{k\in\N}\subset{\mathscr Y}^p(\O;\R^d)$ converges weakly* to $\nu$ if
\begin{align}\label{Ym-def}
\forall f\!\in\!L^1(\O;C_0(\R^d)):\ \ \ \ \lim_{k\to\infty}\int_\O f\bulet\nu^k\d x\ =\ \int_\O f\bulet\nu\,\d x
\end{align}
The set ${\mathscr Y}^{p}(\O;\R^d)$ is convex, metrizable, compact, and contains weak*-densely the set of magnetizations $m\in L^{p}(\O;\R^d)$ if embedded via $i$. One can thus call ${\mathscr Y}^{p}(\O;\R^d)$ a {\it convex compactification} of the set of admissible magnetizations, cf.~also \cite[Chapter~3]{roubicek0}. The so-called {\it relaxed problem} \eqref{relaxedmicromagnetics} can then be understood simply as a continuous extension of the original problem \eqref{micromagnetics} considered for $\e=0$. Let us note that the problem \eqref{relaxedmicromagnetics} has a convex structure. Moreover, it captures the {\it multiscale} character of the problem. The Young measure solving \eqref{relaxedmicromagnetics} encodes limiting oscillating behavior of minimizing sequences of \eqref{micromagnetics} while its first moment, the magnetization, resolves the macroscopic magnetization $m$. Due to these properties we call the model in \eqref{relaxedmicromagnetics} {\it mesoscopic}. An equivalent way how to relax \eqref{micromagnetics} is to replace $\f$ by its convex envelope. The drawback for numerical calculations is that one needs to know the convex envelope explicitly. There were many attempts to design numerical schemes for both \eqref{micromagnetics}, as well as for \eqref{relaxedmicromagnetics} in the zero-temperature situation; cf.~e.g.~\cite{carstensen-prohl,kruzik-prohl-1,luskin-ma}. Departing from \eqref{relaxedmicromagnetics} and following the ideas of \cite{mielke-theil-levitas}, a model of the isothermal rate-independent evolution exhibiting hysteretic response was proposed and analyzed in
\cite{roubicek-kruzik-1,roubicek-kruzik-2}.

Our motivation is to merge concepts of relaxation that can successfully fight with multiscale character of the problem with recent ideas to build thermodynamically consistent mesoscopic models in anisothermal situations. A closely related thermodynamically consistent model on the microscopic level was introduced in \cite{podio-roubicek-tomassetti} to ferro/para magnetic transition. Another related microscopic model with a prescribed temperature field was investigated in \cite{banas-prohl-slodicka}. The goal is to develop a model that would be supported by rigorous analysis and would allow for computationally efficient numerical implementation like in \cite{kruzik-prohl-1,kruzik-roubicek,kruzik-roubicek-2} where such a model was used in the isothermal variant.

This paper is structured as follows: In Section 2 we introduce the concept of general standard materials and handle thermodynamics, in Section 3 we give a weak formulation of the model equations, in Section 4 we give a time-discretization of the model equations and prove a priori estimates, in Section 5 we then prove convergence of the discrete solutions. Finally in Section 6, we present a generalization of the model, where we allow for a weaker dissipation so that at least some internal parameters can be considered evolving
in a purely rate-independent manner.

\section{Evolution problem, dissipation, mesoscopic Gibbs free energy}\label{sect-evol}
% ~~~~~~~~~~~~~~~~~~~~~~~~~~~~~~~~~~~~~~~~~~~~~~~~~~~~~~~~~~~~
If the external magnetic field $h$ varies during a time interval $[0,T]$, $T>0$, the energy of the system as well as the magnetization evolve, too. Change of magnetization may cause energy dissipation. As the magnetization is the first moment of the Young measure we relate the dissipation on the mesoscopic level to temporal changes of some moments of $\nu$ and consider these moments as separate variables. This approach was already used in micromagnetics in \cite{roubicek-kruzik-1,roubicek-kruzik-2} and proved to be useful also in modeling of dissipation in shape memory materials, see e.g.~\cite{mielke-roubicek}. In view of \eqref{def-of-psi-I}, we restrict ourselves to the first two moments defining $\lambda=(\lambda_1,\lambda_2)$ giving rise to the constraint
\begin{equation}
\lambda=L\bulet\nu \ ,\qquad \text{ where }\ L(m):=(m,|m|^2)
\label{moment-constraint}
\end{equation}
and consider the specific dissipation potential
\begin{equation}\label{form-of-zeta}
\zeta(\DT{\lambda}):= \delta_S^*(\DT{\lambda})+\frac\epsilon{q}|\DT{\lambda}|^q,\qquad q\ge2,
\end{equation}
where $\delta_S^*:\R^{d+1}\to\R^+$ is the Legendre-Fenchel conjugate function to the indicator function $\delta_S:\R^{d+1}\to\{0,+\infty\}$ to a convex bounded neighborhood of 0.
The set $S$ determines activation threshold for the evolution of $\lambda$.
The function $\delta_S^*\ge0$ is convex and degree-1 positively homogeneous with $\delta_S^*(0)=0$. In fact, the first term describes purely hysteretic losses, which are rate-independent (the contribution of which we consider dominant) and the second term models rate-dependent dissipation.

In view of \eqref{def-of-psi-I}--\eqref{relaxedmicromagnetics}, the specific mesoscopic Gibbs free energy, expressed in terms of $\nu$, $\lambda$ and $\theta$, reads as
\begin{subequations}\begin{align}
g(t,\nu,\lambda,\theta):=\phi\bulet \nu+ (\theta{-}\theta_{\rm c}) \ccoupl{\cdot}\lambda-\psi_0(\theta)+\frac{1}{2}m{\cdot}h_{\rm dem}- h(t){\cdot}m \label{gibbs}\\
\qquad\text{with }\ m=\id \bulet \nu\quad \text{and}\quad h_{\rm dem}=\nabla u_{m}, \end{align}
\end{subequations}
where we denoted $\ccoupl :=(0,\ldots,0,a_0)$ with $a_0$ from \eqref{def-of-psi-I} and, of course, $u_m$ again from \eqref{micromagnetics},which makes $g$ non-local.

In what follows, we relax the constraint \eqref{moment-constraint} by augmenting the total Gibbs free energy (i.e. $g$ integrated over $\O$) by the term $\frac\varkappa2\|\lambda-L\bulet\nu\|^2_{H^{-1}(\O;\R^{d+1})}$ with (presumably large) $\varkappa\in\R^+$ and with $H^{-1}(\O;\R^{d+1})\cong H^1_0(\O;\R^{d+1})^*$. Thus, $\lambda$'s no longer exactly represent the ``macroscopical'' momenta of the magnetization but rather are in a position of a \emph{phase field}.
Let us choose a specific norm on $H^{-1}(\O;\R^{d+1})$ as $\|f\|_{H^{-1}(\O;\R^{d+1})}:=\|\chi\|_{L^2(\O;\R^{d+1})}$ with $\chi\in L^2_{\rm grad}(\O;\R^{d})$ defined uniquely by ${\rm div}\,\chi=\lambda{-}L\bulet\nu$ where $L^2_{\rm grad}(\O;\R^{d}):=\{\nabla v;\ v\in H^1_0(\O;\R^{d+1})\}$. In other words, $\|f\|_{H^{-1}(\O;\R^{d+1})}=\|\nabla\Delta^{-1}f\|_{L^2(\O;\R^{d+1})}$ with $\Delta^{-1}$ meaning the inverse of the Dirichlet boundary-value problem for the Laplacean $\Delta:H^1_0(\O;\R^{d+1})\to H^{-1}(\O;\R^{d+1})$ .
We define the mesoscopic Gibbs free energy $\Gm$ by
\begin{align}\label{def-of-psi}
\hspace{-3ex}\Gm(t,\nu,\lambda, \theta):=\!\int_\O\! \Big(g(t,\nu,\lambda,\theta)+\frac\varkappa2|\nabla\Delta^{-1} (\lambda - L \bulet \nu)|^2 \Big)\,\d x 
\end{align}

Notice that the $H^{-1}$norm defined above is equivalent to the standard $H^{-1}$-norm defined as $\|f\|_{-1}:=\sup_{v\in W^{1,2}_0(\O;\R^{d+1}),\ v\ne 0}\la f,v\ra/\|v\|_{W^{1,2}_0(\O;\R^{d+1})}$. Indeed, if $h\in W^{1,2}_0(\O;\R^{d+1})$ solves $-\Delta h=f$ and $\|v\|_{W^{1,2}_0(\O;\R^{d+1})}:=\|\nabla v\|_{L^2(\O;\R^{d+1})}$ then
$$
\|f\|_{-1}= \sup_{v\in W^{1,2}_0(\O;\R^{d+1}),\ v\ne 0}\|v\|^{-1}_{W^{1,2}_0(\O;\R^{d+1})}\int_\O\nabla h\cdot\nabla v\,\d x\ge \beta\|\nabla h\|_{L^2(\O;\R^{d+1})}\ ,$$
where $\beta>0$ is the ellipticity constant of $B[h,v]:=\int_\O\nabla h\cdot\nabla v\,\d x$. On the other hand,
$$
\|f\|_{-1}=\sup_{v\in W^{1,2}_0(\O;\R^{d+1}),\ v\ne 0}\|v\|^{-1}_{W^{1,2}_0(\O;\R^{d+1})}\int_\O\nabla h\cdot\nabla v\,\d x\le \|\nabla h\|_{L^2(\O;\R^{d+1})}$$ by the Cauchy-Schwartz inequality.

We shall give a certain justification of the penalized model at the end of the section. Yet, we should
emphasize that we will consider $\varkappa$ fixed thorough this article.

The value of the internal parameter may influence the magnetization of the system (and vice versa) and, on the other hand, dissipated energy may influence the temperature of the system, which, in turn, may affect the internal parameter. In order to capture these effects we employ the concept of general standard materials \cite{halphen-nguyen} known from continuum mechanics and couple our micromagnetic model with the entropy balance with the rate of dissipation on the right-hand side; cf.\ \eqref{EntropyEquation}.
Then $\nu$ is considered evolving purely quasistatically
according to the  minimization principle of the Gibbs energy $\mathcal G(t,\cdot,\lambda,\theta)$ while the ``dissipative'' variable $\lambda$ is considered as governed by the {\it flow rule} in the form:
\begin{align}\label{flow-rule-physically}
\partial\zeta(\DT\lambda)=\partial_\lambda g(t,\nu,\lambda,\theta)
\end{align}
with $\partial \zeta$ denoting the subdifferential of the convex functional $\zeta(\cdot)$ and similarly $\partial_\lambda g$ is the subdifferential of the convex functional $g(t,\nu,\cdot,\theta)$. In our specific choice, \eqref{flow-rule-physically} takes the form
$\partial\delta_S^*(\DT{\lambda})+\epsilon|\DT{\lambda}|^{q-2}\DT{\lambda}+(\theta{-}\theta_{\rm c})\ccoupl\ni \varkappa\Delta^{-1}(\lambda - L \bulet \nu)$.
Furthermore, we define the specific {\it entropy} $s$ by the standard Gibbs relation for entropy, i.e.\ $s=-g'_\theta(t,\nu,\lambda,\theta)$, and write the {\it entropy equation}
\begin{align}\label{EntropyEquation}
\theta\DT s+{\rm div}\,j=\xi(\DT\lambda)=\,\text{ heat production rate},
\end{align}
where $j$ is the heat flux. In view of \eqref{form-of-zeta},
\begin{align}
\xi(\DT\lambda)=\partial\zeta(\DT\lambda){\cdot}\DT\lambda
=\delta_S^*(\DT{\lambda})+\epsilon|\DT{\lambda}|^q.
\end{align}
We assume $j$ is the heat flux assumed governed by the {\it Fourier law}
\begin{align}
j=-\mathbb{K}\nabla\theta
\end{align}
with a {\it heat-conductivity} tensor $\mathbb{K}=\mathbb{K}(\lambda,\theta)$.
Now, since $s=-g'_\theta(t,\nu,\lambda,\theta)=-g'_\theta(\lambda,\theta)$,
it holds $\theta\DT s=-\theta g''_\theta(\lambda, \theta)\DT{\theta}-\theta g''_{\theta\lambda} \DT{\lambda}$. Using also $g''_{\theta\lambda}=\ccoupl$, we may reformulate the entropy equation \eqref{EntropyEquation} as {\it the heat equation}
\begin{equation}
c_\mathrm{v}(\theta) \DT{\theta}-\diver(\mathbb{K}(\lambda,\theta)\nabla \theta) =\delta_S^*(\DT{\lambda}) + \epsilon|\DT{\lambda}|^q+\ccoupl{\cdot}\theta \DT{\lambda}
\quad\text{ with }\
c_\mathrm{v}(\theta)=-\theta g ''_\theta(\theta),
\label{heatEquation0}
\end{equation}
where $c_\mathrm{v}$ is the specific \emph{heat capacity}.

Altogether, we can formulate our problem as
\begin{subequations} \label{ContinuousSystem}
\begin{align}
\label{ContinuousSystem-min}
&\!\!\!\!\!\!
\left.\begin{array}{llr}
\mbox{minimize} &
\displaystyle{
\int_\O\!\!\Big(\phi\bulet \nu {+} (\theta(t)){-}\theta_{\rm c}) \ccoupl{\cdot}\lambda(t){-}\psi_0(\theta(t)) + \frac{1}{2}
m{\cdot}h_{\rm dem}} \\& \qquad - h(t){\cdot}m
+\frac\varkappa2\big|\nabla\Delta^{-1}(\lambda(t){-}L\bulet\nu)\big|^2
\Big)\,\d x\!\!\!\!\!\!
\\[2mm]
\mbox{subject to}\!& m=\id \bulet\nu,\ \ \ \ h_{\rm dem}=\nabla u_{m}
\hspace{2.4em}\mbox{ on } \O,\\[2mm]
&
\displaystyle{\mbox{div} \big(\mu_0\nabla u_m-\chi_\O m \big)=0}
\hspace{3.4em}\mbox{ on } \R^d,\\[2mm] &
\nu\!\in\! {\mathscr Y}^p(\O;\R^d),\ m\!\in\! L^p(\O;\R^d),
\ u_m\!\in\! H^1(\R^d),
\end{array}\hspace*{.5em}\right\}
\text{for }t\!\in\![0,T],
\hspace*{-1em}
\\
&\partial\delta_S^*(\DT{\lambda})
+\epsilon|\DT{\lambda}|^{q-2}\DT{\lambda}+(\theta{-}\theta_{\rm c})\ccoupl\ni
\varkappa\Delta^{-1}({\rm div}\,\lambda-L\bulet \nu)
\hspace{3.2em}\text{in $Q:=[0,T]{\times}\Omega$,} \label{FlowRuleBasic}\\
&c_{\rm v}(\theta)\DT\theta-\diver(\mathbb{K}(\lambda,\theta)\nabla \theta)
= \delta_S^*(\DT{\lambda}) + \epsilon|\DT{\lambda}|^q
+\ccoupl{\cdot} \DT{\lambda} \theta
\qquad\text{in $Q$,} \label{heatEquation}\\
& \big(\mathbb{K}(\lambda,\theta)\nabla\theta\big){\cdot}n + b\theta=
{b}\theta_{\mathrm{ext}}\hspace{10.4em}
\text{ on $\Sigma:= [0,T]{\times}\Gamma$,}
\label{boundaryCondTemp}
\end{align}
\end{subequations}
where we accompanied the heat equation \eqref{heatEquation} by the Robin-type boundary conditions with $n$ denoting the outward unit normal to the boundary $\Gamma$, and with $b\in L^\infty(\Gamma)$ a phenomenological heat-transfer coefficient and $\theta_{\mathrm{ext}}$ an external temperature, both assumed non-negative.

Next we shall transform \eqref{heatEquation} by a so-called \emph{enthalpy transformation}, which simplifies the analysis below. For this, let us introduce a new variable $w$, called \emph{enthalpy}, by
\begin{equation}
w=\widehat c_\mathrm {v}(\theta)=\int_0^\theta c_\mathrm{v}(r) \dd r,
\label{IntroduceEnthalpy}
\end{equation}
It is natural to assume $c_\mathrm{v}$ positive, hence $\widehat c_{\rm v}$ is, for $w \geq 0$ increasing and thus invertible. Therefore, denote
 $$
\mathcalI(w):=\begin{cases}
\widehat c_{\rm v}^{-1}(w) & \text{if $w \geq 0$} \\
0 & \text{if $w < 0$}
\end{cases}
$$ and note that, in the physically relevant case when $\theta \geq 0$, $\theta = \mathcalI(w)$. Thus writing the heat flux in terms of $w$ gives
\begin{align}\label{flux-in-w}
\mathbb{K}(\lambda,\theta)\nabla\theta =\mathbb{K}\big(\lambda,\mathcalI(w)\big)\nabla\mathcalI(w)
=\mathcal{K}(\lambda,w)\nabla w \quad\text{ where }\ \mathcal{K}(\lambda,w):=
\frac{\mathbb{K}(\lambda, \mathcalI(w))}{c_\mathrm{v}(\mathcalI(w))}.
\end{align}
Moreover, the terms $(\mathcalI(w(t)){-}\theta_{\rm c}) \ccoupl{\cdot}\lambda(t)$ and $\psi_0(\theta(t))$ obviously do not play any role in the minimization \eqref{ContinuousSystem-min} and can be omitted. 
Thus we may rewrite \eqref{ContinuousSystem-min} in terms of $w$ as follows:
\begin{subequations} \label{ContinuousSystem1}
\begin{align}\label{ContinuousSystem1+}
&\!\!\!\!\!\!
\left.\begin{array}{llr}
\mbox{minimize} &
\displaystyle{
\int_\O\!\! \Big(\phi\bulet \nu +\frac{1}{2}
m{\cdot}h_{\rm dem}}- h(t){\cdot}m
+\frac\varkappa2\big|\nabla\Delta^{-1}(\lambda(t){-}L\bulet\nu)\big|^2
\Big) \,\d x\!\!\!\!\!\!
\\[2mm]
\mbox{subject to}\!& m=\id \bulet\nu,\ \ \ \ h_{\rm dem}=\nabla u_{m}
\hspace{2.4em}\mbox{ on } \O,\\[2mm]
&
\displaystyle{\mbox{div} \big(\mu_0\nabla u_m-\chi_\O m \big)=0}
\hspace{3.4em}\mbox{ on } \R^d,\\[2mm] &
\nu\!\in\! {\mathscr Y}^p(\O;\R^d),\ m\!\in\! L^p(\O;\R^d),
\ u_m\!\in\! H^1(\R^d),
\end{array}\hspace*{.5em}\right\}
\text{for }t\!\in\![0,T],
\hspace*{-1em}
\\
&\partial\delta_S^*(\DT{\lambda})
+\epsilon|\DT{\lambda}|^{q-2}\DT{\lambda}
+\big(\mathcalI(w){-}\theta_{\rm c}\big)\ccoupl\ni
\varkappa\Delta^{-1}(\lambda{-}L\bulet\nu)
\hspace{8.6em}\text{in $Q$,}
\label{FlowRuleBasic1}
\\
&\DT{w} - \diver(\mathcal{K}(\lambda, w) \nabla w)
=\delta_S^*(\DT{\lambda}) + \epsilon|\DT{\lambda}|^q
+\ccoupl \cdot \mathcalI(w)\DT{\lambda}
\hspace{9.6em}\text{in $Q$,} \label{EnthalpyEquation}
\\
&
\big(\mathcal K(\lambda,w)\nabla w\big){\cdot}n
+b\mathcalI(w)=b\theta_{\mathrm{ext}}
\hspace{16.6em}\text{ on $\Sigma$}.
\label{boundaryCondEnthalp}
\end{align}
\end{subequations}
Eventually, we complete this transformed system by the initial conditions
\begin{align}
\nu(0,\cdot)=\nu_0,\qquad\lambda(0,\cdot)=\lambda_0,\qquad
w(0,\cdot)=w_0:=\widehat c_{\rm v}(\theta_0) \qquad\ \text{ on }\ \Omega,
\label{initCond}
\end{align}
where $(\nu_0,\lambda_0)$ is the initial microstructure assumed to solve \eqref{ContinuousSystem1+}
and the phase field, and $\theta_0$ is the initial temperature. Note also that,
by prescribing $\nu_0$, we also prescribe
the initial magnetization $m_0=\id \bulet\nu_0$ and
magnetic potential $u_{m_0}$.

{
\subsection{Justification of the penalization concept}
Recall that in the model \eqref{ContinuousSystem} we gave up the constraint $\lambda = L \bulet \nu$ and only included a penalization term $\frac \varkappa2 \|\lambda - L \bulet \nu\|_{H^{-1}(\Omega; \R^{d+1})}$ in the Gibbs free energy. To justify this approach, we show that in some particular situations, namely in the static case and also in the iso-thermal rate-independent (with a small modification) case, solutions of the penalized model converge to solutions of the original model that satisfy $\lambda = L \bulet \nu$ as $\varkappa \to \infty$. We shall also give some heuristic ideas, why a similar limit passage should be possible even in \eqref{ContinuousSystem}, however a rigorous proof is beyond the scope of this paper.

\noindent \emph{The static case:}

Let us consider an analogical problem to \eqref{relaxedmicromagnetics} that includes also a penalization term and where we also use the form of the Gibbs free energy as in \eqref{gibbs} with $\theta$ given, i.e.
\begin{align}\label{PenalizedStatic}
\hspace{-1cm}\left.\begin{array}{llr}
\mbox{minimize} &
\displaystyle \int_\O \!\!\Big(\phi\bulet \nu+ (\theta{-}\theta_{\rm c}) \ccoupl{\cdot}\lambda-\psi_0(\theta)+\frac{1}{2}m{\cdot}\nabla u_m{-} h(t){\cdot}m {+}\frac{\varkappa}{2}|\nabla \Delta^{-1}(\lambda-L\bulet \nu)|^2\big)\,\d x\\[2mm]
\mbox{subject to} & \displaystyle{\mbox{div} \big(\mu_0\nabla u_m-\chi_\O m \big)=0} \ \ \ \mbox{ on } \R^d,\\[2mm] & m=\id \bulet\nu\hspace{7.4em}\mbox{ on } \O,\\[2mm]
& \nu\!\in\! {\mathscr Y}^p(\O;\R^d),\ \ \ \lambda \in H^{-1}(\Omega; \R^{d+1}),\ \ \ m\!\in\! L^p(\O;\R^d), \ \ \ u_m\!\in\! H^1(\R^d).
\end{array}\hspace{-0.3cm}\right\}\hspace*{-0.6cm}
\hspace*{-.2cm}
\end{align} 

Let us denote $(\lambda_\varkappa, \nu_\varkappa) \in  H^{-1}(\Omega; \R^{d+1})\times {\mathscr Y}^p(\O;\R^d)$ the solutions to \eqref{PenalizedStatic}. Let us then show that they (in terms of a subsequence) converge weakly* in $H^{-1}(\Omega; \R^{d+1})\times{\mathscr Y}^p(\O;\R^d)$ to the solutions of 
\begin{align}\label{PenalizedStaticConverges}
\hspace{-1.2cm}\left.\begin{array}{llr}
\mbox{minimize} &
\displaystyle \int_\O \Big(\phi\bulet \nu+ (\theta{-}\theta_{\rm c}) \ccoupl{\cdot}\lambda-\psi_0(\theta)+\frac{1}{2}m{\cdot}\nabla u_m- h(t){\cdot}m\Big)\,\d x\ \ \ \\[2mm]
\mbox{subject to} & \displaystyle{\mbox{div} \big(\mu_0\nabla u_m-\chi_\O m \big)=0} \ \ \ \mbox{ on } \R^d,\\[2mm] & m=\id \bulet\nu, \, \lambda = L \bulet \nu\hspace{2.9em}\mbox{ on } \O,\\[2mm]
& \nu\!\in\! {\mathscr Y}^p(\O;\R^d), \ \ \ \lambda \in H^{-1}(\Omega; \R^{d+1}),\ \ \ m\!\in\! L^p(\O;\R^d), \ \ \ u_m\!\in\! H^1(\R^d).\ \ \ \
\end{array}\hspace{-0.8cm}\right\}\hspace*{-1cm}
\hspace*{-.1cm}
\end{align} 

Namely, it easy to see, from coercivity of $\phi$ and by simply testing \eqref{PenalizedStatic} by any $(\hat{\lambda}, \hat{\nu})$ such that $\hat{\lambda} = L \bulet \hat{\nu}$, that $\int_\Omega |\cdot|^p \bulet \nu_\varkappa \dd x $ is bounded uniformly with respect to $\varkappa$; here $p$ corresponds to the growth of $\phi$, cf. (\ref{ass}a). Hence also $\|L\bulet \nu_\varkappa\|_{L^2(\Omega; \R^{d+1})}$ and in turn also $\|L\bulet \nu_\varkappa\|_{H^{-1}(\Omega; \R^{d+1})}$ are uniformly bounded with respect to $\varkappa$.

Also, by the same test as above, we get that $\varkappa \|\lambda_\varkappa - L\bulet \nu_\varkappa\|_{H^{-1}(\Omega; \R^{d+1})}^2$ and thus also $\|\lambda_\varkappa\|_{H^{-1}(\Omega; \R^{d+1})}^2$ are bounded independently of $\varkappa$.

Hence, exploiting standard selection principles, we find a pair $(\nu, \lambda) \in {\mathscr Y}^p(\O;\R^d) \times H^{-1}(\Omega; \R^{d+1})$ such that (in terms of a not-relabeled subsequence) $\nu_\varkappa \stackrel{*}{\rightharpoonup} \nu$ in ${\mathscr Y}^p(\O;\R^d)$ and $\lambda_\varkappa \rightharpoonup \lambda$ in $H^{-1}(\Omega; \R^{d+1})$. Also, as $\varkappa \|\lambda_\varkappa - L\bulet \nu_\varkappa\|_{H^{-1}(\Omega; \R^{d+1})}^2$ is bounded independently of $\varkappa$, necessarily $\lambda = L \bulet \nu$ holds for the weak limits.

Then thanks to the weak-lower semi-continuity we have that
\begin{align*}
\int_\O &\Big(\phi\bulet \nu {+} (\theta{{-}}\theta_{\rm c}) \ccoupl{\cdot}\lambda{-}\psi_0(\theta){+}\frac{1}{2}m{\cdot}\nabla u_m{-} h(t){\cdot}m\big)\,\d x \\&\leq \liminf_{\varkappa \to \infty} \int_\O \Big(\phi\bulet \nu_\varkappa{+} (\theta{{-}}\theta_{\rm c}) \ccoupl{\cdot}\lambda_\varkappa{-}\psi_0(\theta){+}\frac{1}{2}m_\varkappa{\cdot}\nabla u_{m_\varkappa}{{-}} h(t){\cdot}m_\varkappa \big)\,\d x \\& \leq \liminf_{\varkappa \to \infty} \int_\O \Big(\phi\bulet \nu_\varkappa{+} (\theta{{-}}\theta_{\rm c}) \ccoupl{\cdot}\lambda_\varkappa{-}\psi_0(\theta){+}\frac{1}{2}m_\varkappa{\cdot}\nabla u_{m_\varkappa}{-} h(t){\cdot}m_\varkappa \Big)\,\d x {{+}} \frac \varkappa2 \|\lambda_\varkappa {-} L\bulet \nu_\varkappa\|_{H^{{-}1}(\Omega; \R^{d{+}1})}^2 \\
&\leq \liminf_{\varkappa \to \infty}  \int_\O \Big(\phi\bulet \hat{\nu}{+} (\theta{{-}}\theta_{\rm c}) \ccoupl{\cdot}\hat{\lambda}{-}\psi_0(\theta){+}\frac{1}{2}\hat{m}\nabla u_{\hat{m}}{-} h(t){\cdot}\hat{m}\Big)\,\d x {+} \frac \varkappa2 \|\hat{\lambda} {-} L\bulet \hat{\nu}\|_{H^{{-}1}(\Omega; \R^{d{+}1})}^2 \\ &= \int_\O \Big(\phi\bulet \hat{\nu}{+} (\theta{{-}}\theta_{\rm c}) \ccoupl{\cdot}\hat{\lambda}{-}\psi_0(\theta){+}\frac{1}{2}\hat{m}\nabla u_{\hat{m}}{-} h(t){\cdot}\hat{m}\Big)\,\d x
\end{align*}
for any $(\hat{\lambda}, \hat{\nu})$ such that $\hat{\lambda} = L \bulet \hat{\nu}$, which shows that $(\lambda, \nu)$ is a solution to \eqref{PenalizedStaticConverges}.

\noindent \emph{The rate-independent isothermal case:}

When considering the rate-independent case we formally set $\epsilon =0$ in \eqref{form-of-zeta}, i.e. assume that the dissipation potential is equal to $\delta^*_S$. Also, since now the dissipation potential yields less regularity on $\lambda$ we have to alter the specific mesoscopic Gibbs free energy and to add a regularization term $\gamma |\nabla \lambda|^2$; let us therefore denote
$$
g_{\text{\tiny RI}}(t,\nu,\lambda) = g(t, \nu, \lambda, \theta) + \gamma |\nabla \lambda|^2,
$$
again for some $\theta$ fixed. Now, as the temperature is not a variable in this context, the system of governing equations \eqref{ContinuousSystem} reduces to (with $g_{\text{\tiny RI}}(t,\nu,\lambda)$ replacing $ g(t, \nu, \lambda, \theta)$)
\begin{subequations} \label{ContinuousSystem-RI}
\begin{align}
&\!\!\!\!\!\!
\left.\begin{array}{llr}
\mbox{minimize} &
\displaystyle{
\int_\O\!\!\Big(g_{\text{\tiny RI}}(t,\nu,\lambda,)
+\frac \varkappa 2\big|\nabla\Delta^{-1}(\lambda(t){-}L\bulet\nu)\big|^2
\Big)\,\d x}\!\!\!\!\!\!
\\[2mm]
\mbox{subject to}\!& m=\id \bulet\nu,\ \ \ \ 
\hspace{8.0em}\mbox{ on } \O,\\[2mm]
&
\displaystyle{\mbox{div} \big(\mu_0\nabla u_m-\chi_\O m \big)=0}
\hspace{3.4em}\mbox{ on } \R^d,\\[2mm] &
\nu\!\in\! {\mathscr Y}^p(\O;\R^d),\ m\!\in\! L^p(\O;\R^d),
\ u_m\!\in\! H^1(\R^d),
\end{array}\hspace*{.5em}\right\}
\text{for }t\!\in\![0,T],
\hspace*{-1em}
\\
&\partial\delta_S^*(\DT{\lambda})+(\theta{-}\theta_{\rm c})\ccoupl + 2\gamma \diver \nabla \lambda + 
\varkappa\Delta^{-1}({\rm div}\,(\lambda-L\bulet \nu)) \ni 0
\hspace{2em}\text{in $Q:=[0,T]{\times}\Omega$,} \
\end{align}
\end{subequations}
and we recover, apart from the penalization, a similar model as in \cite{kruzik-roubicek,kruzik-roubicek-2}. Since we are considering the rate-independent case, a suitable weak formulation of \eqref{ContinuousSystem-RI}, which in fact -  due to the convexity of the problem - is equivalent to the standard weak formulation, is the so-called energetic formulation, cf. e.g. \cite{mielke-theil-levitas}. Then we shall call $(\lambda_\varkappa, \nu_\varkappa) \in L^\infty([0,T]; W^{1,2}(\Omega;\R^{d+1})) \cap \text{BV}([0,T]; L^1(\Omega; \R^{d+1})) \times ({\mathscr Y}^p(\O;\R^d))^{[0,T]}$ an energetic solution of \eqref{ContinuousSystem-RI} if they satisfy (we included initial conditions here already)
\begin{subequations}
\begin{align}
&\int_\Omega \Big( g_{\text{\tiny RI}}(t,\nu_\varkappa(t),\lambda_\varkappa(t)) + \frac \varkappa 2\big|\nabla\Delta^{-1}(\lambda_\varkappa(t){-}L\bulet\nu_\varkappa(t))\big|^2 \Big)  \dd x \nonumber \\ &\qquad \qquad   \leq \int_\Omega \Big(g_{\text{\tiny RI}}(t,\hat{\nu},\hat{\lambda}) + \frac \varkappa 2\big|\nabla\Delta^{-1}(\hat{\lambda}{-}L\bulet\hat{\nu})\big|^2  + \delta_S^*(\hat{\lambda} - \lambda_\varkappa) \Big) \dd x \nonumber \\ & \qquad \qquad \qquad \text{for all $(\hat{\lambda}, \hat{\nu}) \in W^{1,2}(\Omega;\R^{d+1}) \times {\mathscr Y}^p(\O;\R^d)$ and all $t \in [0,T]$ ,} \label{semistab} \\
& \int_\Omega  \Big(g_{\text{\tiny RI}}(T,\nu_\varkappa(T),\lambda_\varkappa(T)) + \frac \varkappa 2\big|\nabla\Delta^{-1}(\lambda_\varkappa(T){-}L\bulet\nu_\varkappa(T))\big|^2 \Big) \dd x   + \mathrm{Var}_{\delta_{S}^*}(\lambda; 0, T)  \nonumber \\ & \qquad \qquad \leq \int_\Omega g_{\text{\tiny RI}}(0,\nu_0,\lambda_0) \dd  x + \int_0^T \! \int_\Omega [g_{\text{\tiny RI}}]'_t(s,\nu_\varkappa(s)) \dd x \dd s, \label{energy-pen}\\
& \lambda_\varkappa(0) = \lambda_0 \in W^{1,2}(\Omega;\R^{d+1}), \ \ \ \nu_\varkappa(0) = \nu_0 \in {\mathscr Y}^p(\O;\R^d), \ \ \ \lambda_0 = L \bulet \nu_0, 
\end{align}
\end{subequations}
with $\mathrm{Var}_{f}(x; 0, T)$ the space integral of the variation of $f$ between $0$ and $T$. Let us now show that energetic solutions $(\lambda_\varkappa, \nu_\varkappa)$ of \eqref{ContinuousSystem-RI}  converge in $L^\infty([0,T]; W^{1,2}(\Omega;\R^{d+1})) \cap \text{BV}([0,T]; L^1(\Omega; \R^{d+1})) \times ({\mathscr Y}^p(\O;\R^d))^{[0,T]}$ (at least in terms of a subsequence) for $\varkappa \to \infty$ to $(\lambda, \nu)$ satisfying
\begin{subequations}
\begin{align}
&\int_\Omega g_{\text{\tiny RI}}(t,\nu(t),\lambda(t)) \dd x  \leq \int_\Omega \big(g_{\text{\tiny RI}}(t,\hat{\nu},\hat{\lambda}) + \delta_S^*(\hat{\lambda} - \lambda) \big) \dd x \nonumber \\ &\text{for all $(\hat{\lambda}, \hat{\nu}) \in W^{1,2}(\Omega;\R^{d+1}) \times {\mathscr Y}^p(\O;\R^d)$ such that $\hat{\lambda} = L \bulet \hat{\nu}$ and all $t \in [0,T]$ ,} \label{semistab-lim} \\
& \int_\Omega g_{\text{\tiny RI}}(T,\nu(T),\lambda(T)) \dd x + \mathrm{Var}_{\delta_{S}^*}(\lambda; 0, T)   \leq \int_\Omega g_{\text{\tiny RI}}(0,\nu_0,\lambda_0) \dd  x + \int_Q [g_{\text{\tiny RI}}]'_t(s,\nu(s)) \dd x \dd s,  \label{energy-lim} \\
& \lambda(0) = \lambda_0 \in W^{1,2}(\Omega;\R^{d+1}), \ \ \ \nu(0) = \nu_0 \in {\mathscr Y}^p(\O;\R^d), \ \ \ \lambda_0 = L \bulet \nu_0,
\end{align}
\end{subequations}
i.e. the energetic formulation of
\begin{subequations} \label{ContinuousSystem-RI-limited}
\begin{align}
&\!\!\!\!\!\!
\left.\begin{array}{llr}
\mbox{minimize} &
\displaystyle{
\int_\O\!\!g_{\text{\tiny RI}}(t,\nu,\lambda)\,\d x}\!\!\!\!\!\!
\\[2mm]
\mbox{subject to}\!& m=\id \bulet\nu, \ \ \ \ \lambda = L \bulet \nu, 
\hspace{5.9em}\mbox{ on } \O,\\[2mm]
&
\displaystyle{\mbox{div} \big(\mu_0\nabla u_m-\chi_\O m \big)=0}
\hspace{5.6em}\mbox{ on } \R^d,\\[2mm] &
\nu\!\in\! {\mathscr Y}^p(\O;\R^d),\ m\!\in\! L^p(\O;\R^d),
\ u_m\!\in\! H^1(\R^d),
\end{array}\hspace*{.5em}\right\}
\text{for }t\!\in\![0,T],
\hspace*{-1em}
\\
&\partial\delta_S^*(\DT{\lambda})+(\theta{-}\theta_{\rm c})\ccoupl + 2\gamma \div \nabla \lambda + \partial I_{\lambda = L \bulet \nu} \ni 0
\hspace{3.2em}\text{in $Q:=[0,T]{\times}\Omega$,} \
\end{align}
\end{subequations}
where $I_{\lambda = L \bulet \nu} = 0$, if $\lambda = L \bulet \nu$, and $+\infty$ otherwise.

The conjecture follows from the abstract paper \cite{mielke-roubicek-stefanelli}; here we give a very short sketch.

By similar tests as in the static case, it can be seen that $\|\lambda_\varkappa\|_{L^\infty([0,T]; W^{1,2}(\Omega;\R^{d+1})) \cap \text{BV}([0,T]; L^1(\Omega; \R^{d+1}))}$, $\int_\Omega |{\cdot}|^p \bulet \nu_\varkappa \dd x $ and $\varkappa \|\lambda_\varkappa - L \bulet \nu_\varkappa\|_{H^{-1}(\Omega; \R^{d+1})}^2$ are bounded independently of $\varkappa$.

Hence, by a slight modification of Helly's theorem \cite{mielke-diff, mielke-roubicek-stefanelli}, there exists $\lambda \in L^\infty([0,T]; W^{1,2}(\Omega;\R^{d+1})) \cap \text{BV}([0,T]; L^1(\Omega; \R^{d+1})$ and a not-relabeled subsequence of $\varkappa$ such that $\lambda_\varkappa(t) \to \lambda(t)$ for all $t \in [0,T]$ weakly in $W^{1,2}(\Omega;\R^{d+1})$ and hence strongly in $L^1(\Omega;\R^{d+1}))$. 

Let us now fix $t \in [0,T]$. Then there exists a subsequence of $\varkappa$ (dependent on $t$) denoted $\varkappa_t$ and a $\nu \in {\mathscr Y}^p(\O;\R^d))$ such that $\nu_{\varkappa_t} \to \nu$ weakly* in ${\mathscr Y}^p(\O;\R^d)$. Similarly as in the static case, using \eqref{semistab} and weak lower semi-continuity, we get that
\begin{align*}
\int_\Omega g_{\text{\tiny RI}}(t,\nu(t),\lambda(t)) \dd x \leq \liminf_{\varkappa_t \to \infty} \int_\Omega g_{\text{\tiny RI}}(t,\nu_{\varkappa_t}(t),\lambda_{\varkappa_t}(t)) \dd x + \frac {\varkappa_t}{2} \|\lambda_{\varkappa_t}-L \bulet \nu_{\varkappa_t}\|_{H^{-1}(\Omega; \R^{d+1})}^2 \\ \leq \liminf_{\varkappa_t \to \infty} \int_\Omega \Big(g_{\text{\tiny RI}}(t,\hat{\nu},\hat{\lambda}) + \delta_S^*(\hat{\lambda} - \lambda_{\varkappa(t)}(t)) \Big)\dd x = \int_\Omega \Big(g_{\text{\tiny RI}}(t,\hat{\nu},\hat{\lambda}) + \delta_S^*(\hat{\lambda} - \lambda(t))\Big)\dd x,
\end{align*}
for all $(\hat{\lambda}, \hat{\nu}) \in W^{1,2}(\Omega;\R^{d+1}) \times {\mathscr Y}^p(\O;\R^d)$ such that $\hat{\lambda} = L \bulet \hat{\nu}$, i.e. we showed that $(\lambda, \nu)$ fulfills \eqref{semistab-lim}. In the last line we exploited that $\delta_S^*$ is one-homogeneous and the \emph{strong convergence} $\lambda_\varkappa(t) \to \lambda(t)$ in $L^1(\Omega;\R^{d+1}))$.

To see that $(\lambda, \nu)$ also fulfills $\eqref{energy-lim}$ we pass to the limit in \eqref{energy-pen} exploiting only weak lower semi-continuity.

\noindent \emph{The thermally coupled case as exposed in \eqref{ContinuousSystem1}:}

As already mentioned, we only give a short heuristic sketch why the penalty approach is also justified in the case presented here, in particular we concentrate only on the limit passage in the minimization principle \eqref{ContinuousSystem1+} since this seems to be the most involved one. Let us for simplicity assume that $q = 2$.

Assume that $(\lambda_\varkappa, \nu_\varkappa) \in W^{1,2}([0,T]; L^2(\Omega; \R^{d+1})) \times ({\mathscr Y}^p(\O;\R^d))^{[0,T]}$ (together with some $w_\varkappa \in L^1([0,T]; W^{1,1}(\Omega))$, which is however irrelevant here) are weak solutions of \eqref{ContinuousSystem1} with an initial condition satisfying $\lambda_0 = L \bulet \nu_0$. Then \eqref{ContinuousSystem1+} yields, just by the chain rule, that
\begin{align*}
\int_\Omega \Big(\phi\bulet \nu_\varkappa(t) +&\frac{1}{2}
m_\varkappa(t){{\cdot}}\nabla u_{m_\varkappa}(t)- h(t){{\cdot}}m_\varkappa(t)
+\frac\varkappa2\big|\nabla\Delta^{-1}(\lambda_\varkappa(t){-}L\bulet\nu_\varkappa)\big|^2 \Big) \dd x\\ &= \int_\Omega  \Big( \phi\bulet \nu_0 +\frac{1}{2}
m_0(t){{\cdot}}\nabla u_{m_0}(t)- h(t){{\cdot}}m_0(t) \Big) \dd x \\& \qquad + \int_0^t \!\int_\Omega \Big(\varkappa \big(\nabla\Delta^{-1}(\lambda_\varkappa(s){-}L\bulet\nu_\varkappa(s))\big) {\cdot} \big(\nabla\Delta^{-1} \DT{\lambda}_\varkappa(s)\big) - \DT{h}(s){\cdot} m(s) \Big) \dd x \dd s 
\end{align*}
for any $t \in [0,T]$. Combining this with the flow rule tested by $\DT{\lambda}_\varkappa$ gives that $\sup_{t \in [0,T]} \int_\Omega |\cdot|^p \bulet \nu_\varkappa$ is bounded independently of $\varkappa $ and, moreover, that $\lambda_\varkappa$ is bounded independently of $\varkappa$ in $ W^{1,2}([0,T]; L^2(\Omega; \R^{d+1}))$, too. Using the estimates for $\lambda_\varkappa$ once again in the flow-rule, we get that $\int_0^T |\varkappa \int_\Omega \nabla \Delta^{-1}(\lambda_\varkappa - L \bulet \nu_\varkappa) \nabla \Delta^{-1} v \dd x | \dd t$ is bounded for all $v \in L^2(Q; \R^{d+1})$ such that $\|v\|_{L^2(Q; \R^{d+1})} \leq 1$, in particular $\{|\varkappa \int_\Omega \nabla \Delta^{-1}( \lambda_\varkappa(t) - L \bulet \nu_\varkappa(t)) \nabla \Delta^{-1} v_{\tiny \mathrm{S}} \dd x |\}_{\varkappa > 0}$ is bounded for all $v_{\tiny \mathrm{S}} \in L^2(\Omega; \R^{d+1})$ such that $\|v_{\tiny \mathrm{S}}\|_{L^2(\Omega; \R^{d+1})} \leq 1$ and a.a. $t \in [0,T]$. This in turn means that \{$\varkappa \|\lambda_\varkappa(t) - L \bulet \nu_\varkappa(t)\|_{H^{-1}(\Omega; \R^{d+1})}\}_{\varkappa > 0}$ is bounded for almost all $t \in [0,T]$.  

Using that $W^{1,2}([0,T]; L^2(\Omega; \R^{d+1})) \subset C([0,T]; L^2(\Omega; \R^{d+1})$ one can select a subsequence of $\varkappa$ (not relabeled) and find $\lambda \in W^{1,2}([0,T]; L^2(\Omega; \R^{d+1}))$, such that $\lambda_\varkappa(t) \to \lambda(t)$ strongly in $H^{-1}(\Omega; \R^{d+1})$ for all $t \in [0,T]$. 

\emph{Let us fix some $t \in [0,T]$ such that $\{\varkappa \|\lambda_\varkappa(t) - L \bulet \nu_\varkappa(t)\|_{H^{-1}(\Omega; \R^{d+1})}\}_{\varkappa > 0}$ is bounded (note that this is possible a.a. $t \in \Omega$}). Then similarly as in the isothermal rate-independent case one can find a subsequence of $\varkappa$ (dependent on $t$) denoted $\varkappa_t$ and $\nu \in {\mathscr Y}^p(\O;\R^d)$ such that $\nu_{\varkappa_t} \to \nu$ weakly* in ${\mathscr Y}^p(\O;\R^d)$. Once again by lower semi-continuity one can get the minimization principle 
\begin{align}
\int_\O &\Big(\phi\bulet \nu \frac{1}{2}m{\cdot}\nabla u_m{-} h(t){\cdot}m \Big)\,\d x \leq \liminf_{{\varkappa_t} \to \infty} \int_\O \Big(\phi\bulet \nu_{\varkappa_t}{+} \frac{1}{2}m_{\varkappa_t}{\cdot}\nabla u_{m_{\varkappa_t}}{{-}} h(t){\cdot}m_{\varkappa_t})\,\d x \nonumber \\& \leq \liminf_{{\varkappa_t} \to \infty} \int_\O \Big(\phi\bulet \nu_{\varkappa_t} {+}\frac{1}{2}m_{\varkappa_t}{\cdot}\nabla u_{m_{\varkappa_t}}{-} h(t){\cdot}m_{\varkappa_t})\,\d x {{+}} \frac{{\varkappa_t}}{2} \|\lambda_{\varkappa_t} {-} L\bulet \nu_{\varkappa_t}\|_{H^{{-}1}(\Omega; \R^{d{+}1})}^2 \nonumber \\
&\leq \liminf_{{\varkappa_t} \to \infty}  \int_\O \Big(\phi\bulet \hat{\nu}{+} \frac{1}{2}\hat{m}\nabla u_{\hat{m}}{-} h(t){\cdot}\hat{m} \big)\,\d x {+} \frac{{\varkappa_t}}{2} \|\lambda_{\varkappa_t}(t) {-} L\bulet \hat{\nu}\|_{H^{{-}1}(\Omega; \R^{d{+}1})}^2 
\label{Conv-Full-PenI}
\end{align}
for any $\hat{\nu} \in {\mathscr Y}^p(\O;\R^d)$. It would seem logical to take $\hat{\nu}$ such that $\lambda(t) = L \bullet \hat{\nu}$, then the penalization term on the right-hand side would become $\varkappa\|\lambda_\varkappa(t) - \lambda(t)\|^2_{H^{-1}(\Omega; \R^{d+1})}$. However, although we know that $\|\lambda_\varkappa(t) - \lambda(t)\|^2_{H^{-1}(\Omega; \R^{d+1})}$ converges to $0$ as $\varkappa \to \infty$ this no longer needs to hold if the term is multiplied by $\varkappa$. 

The limit passage in the minimization principle in the thermally coupled rate-dependent case therefore seems to be much more involved than in the cases presented before. This is mainly due to the fact that now the evolution of $\lambda$ is given by a completely separate equation. However, we can use a trick to circumpass this problem. Namely we realize that, \emph{due to the convexity}, any solution of \eqref{ContinuousSystem1+} $\nu_\varkappa$, solves also the following problem (for the fixed $t$)
\begin{equation}
\!\!\!\!\!\!
\begin{array}{llr}
\mbox{minimize} &
\displaystyle{
\int_\O\!\! \Big(\phi\bulet \nu +\frac{1}{2}
m{\cdot}\nabla u_m}- h(t){\cdot}m
+\varkappa(\nabla\Delta^{-1}(\lambda_\varkappa(t){-}L\bulet\nu_\varkappa))\cdot(\nabla\Delta^{-1}(\lambda_\varkappa(t){-}L\bulet\nu))
\Big) \,\d x\!\!\!\!\!\!
\\[2mm]
\mbox{subject to}\!& m=\id \bulet\nu,\ \ \ \ h_{\rm dem}=\nabla u_{m} \ \ \ \ \nu_\varkappa \text{ solution to \eqref{ContinuousSystem1+}}
\hspace{2.4em}\mbox{ on } \O,\\[2mm]
&
\displaystyle{\mbox{div} \big(\mu_0\nabla u_m-\chi_\O m \big)=0}
\hspace{3.4em}\mbox{ on } \R^d,\\[2mm] &
\nu\!\in\! {\mathscr Y}^p(\O;\R^d),\ m\!\in\! L^p(\O;\R^d),
\ u_m\!\in\! H^1(\R^d).
\end{array}
\label{LinProblem}
\end{equation}

Now we return to the second line \eqref{Conv-Full-PenI} (and use that ${\varkappa_t}/2 \leq {\varkappa_t}$) but instead of exploiting that $\nu_{\varkappa_t}$ solves \eqref{ContinuousSystem1+} we use that it solves \eqref{LinProblem} and get
\begin{align*}
 &\liminf_{{\varkappa_t} \to \infty} \int_\O \Big(\phi\bulet \nu_{\varkappa_t} {+}\frac{1}{2}m_{\varkappa_t}{\cdot}\nabla u_{m_{\varkappa_t}}{-} h(t){\cdot}m_{\varkappa_t})\,\d x {{+}}  {\varkappa_t} \|\lambda_{\varkappa_t} {-} L\bulet \nu_{\varkappa_t}\|_{H^{{-}1}(\Omega; \R^{d{+}1})}^2 \nonumber  \\ &= 
\liminf_{{\varkappa_t} \to \infty} \int_\O \Big(\phi\bulet \nu_{\varkappa_t} {+}\frac{1}{2}m_{\varkappa_t}{\cdot}\nabla u_{m_{\varkappa_t}}{-} h(t){\cdot}m_{\varkappa_t} {+} {\varkappa_t} \nabla \Delta^{-1}( \lambda_{\varkappa_t} {-} L\bulet \nu_{\varkappa_t})\nabla \Delta^{-1}( \lambda_{\varkappa_t} {-} L\bulet \nu_{\varkappa_t}) \Big) \dd x \nonumber \\
&\leq \liminf_{{\varkappa_t} \to \infty}  \int_\O \Big(\phi\bulet \hat{\nu}{+} \frac{1}{2}\hat{m}\nabla u_{\hat{m}}{-} h(t){\cdot}\hat{m} {+} {\varkappa_t}
\nabla \Delta^{-1}(\lambda_{\varkappa_t} {-} L\bulet \nu_{\varkappa_t})\nabla \Delta^{-1}( \lambda_{\varkappa_t} {-} L\bulet \hat{\nu}) \Big) \dd x 
\end{align*}
for all $\hat{\nu} \in {\mathscr Y}^p(\O;\R^d)$ such that $\lambda = L \bulet \hat{\nu}$. Now we exploit our special choice of $t$ for which we know that ${\varkappa_t} \|\lambda_{\varkappa_t}(t) - L \bulet \nu_{\varkappa_t}(t)\|_{H^{-1}(\Omega; \R^{d+1})}$ is bounded and $\lambda_{\varkappa_t}(t) \to \lambda(t)$ strongly in $H^{-1}(\Omega; \R^{d+1})$, which makes the penalization term vanish. Hence we establish the converged minimization principle.
}
\section{Weak formulation, qualification and main results}
% ~~~~~~~~~~~~~~~~~~~~~~~~~~~~~~~~~~~~~~~
In this section we shall give a weak formulation of the proposed micromagnetics model. The used formulation is to a great extend inspired by the \emph{energetic formulation} for rate-independent processes (see e.g.\ \cite{mielke-theil-levitas}) and its generalization given in e.g.\ \cite{tr1} for problems that include both rate-independent and rate-dependent processes.

In our case as well, we may regard the magnetic variable $\nu$ to be fast
evolving; its evolution is therefore driven by rate-independent processes. On
the other hand, the variables $\theta$ and $\lambda$ are evolving slowly as
there evolution is driven by a rate-dependent process. Hence, we demand the fast
evolving variable to satisfy a minimization principle and for the slow
variables we just require standard weak formulation of (\ref{ContinuousSystem1},b-d).

\begin{definition}[{\sc Weak solution}]
\label{DefinOfSol}
The triple $(\nu,\lambda, w){\in} ({\mathscr Y}^p(\Omega;\R^d))^{[0,T]}{\times}
W^{1,q}([0,T]; L^q(\Omega;\R^{d+1}))$ ${\times} L^1([0,T]; W^{1,1}(\Omega))$
such that $m=\mathrm{id} \bulet \nu \in L^2(Q;\R^d)$ and $L\bulet\nu\in L^{2}(Q;\R^{d+1})$ is called a weak solution to \eqref{ContinuousSystem1}
if it satisfies:
\begin{enumerate}
\item The {\bf minimization principle}: For all $\tilde \nu$ in ${\mathscr Y}^{p}(\O;\R^d)$ and all $t \in [0,T]$
\begin{equation}
\hspace{-2em}
\Gm(t,\nu,\lambda,\mathcalI(w))\le\Gm(t,\tilde{\nu},\lambda,\mathcalI(w)).
\label{balanceLaw}
\end{equation}
\item The {\bf reduced Maxwell system for magnetostatics}:
For a.a. $t \in [0,T]$ and all $v \in H^1(\R^d)$
\begin{equation}
\hspace{-2em}
\mu_0 \int_{\R^d} \nabla u_{m} \cdot \nabla v\,\dd x=
\int_{\Omega} m \cdot \nabla v\,\dd x.
\label{maxwellSysWeak}
\end{equation}
\item The {\bf flow rule}: For any $v \in L^q(Q; \R^{d+1})$
\begin{align}\nonumber
\hspace{-2em}
\int_Q\!\!\ \left( \big(\mathcalI(w){-}\theta_{\rm c}\big)\ccoupl{\cdot}
\big(v{-}\DT{\lambda}\big)+\delta_S^*(v)+\frac{\epsilon}{q} |v|^q
+\varkappa\nabla\Delta^{-1}(\lambda{-}L\bulet\nu){\cdot}
\nabla\Delta^{-1}(v{-}\DT{\lambda})
\right) \,\dd x\dd t
\\ \geq \int_{Q} \Big(\delta_S^*(\DT{\lambda})+\frac\epsilon q|\DT{\lambda}|^q \Big)\,\dd x\dd t.
\label{FlowRule}
\end{align}
\item The {\bf enthalpy equation}: For any $\varphi \in C^1(\bar{Q}), \, \varphi(T)=0$
\begin{align}
\hspace{-4em}
\int_{Q}\Big(\mathcal{K}(\lambda,w)\nabla w{\cdot}\nabla\varphi-w \DT{\varphi} \Big)\,\dd x \dd t + \int_{\Sigma} b\mathcalI(w)\varphi\,\dd S \dd t \nonumber
= \int_\Omega
w_0 \varphi(0) \,\dd x \\ + \int_{Q}\!\Big( \delta_S^*(\DT{\lambda})+\epsilon|\DT{\lambda}|^q+ \mathcalI(w)\ccoupl{\cdot}\DT{\lambda}\Big)\varphi \,\dd x \dd t + \int_{\Sigma} b\theta_{\mathrm{ext}}\varphi \,\dd S \dd t.
\label{EnthalpyEq}
\end{align}
\item The {\bf remaining initial conditions}
in \eqref{initCond}: $\nu(0,\cdot)=\nu_0$ and $\lambda(0,\cdot)=\lambda_0$.
\end{enumerate}
\end{definition}

\noindent {\bf Data qualifications:}\\
Let us now summarize the data qualification, needed to prove the existence of
weak solutions:
\begin{subequations}\label{ass}
\begin{align}
\intertext{Isothermal part of the anisotropy energy: $\phi \in C(\R^d)$ and}
&\exists c^A_1,c^A_2 >0,\ p>4:\ \ c^A_1(1+|\cdot|^p) \leq \phi(\cdot) \leq c^A_2(1+|\cdot|^p), \label{pGrowth}
\intertext{Rate-independent dissipation: $\delta_S^* \in C(\R^{d+1})$ positively homogeneous, and}
&\exists c_{1,D},c_{2,D} >0:\ \ c_{1,D}(|\cdot|) \leq \delta_S^*(\cdot) \leq c_{2,D}(|\cdot|), \label{GrowthDissip}
\intertext{External magnetic field:}
&h \in C^1([0,T]; L^2(\Omega; \R^d)),\label{ass-f}
\intertext{Specific heat capacity: $c_\mathrm{v} \in C(\R)$ and,
with $q$ from \eqref{form-of-zeta},}
&\exists c_{1,\theta},c_{2,\theta}>0,\ \omega_1 \geq \omega \geq q',\
c_{1,\theta} (1{+}\theta)^{\omega-1} \leq c_\mathrm{v}(\theta) \leq c_{2,\theta}(1{+}\theta)^{\omega_1-1},
\label{OmegaGrowth}
\intertext{Heat conduction tensor: $\mathcal{K}\in C(\R^{d+1}\times\R;\R^{d \times d})$ and }
&\exists C_K,\kappa_0 >0\ \forall\chi\in\R^d: \mathcal{K}(\cdot , \cdot) \leq C_K, \, \, \, \, \chi^\mathrm{T}\mathcal{K}(\cdot, \cdot) \chi \geq \kappa_0 |\chi|^2,
\label{Sub6Growth}
\intertext{External temperature:}
&\theta_\mathrm{ext} \in L^1(\Sigma),\ \ \theta_\mathrm{ext}\ge0,
\ \text{ and }\ b \in L^\infty(\Sigma),\ \ b\ge0,
\label{ExternalTemp}
\intertext{Initial conditions:}
&\nu_0\in{\mathscr Y}^p(\Omega;\R^d) \text{ solving \eqref{balanceLaw} },\quad \lambda_0\in L^q(\O;\R^{d+1}),
\quad w_0=\widehat c_\mathrm{v}(\theta_0)\in L^1(\O)
\text{ with }\theta_0\ge0. \label{ass-IC}
\end{align}
\end{subequations}
Note that \eqref{IntroduceEnthalpy} combined with \eqref{OmegaGrowth} yields for non-negative $w$
\begin{equation}
w=\widehat c_\mathrm{v}(\theta)=\int_0^\theta\!\!c_\mathrm{v}(r) \dd r \geq c_{1,\theta} \int_0^\theta\!(1{+}r)^{\omega-1} \dd r \geq c_{1,\theta} \!\big( (1{+}\theta)^\omega\!-1\big) =c_{1,\theta}\big((1{+}\mathcalI(w))^\omega\!-1\big).
\label{OmegaGrowthBound}
\end{equation}

The main analytical result we will prove in the following sections is:

\begin{theorem}\label{thm-main}
Let \eqref{ass} hold. Then at least one weak solution $(\nu,\lambda, w)$
to the problem \eqref{ContinuousSystem1}
in accord with Definition~\ref{DefinOfSol} does exist. Moreover, some of
these solutions satisfies also
\begin{align}\label{estimates-of-w}
&w\in L^r([0,T]; W^{1,r}(\Omega))\,\cap\,W^{1,1}(I;W^{1,\infty}(\Omega)^*)
\qquad\text{ with }1\le r<\frac{d{+}2}{d{+}1}.
\end{align}
\end{theorem}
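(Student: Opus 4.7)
The plan is to follow the strategy announced in the abstract: construct approximate solutions by backward-Euler time discretization, derive a priori estimates uniform in the time step, and pass to the limit using Young-measure compactness for $\nu$, weak compactness in $W^{1,q}$ for $\lambda$, and Boccardo--Gallou\"et type estimates for the enthalpy $w$.

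\emph{Discretization and discrete existence.} Fix $\tau=T/N$ with $N\in\N$ and iteratively construct $(\nu_\tau^k,\lambda_\tau^k,w_\tau^k)$ by a fractional-step scheme. Given $(\lambda_\tau^{k-1},w_\tau^{k-1})$, first obtain $(\nu_\tau^k,\lambda_\tau^k)$ as a minimizer of
$$
(\nu,\lambda)\mapsto\Gm\big(t_k,\nu,\lambda,\mathcalI(w_\tau^{k-1})\big)+\tau\,\zeta\big((\lambda{-}\lambda_\tau^{k-1})/\tau\big)
$$
over ${\mathscr Y}^p(\Omega;\R^d)\times L^q(\Omega;\R^{d+1})$; existence follows from the $p$-coercivity of $\phi$ \eqref{pGrowth}, the $q$-coercivity in $\lambda$ coming from $\zeta$ \eqref{form-of-zeta}, and weak*/weak lower semicontinuity (the magnetostatic contribution is convex and continuous in $m=\id\bulet\nu$ via \eqref{maxwellSysWeak}). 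Then solve the backward-Euler step of the enthalpy equation for $w_\tau^k\in W^{1,2}(\Omega)$: this is a nonlinear elliptic problem with a Leray--Lions principal part and $L^1$ right-hand side, solvable by truncation together with a Schauder fixed-point on the $\mathcalI(w)$ appearing in the source.

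\emph{A priori estimates.} Testing the discrete flow rule by the discrete time derivative of $\lambda_\tau^k$ and combining with the optimality of the discrete minimization yields, after telescoping, uniform bounds on $\sup_k\int_\Omega|\,\cdot\,|^p\bulet\nu_\tau^k$, on $\|\lambda_\tau\|_{L^\infty([0,T];L^q)\cap W^{1,q}([0,T];L^q)}$, and on the total rate-independent dissipation. Testing the enthalpy equation by $1$ yields an $L^\infty([0,T];L^1(\Omega))$ bound on $w_\tau$; here the condition $\omega\ge q'$ in \eqref{OmegaGrowth} is crucial to absorb the $\mathcalI(w)\ccoupl{\cdot}\DT\lambda$ source term via Young's inequality. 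The regularity asserted in \eqref{estimates-of-w} is produced by the Boccardo--Gallou\"et trick of testing by $1-(1+w_\tau^k)^{-\sigma}$ for small $\sigma>0$, which gives $\nabla w_\tau\in L^r(Q;\R^d)$ for every $r<(d{+}2)/(d{+}1)$; the $W^{1,\infty}(\Omega)^*$ bound on $\partial_t w_\tau$ follows by a duality comparison with the source and the flux in the discrete equation.

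\emph{Passage to the limit and main obstacle.} Define piecewise constant/affine interpolants and extract subsequences so that $\lambda_\tau\tow\lambda$ in $W^{1,q}([0,T];L^q(\Omega;\R^{d+1}))$ with strong $L^q(Q)$ convergence by Aubin--Lions; $w_\tau\to w$ strongly in $L^1(Q)$ via Aubin--Lions applied to the spaces in \eqref{estimates-of-w}; and $\nu_\tau(t)\overset{*}{\tow}\nu(t)$ in ${\mathscr Y}^p(\Omega;\R^d)$ for every $t\in[0,T]$ by a Helly-type selection justified by the monotonicity of the discrete Gibbs energy along the partition. The minimization principle \eqref{balanceLaw} passes by weak* lower semicontinuity of the convex parts of $\Gm$ and by continuity of $m\mapsto u_m$; the flow rule \eqref{FlowRule} survives because its left-hand side is convex in $v$ and in $\DT\lambda$, while the strong convergences of $\lambda_\tau$ and of $\mathcalI(w_\tau)$ handle the non-variational coupling. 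The main obstacle will be closing the enthalpy equation \eqref{EnthalpyEq}: the right-hand side is only $L^1$-bounded, so passing to the limit in the nonlinear coupling $\mathcalI(w)\ccoupl{\cdot}\DT\lambda$ demands strong convergence of $w_\tau$, which itself relies on the Boccardo--Gallou\"et estimates above. This circular dependence is what forces the delicate compatibility $\omega\ge q'$ in \eqref{OmegaGrowth} and is the technical heart of the proof; once it is established, equi-integrability of $\mathcalI(w_\tau)\DT\lambda_\tau$ gives weak $L^1$ convergence, sufficient to identify the limit enthalpy equation against any test function $\varphi\in C^1(\bar Q)$ with $\varphi(T)=0$.
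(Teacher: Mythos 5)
Your high-level architecture (backward-Euler discretization, energy/flow-rule tests, Boccardo--Gallou\"et for $\nabla w_\tau$, Aubin--Lions, then limit passage) matches the paper's scenario, but there are two genuine gaps where your plan would not close.

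First, and most importantly, you do not establish the \emph{strong} convergence $\DT\lambda_\tau\to\DT\lambda$ in $L^q(Q;\R^{d+1})$. Aubin--Lions (applied to the interpolants of $\lambda_\tau$) gives strong $L^q(Q)$ convergence of $\lambda_\tau$ itself, but says nothing about $\DT\lambda_\tau$, for which you only get weak $L^q(Q)$ convergence from the a priori bound. Yet the right-hand side of the enthalpy equation contains the nonlinear dissipation terms $\delta_S^*(\DT\lambda_\tau)+\epsilon|\DT\lambda_\tau|^q$, and weak convergence combined with convexity only gives $\liminf$-type inequalities, not equality in the limit; you would therefore identify a \emph{sub}solution of the heat equation, not a weak solution. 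The paper devotes a separate Minty/Browder-type step (testing the discrete flow rule against a piecewise-constant recovery sequence $\DT\lambda_{\mathrm{S},\tau}\to\DT\lambda$ and, symmetrically, the limit flow rule against $\DT\lambda_\tau$, then exploiting uniform convexity of $|\cdot|^q$) to upgrade weak to strong convergence of $\DT\lambda_\tau$ in $L^q(Q)$; without such an argument the enthalpy equation cannot be closed. Your remark about equi-integrability handles only the lower-order coupling $\mathcalI(w)\ccoupl\cdot\DT\lambda$, not the $q$-power term.

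Second, your claim that ``$\nu_\tau(t)\overset{*}{\rightharpoonup}\nu(t)$ for every $t$ by a Helly-type selection'' is not justified: ${\mathscr Y}^p(\Omega;\R^d)$ is not a dual Banach space for which a Helly selection principle applies and there is no BV-in-time structure for $\nu_\tau$. In fact the paper explicitly notes (Remark~4.4) that the limiting Young measure $\nu$ need \emph{not} be measurable as a function of $t$; the convergent subsequence is chosen \emph{pointwise in $t$} (so $\tau_{k(t)}$ depends on $t$), and measurability of the \emph{moments} $m=\mathrm{id}\bulet\nu$ and $L\bulet\nu$ is recovered separately by exploiting the strict convexity of the magnetic Gibbs functional $\mathfrak{G}$ in $\nabla u_m$ and in $\lambda-L\bulet\nu$, which makes those moments unique even though $\nu$ itself is not. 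Your proposal skips this entirely, and without it $m$ and $L\bulet\nu$ are not even Lebesgue measurable functions of time, so the flow rule and the enthalpy equation do not make sense. A smaller point: the paper adds a $\tau|\lambda|^{2q}$ regularization to the time-incremental minimization so that the discrete enthalpy source lands in $L^2(\Omega)$ and the elliptic step can be handled by pseudo-monotone operator theory with standard $H^1(\Omega)$ test functions (in particular allowing $[w_\tau^k]^-$ as a test for nonnegativity), rather than your proposed truncation/Schauder approach to the $L^1$-data elliptic problem; your route is not obviously wrong but you would still need to control the passage $\tau\to0$ through these entropy-type solutions, which the paper avoids.
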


\begin{proof}[Scenario of the proof]
The above assertion will immediately follow from the 
Proposition~\ref{MainConvProp}, which assures convergence
of the approximate solutions constructed by semi-implicit time discretization in Section~\ref{sect-disc} to the weak solutions of \eqref{ContinuousSystem1},   when proving a-priori estimates in Proposition~\ref{aprioriII} and when realizing that the approximations $w_{0,\tau}$ and
$\lambda_{0,\tau}$ of $w_0$ and $\lambda_0$ required in \eqref{lambdaZeroApprox} below always exist.
The information $\frac{\partial w}{\partial t}\in L^1(I;W^{1,\infty}(\Omega)^*)$ can be obtained from the equation (\ref{ContinuousSystem1}c,d) itself.
\end{proof}

\section{Time discretization, a-priori estimates, and convergence}\label{sect-disc}
% ~~~~~~~~~~~~~~~~~~~~~~~~~~~~~~~~~~~~~~~~~~~~~~~~~~~~~~~~~
To prove Theorem~\ref{thm-main}, we proceed in a \emph{constructive manner} that may serve also as a \emph{conceptual numerical algorithm}, at least after a spatial discretization being performed. Namely, we discretize the time with a time-step $\tau$ and introduce a minimization problem in every time-step that we shall call the \emph{time-incremental minimization problem}. This problem represents a discrete version of the minimization principle as well as of the flow rule. Also, we apply a semi-implicit method of time-discretization in such a way that it decouples the time incremental minimization problem and the heat equation in any particular time-step $k$ by using the ``retarded''
enthalpy, i.e.\ $w_\tau^{k-1}$.

We call the triple $(\nu_\tau^k, \lambda_\tau ^k, w_\tau ^k) \in {\mathscr Y}^{p}(\O;\R^d) \times L^{2q}(\Omega; \R^{d+1})\times H^1(\Omega)$ the \emph{discrete weak solution} of \eqref{ContinuousSystem1} subject to boundary condition \eqref{boundaryCondEnthalp} at time-level $k$, $k = 1 \ldots T/\tau$, if it satisfies:
\begin{subequations}\label{def-disc}
\begin{enumerate}
\item The time-incemental {\bf minimization problem} with given $\lambda_\tau ^{k-1}$ and $w_\tau ^{k-1}$:
\begin{align}
\left.
\hspace{-2ex}
\begin{array}{ll}
\mathrm{Minimize}\!\!& \displaystyle{\Gm(k\tau,\nu,\lambda,\mathcalI(w_\tau^{k-1}))
+\tau\!\int_\Omega\!\left(|\lambda|^{2q}
+\delta_S^*\Big(\frac{\lambda{-}\lambda_\tau ^{k-1}}{\tau}\Big)
+\frac{\epsilon}{q}\Big|\frac{\lambda{-}\lambda_\tau ^{k-1}}{\tau}\Big|^q \right) \dd x}
\\[.7em]
\mathrm{subject \, to}\!\!&
(\nu,\lambda)\in {\mathscr Y}^{p}(\O;\R^d) \times L^{2q}(\Omega; \R^{d+1}).
\end{array}\!\right\}\!\!\!
\label{BalanceEqDis}
\end{align}
with $\Gm$ from \eqref{def-of-psi}.
\item The {\bf reduced Maxwell system for magnetostatics}:
For all $v \in H^1(\R^d)$
\begin{equation}
\int_{\R^d} \nabla u_{m_\tau^k}{\cdot}\nabla v\,\dd x = \int_{\Omega}m_\tau^k {\cdot}\nabla v\,\dd x\qquad\text{ with }\ m_\tau^k=\mathrm{id} \bulet \nu_\tau^k.
\label{MaxwellDis}
\end{equation}
\item The {\bf enthalpy equation}: For all $\varphi \in H^1(\Omega)$
\begin{align}
\int_{\Omega}\left(\frac{w_\tau^k{-}w_\tau ^{k-1}}{\tau}\varphi + \mathcal{K}(\lambda_\tau^k, w_\tau^k) \nabla w_\tau ^k{\cdot}\nabla\varphi \right)\,\dd x
+ \int_{\Gamma} b^k_\tau\mathcalI(w_\tau ^k)\varphi\,\dd S \nonumber
= \int_{\Gamma} b^k_\tau\theta^k_{\mathrm{ext}, \tau}\varphi\,\dd S
\\
+\int_{\Omega}\left(\delta_S^*
\Big(\frac{\lambda_\tau^k{-}\lambda_\tau^{k-1}}{\tau}\Big)
+\epsilon\Big|\frac{\lambda_\tau^k{-}\lambda_\tau^{k-1}}{\tau}\Big|^q
\mathcalI(w_\tau^k)\ccoupl{\cdot}
\frac{\lambda^k{-}\lambda^{k-1}}{\tau} \right)\varphi\,\dd x.
\label{EnthalpyEqDis}
\end{align}
\item For $k=0$ the {\bf initial conditions} in the following sense
\begin{align}\label{initCondDis}
\nu_\tau ^0=\nu_0,\qquad \lambda_\tau ^0=\lambda_{0,\tau},
\qquad w_\tau ^0=w_{0,\tau}\ \ \ \text{ on }\O.
\end{align}
\end{enumerate}
\end{subequations}

In \eqref{initCondDis}, we denoted by $\lambda_{0,\tau}\in L^{2q}(\O;\R^{d+1})$
and $w_{0,\tau}\in L^2(\O)$
respectively suitable approximation of the
original initial conditions $\lambda_0\in L^q(\O;\R^{d+1})$ and
$w_0\in L^1(\O)$ such that
\begin{subequations}
\begin{align}\label{lambdaZeroApprox}
&\lambda_{0,\tau}\to\lambda_0\ \text{ strongly in $L^q(\O;\R^{d+1})$, and }\
\|\lambda_{0,\tau}\|_{L^{2q}(\O;\R^{d+1})} \leq C \tau^{-1/(2q+1)},
\\
&w_{0,\tau}\to w_0\ \text{ strongly in $L^1(\O)$, and }\ w_{0,\tau}\in L^2(\O).
\end{align}
\end{subequations}
Moreover $\theta^k_{\mathrm{ext}, \tau} \in L^2(\Gamma)$ and $b^k_\tau \in L^\infty (\Gamma)$ are defined in such a way that their piecewise constant interpolants
\begin{equation*}
\big[\bar\theta_{\mathrm{ext},\tau},\bar b_\tau](t):=\big(\theta^k_{\mathrm{ext}, \tau},b^k_\tau,)
\qquad\qquad\text{ for $\ (k{-}1)\tau<t\le k\tau$,\ \ $k=1,...,K_\tau$}.
\end{equation*}
satisfy
\begin{equation}
\bar\theta_{\mathrm{ext}, \tau} \to \theta_\mathrm{ext}\ \text{ strongly in $L^1(\Sigma)$ and }\ \bar b_\tau \stackrel{*}{\rightharpoonup} b \ \text{ weakly* in $L^\infty(\Sigma)$.}
\label{ConvTempData}
\end{equation}

To see why \eqref{def-disc}
indeed forms a correct time-discretization of the weak formulation of \eqref{DefinOfSol}, note that \eqref{BalanceEqDis} already contains the flow rule since, if we can find a minimizer, the first order optimality condition in $\lambda$ evaluated at $\lambda_\tau ^k$ yields the discrete version of \eqref{FlowRule}.

Realize also, that we have added the regularization term
$\tau |\lambda|^{2q}$ to the time-incremental problem, cf.\
\eqref{BalanceEqDis}. This shall assure
that even
$|\frac{\lambda^k{-}\lambda^{k-1}}{\tau}|^q \in L^2(\Omega)$ (although
it does not hold uniformly for $\tau\to 0$)
and in turn also the existence of solutions of the enthalpy equation
in the classical weak sense.
Of course, this regularization term will be shown to
converge to 0 as we pass to the limit $\tau \to 0$.

\begin{proposition}[Existence of discrete solutions]
\label{ExistenceDiscSol} Let \eqref{ass} hold and let also $w_\tau ^0\geq 0$.
Then there exists a discrete weak solution according to \eqref{def-disc}
such that $w_\tau ^k \geq 0$ for all $k=1 \ldots T/\tau$.
\end{proposition}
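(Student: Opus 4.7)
The plan is to argue by induction on $k$, assuming at the beginning of the step that $w_\tau^{k-1}\ge 0$ (true for $k=1$ since $\theta_0\ge0$ and $w_{0,\tau}\in L^2(\O)$) and $\lambda_\tau^{k-1}\in L^{2q}(\O;\R^{d+1})$. Since only the \emph{retarded} enthalpy $w_\tau^{k-1}$ enters $\Gm$ in \eqref{BalanceEqDis}, the scheme decouples into two independent subproblems at each step: (i) the minimization problem \eqref{BalanceEqDis} giving $(\nu_\tau^k,\lambda_\tau^k)$, and (ii) the enthalpy equation \eqref{EnthalpyEqDis} for $w_\tau^k$ with $\lambda_\tau^k$ frozen as data. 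The reduced Maxwell system \eqref{MaxwellDis} is the Euler--Lagrange equation for $u_{m_\tau^k}$ inside the minimization and is uniquely solvable in $H^1(\R^d)$ (modulo constants) by Lax--Milgram once $m_\tau^k=\id\bulet\nu_\tau^k$ is known.

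For subproblem~(i), I would apply the direct method in ${\mathscr Y}^p(\O;\R^d)\times L^{2q}(\O;\R^{d+1})$. Coercivity of the functional stems from the $p$-growth \eqref{pGrowth} of $\f$ (which controls $\int_\O|\cdot|^p\bulet\nu\,\dd x$), the regularization $\tau\int_\O|\lambda|^{2q}\dd x$, and non-negativity of the magnetostatic, penalization, and dissipation terms; the linear contributions $-h(t){\cdot}m$ and $(\mathcalI(w_\tau^{k-1}){-}\theta_{\rm c})\ccoupl{\cdot}\lambda$ are bounded from below by Young's inequality, using \eqref{ass-f} and the bound $\mathcalI(w_\tau^{k-1})\le C(1+w_\tau^{k-1})^{1/\o}$ from \eqref{OmegaGrowthBound}. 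Weak$^*$ lower semicontinuity is standard: $\nu\mapsto\int_\O\f\bulet\nu\,\dd x$ is weak$^*$ lsc on Young measures since $\f$ is continuous and non-negative; the magnetostatic contribution equals $\tfrac{\mu_0}{2}\int_{\R^d}|\nabla u_m|^2\,\dd x$ and is convex-continuous in $m$; the $H^{-1}$-penalization is convex-continuous thanks to the smoothing $\nabla\Delta^{-1}$; and the terms linear in $m$ and $\lambda$ are weakly continuous. The first-order optimality condition in $\lambda$ inside \eqref{BalanceEqDis} produces precisely the discrete flow rule.

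For subproblem~(ii), the equation for $w\in H^1(\O)$ is quasilinear, with nonlinearities in both the diffusion $\mathcal{K}(\lambda_\tau^k,w)$ and the boundary term $b_\tau^k\mathcalI(w)$, as well as a $w$-dependent source $\mathcalI(w)\ccoupl{\cdot}(\lambda_\tau^k{-}\lambda_\tau^{k-1})/\tau$. I would solve it by Schauder's fixed-point theorem: given $\tilde w\in L^2(\O)$, freeze the nonlinearities as $\mathcal{K}(\lambda_\tau^k,\tilde w)$ and $\mathcalI(\tilde w)$, obtain a linear coercive problem by \eqref{Sub6Growth} together with the $1/\tau$-mass term, and solve it uniquely by Lax--Milgram in $H^1(\O)$. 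The right-hand side lies in $L^2(\O)$ because the regularization $\tau|\lambda|^{2q}$ in \eqref{BalanceEqDis} ensures that $(\lambda_\tau^k{-}\lambda_\tau^{k-1})/\tau\in L^{2q}(\O;\R^{d+1})$, so that $|{\cdot}|^q$ of it lies in $L^2$, while $\mathcalI(\tilde w)$ is controlled in $L^\o\subset L^{q'}$ by \eqref{OmegaGrowthBound}. Testing with $w$ itself gives an a-priori $H^1(\O)$ bound independent of $\tilde w$, so the fixed-point map sends a large ball of $L^2(\O)$ into itself; the compactness is provided by $H^1(\O)\Subset L^2(\O)$, and continuity follows from continuity of $\mathcal{K}$ and $\mathcalI$. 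This yields the desired $w_\tau^k$.

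To conclude the induction it remains to show $w_\tau^k\ge 0$, which I would obtain by testing \eqref{EnthalpyEqDis} with $\varphi=-(w_\tau^k)^-\le 0$. The crucial observation is that $\mathcalI$ was defined so that $\mathcalI(w)=0$ whenever $w<0$; hence both the boundary term $b_\tau^k\mathcalI(w_\tau^k)\varphi$ and the source term $\mathcalI(w_\tau^k)\ccoupl{\cdot}(\lambda_\tau^k{-}\lambda_\tau^{k-1})\varphi/\tau$ contribute zero on $\{w_\tau^k<0\}$, while the mass term produces $\tfrac{1}{\tau}\int_\O\big((w_\tau^k)^-\big)^2\,\dd x+\tfrac{1}{\tau}\int_\O w_\tau^{k-1}(w_\tau^k)^-\,\dd x\ge 0$ thanks to the inductive hypothesis, and the diffusion term yields $\int_\O\mathcal{K}|\nabla(w_\tau^k)^-|^2\,\dd x\ge\kappa_0\|\nabla(w_\tau^k)^-\|_{L^2}^2$. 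The remaining non-negative source contributions on the right, tested against $-(w_\tau^k)^-\le 0$, are non-positive. Summing, $(w_\tau^k)^-=0$ a.e., closing the induction. The main technical obstacle is clearly subproblem~(ii): the simultaneous nonlinear dependence of $\mathcal{K}$, of the boundary term, and of the source on the unknown $w$, which is what makes the $\tau|\lambda|^{2q}$ regularization in \eqref{BalanceEqDis} essential so that the right-hand side has enough integrability for the linearized problem to be well-posed.
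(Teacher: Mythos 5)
Your overall structure matches the paper's: exploit the decoupling induced by the retarded enthalpy $w_\tau^{k-1}$, apply the direct method to the time-incremental minimization, solve the enthalpy equation by a functional-analytic method, and finally test with $\min(0,w_\tau^k)$ to obtain $w_\tau^k\ge0$. Your treatment of the minimization problem (coercivity from \eqref{pGrowth}, the $\tau|\lambda|^{2q}$ regularization and non-negativity of the remaining terms, weak$^*$ lower semicontinuity on ${\mathscr Y}^p\times L^{2q}$) and of non-negativity (the key observation that $\mathcalI(w)=0$ for $w<0$ kills the $\mathcalI$-dependent terms, and the induction hypothesis kills the $w_\tau^{k-1}$ term) are correct and essentially identical to the paper. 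Where you depart is the enthalpy equation: the paper simply invokes the theory of pseudomonotone operators, noting that the associated operator $H^1(\Omega)\to H^1(\Omega)^*$ is bounded, coercive and weakly continuous (the nonlinearities $\mathcal K$ and $\mathcalI$ enter through the compact embeddings $H^1(\Omega)\Subset L^2(\Omega)$ and $H^1(\Omega)\hookrightarrow L^2(\Gamma)$), whereas you propose a Schauder fixed-point argument. That is a legitimate alternative route in principle.

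However, the Schauder step as written has a genuine gap. You place $\tilde w\in L^2(\Omega)$ and ``freeze the nonlinearities as $\mathcal K(\lambda_\tau^k,\tilde w)$ and $\mathcalI(\tilde w)$.'' But the boundary term $\int_\Gamma b_\tau^k\,\mathcalI(w_\tau^k)\varphi\,\dd S$ in \eqref{EnthalpyEqDis} also depends on the unknown through $\mathcalI$, and if you freeze it as $b_\tau^k\mathcalI(\tilde w)$ you need the trace $\tilde w|_\Gamma$ — which does not exist for $\tilde w\in L^2(\Omega)$. So the linearized problem is not defined on your fixed-point ball. Two ways to repair this: (i) do \emph{not} freeze the boundary term — since $\mathcalI$ is non-decreasing with $\mathcalI(0)=0$, the map $w\mapsto\int_\Gamma b_\tau^k\mathcalI(w)\varphi\,\dd S$ is monotone and the auxiliary problem can be solved by Browder--Minty rather than Lax--Milgram; or (ii) run the fixed-point scheme in $H^1(\Omega)$, using the compactness of both $H^1(\Omega)\Subset L^2(\Omega)$ and the trace $H^1(\Omega)\Subset L^2(\Gamma)$ to get continuity and compactness of the solution map. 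Two smaller imprecisions: from \eqref{OmegaGrowthBound} and $\tilde w\in L^2(\Omega)$ one obtains $\mathcalI(\tilde w)\in L^{2\omega}(\Omega)\subset L^{2q'}(\Omega)$, not merely $L^\omega$, and it is the $L^{2q'}\times L^{2q}$ H\"older pairing that puts the coupling source in $L^2(\Omega)$; and the a-priori $H^1$ bound for $T(\tilde w)$ is not independent of $\tilde w$ — it depends on $\|\tilde w\|_{L^2}^{1/\omega}$ — so the invariant ball exists because the growth of $\mathcalI$ is sublinear ($1/\omega<1$), not because the bound is uniform.
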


\begin{proof}
First note that \eqref{EnthalpyEqDis} is decoupled from
\eqref{BalanceEqDis} and \eqref{MaxwellDis}.
It is easy to
see that,
for any $\nu \in {\mathscr Y}^{p}(\O;\R^d)$, there exists a unique
$u_{m} \in H^1(\R^d)$ that solves \eqref{MaxwellDis} since $m \equiv \id \bulet \nu \in L^2(\Omega; \R^{d})$. Therefore we may proceed by the \emph{direct method} to prove existence of \eqref{BalanceEqDis}, \eqref{MaxwellDis}, i.e. take at time-step $k$ a minimization sequence $\{q_{k,j}\}_{j=0}^\infty = \{(\nu_{k,j}, \lambda_{k,j})\}_{j=0}^\infty$ of \eqref{BalanceEqDis}. Due to the coercivity of the cost functional (thanks to assumption \eqref{pGrowth}) this minimizing sequence converges weakly*(at least in terms of a subsequence) in the space $L^\infty_\mathrm{w}(\Omega; \mathcal{M}(\R^d)) \times L^q(\Omega; \mathrm{R}^{d+1})$ to some $q_k$. Moreover, note that, again due to assumption \eqref{pGrowth}, $\nu_k$ is a Young measure. Then, by the convexity of the functional in $\lambda$, $\nu$, by the fact that $m_{k,j} \rightharpoonup m_k$ in $L^2(\Omega; \R^d)$ (because $\nu_{k,j}$ as well as $\nu_k$ are in ${\mathscr Y}^{p}(\O;\R^d)$ and $\nu_{k,j}\stackrel{*}{\rightharpoonup} \nu_k$ in $L^\infty_\mathrm{w}(\Omega ; \mathcal{M}(\R^d))$) and the boundedness from below of $\phi$, $q_k$ is the sought minimizer of \eqref{BalanceEqDis}
at time-step $k$.

The existence of solutions to \eqref{EnthalpyEqDis} for $k=1$ (and subsequently
also for all other $k$) can be proved by standard methods exploiting theory
of pseudomonotone operators (here rather trivially since the problem is
semi-linear and thus the underlying operator $H^1(\Omega)\to H^1(\Omega)^*$
is weakly continuous).
Note that the right-hand side can be
represented as an element of $H^1(\Omega)^*$ due to the integrability of the initial data, the suitable choice of the time-discretization of the
external heat flux $\theta^k_{\mathrm{ext}, \tau} \in L^2(\Gamma)$, and
thanks to the
regularization term $\tau |\lambda|^{2q}$ in \eqref{BalanceEqDis}
which causes $|\frac{\lambda^k{-}\lambda^{k-1}}{\tau}|^q \in L^2(\Omega)$,
as already mentioned above.

Let us test \eqref{EnthalpyEqDis} by $[w_\tau ^k]^-\equiv \min{(0,w_\tau^k)}$, which is an element of $ H^1(\Omega)$ as $w_\tau ^k \in H^1(\Omega)$ and hence a legal test function. We get
\begin{align*}
&\int_{\Omega} \Big( w_\tau ^k [w_\tau ^k]^- + \tau \mathcal{K}(\lambda^k_\tau, w^k_\tau) \nabla w_\tau ^k \cdot \nabla [w_\tau ^k]^- \Big) \dd x \leq \int_{\Omega} \left( \delta_S^*\Big( \frac{\lambda_\tau ^k{-}\lambda_\tau^{k-1}}{\tau}\Big) [w_\tau ^k]^- +\epsilon|\frac{\lambda_\tau ^k{-}\lambda_\tau^{k-1}}{\tau}\Big|^q [w_\tau ^k]^- \right.
\\&\qquad
+\left. \mathcalI(w_\tau)\ccoupl{\cdot}\frac{\lambda_\tau ^k-\lambda_\tau ^{k-1}}{\tau} [w_\tau ^k]^- + w_\tau ^{k-1} [w_\tau ^k]^- \right) \dd x
- \int_{\Gamma} \Big(b^k_\tau \mathcalI(w_\tau ^k)[w_\tau ^k]^- + b^k_\tau \theta^k_{\mathrm{ext},\tau}[w_\tau ^k]^- \Big)\dd S.
\end{align*}

Now $\mathcalI(w_\tau ^k)[w_\tau^k]^-=0$; here we realize that we have defined $\mathcalI(w)=0$ for $w\le0$. Further
we realize that
$\delta_S^*( \frac{\lambda_\tau^k{-}\lambda_\tau^{k-1}}{\tau})\ge0$ and
$\epsilon|\frac{\lambda_\tau^k{-}\lambda_\tau^{k-1}}{\tau}|^q\ge0$ which implies
that $\delta_S^*(\frac{\lambda_\tau^k{-}\lambda_\tau^{k-1}}{\tau})[w_\tau ^k]^-\le0$ and $\epsilon|\frac{\lambda_\tau^k{-}\lambda_\tau ^{k-1}}{\tau}\Big|^q[w_\tau ^k]^-\le0$.
Using these facts and
by exploiting that $b^k_\tau\theta_\mathrm{ext,\tau}[w_\tau ^k]^-\le0$ (a consequence of \eqref{ExternalTemp}), we get
\begin{equation*}
\int_{\Omega}\big|[w_\tau ^k]^-\big|^2
+ \tau \kappa_0 \big|\nabla [w_\tau ^k]^-\big|^2 \dd x
\leq \int_{\Omega}w_\tau ^{k-1} [w_\tau ^k]^- \dd x.
\end{equation*}
Using this equation recursively and when also taking into account that
$w_\tau ^0 \geq 0$ gives $w_\tau^k \geq 0$.
\end{proof}

Let us introduce the notion of \emph{piecewise affine} interpolants
$\lambda_\tau$ and $w_\tau$ defined by
\begin{align}\nonumber
\big[\lambda_\tau,w_\tau\big](t):=\frac{t-(k{-}1)\tau}\tau \big(\lambda_\tau^k,w_\tau^k\big) +\frac{k\tau-t}\tau \big(\lambda_\tau^{k-1},w_\tau^{k-1}\big)
\qquad\text{ for $\ t\in[(k{-}1)\tau,k\tau]\ $}
\end{align}
with $\ k=1,...,T/\tau$.
In addition define the backward \emph{piecewise constant interpolants}
$\bar{\nu}_\tau$, $\bar{\lambda}_\tau$, and $\bar{w}_\tau$ by
\begin{align}
\big[\bar{\nu}_\tau,\bar{\lambda}_\tau,\bar{w}_\tau\big](t):=\big(\nu_\tau^k,\lambda_\tau^k,w_\tau^k\big)
\qquad\qquad\text{ for $\ (k{-}1)\tau<t\le k\tau$,\ \ $k=1,...,T/\tau$}.
\end{align}
Eventually, we will also need the ``retarded'' enthalpy and magnetization piecewise constant interpolant $\underline{w}_\tau$, $\underline{m}_\tau$ defined by
\begin{align}\label{w-backward}
[\underline{w}_\tau(t), \underline{m}_\tau(t)]:= [w_\tau^{k-1}, \id \bulet \nu_\tau^{k-1}] \qquad\qquad \text{ for $\ (k{-}1)\tau<t\le k\tau$,\ \ $k=1,...,T/\tau$}.
\end{align}

Note that from now on \emph{we use $C$ as a generic constant}, which ma change from expression to expression, and do not specify its dependence on the problem parameters such as $\epsilon, q, p, |\Omega|$.

\begin{proposition}[A-priori estimates]
\label{aprioriII}
Let the assumptions \eqref{ass} hold and let $\tau < \tau_0$ for some $\tau_0 > 0$ fixed. Then the interpolants of discrete weak solutions satisfy
\begin{align}
&\!\!\sup_{t\in [0,T]} \mbox{$\int_\Omega$}|\cdot|^{p}\bulet\bar{\nu}_{\tau} \dd x
\leq C, \label{Nu-Apriori} \\
&\big\| \DT{ \lambda}_\tau \big\|_{L^q(Q;\R^{d+1})} \leq C,
\label{LambdaApriori} \\
&\big\|\bar{\lambda}_\tau\big\|_{L^\infty([0,T]; L^{2q}(\O;\R^{d+1}))}\le C\tau^{-1/2q},
\label{lambdaBlowUp}\\
&\big\| \bar{w}_\tau\big\|_{L^\infty([0,T]; L^1(\Omega))}\leq C,
\label{EnthalpyLinftyL1} \\
&\big\| \nabla\bar{w}_\tau\big\|_{L^r(Q; \R^d)) }\leq C_r\qquad\text{ with any }\ 1\le r<\frac{d{+}2}{d{+}1},
\label{EnthalpyLinftyL1+} \\
&\big\| \DT{ w}_\tau \big\|_{\mathcal{M}([0,T]; {W^{1,\infty}(\Omega)}^*)}\leq C.
\label{DualEnthalpyEst}
\end{align}
\end{proposition}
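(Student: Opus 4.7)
The plan is to combine the discrete energy inequality from \eqref{BalanceEqDis} with the enthalpy balance \eqref{EnthalpyEqDis} tested by $\varphi\equiv 1$, close the resulting coupled discrete Gronwall, and then exploit the heat equation separately for the gradient and dual-time-derivative estimates. First I would test \eqref{BalanceEqDis} at time-step $k$ with the competitor $(\nu_\tau^{k-1},\lambda_\tau^{k-1})$; since this competitor gives zero dissipation and since the only $t$-dependence of $\Gm$ sits in $-h(t){\cdot}m$, after rearranging I obtain an inequality of the form
\begin{align*}
&\widetilde\Gm_\tau^k-\widetilde\Gm_\tau^{k-1}
+\tau\!\!\int_\Omega\!\Big(\delta_S^*(\DT\lambda_\tau^k)+\tfrac\epsilon q|\DT\lambda_\tau^k|^q+|\lambda_\tau^k|^{2q}-|\lambda_\tau^{k-1}|^{2q}\Big)\dd x \\
&\quad\le -\!\int_{(k-1)\tau}^{k\tau}\!\!\!\!\int_\Omega\DT h{\cdot}m_\tau^{k-1}\,\dd x\dd s
-\tau\!\!\int_\Omega(\mathcalI(w_\tau^{k-1}){-}\theta_{\rm c})\ccoupl{\cdot}\DT\lambda_\tau^k\dd x,
\end{align*}
where $\widetilde\Gm_\tau^k$ denotes the $\theta$-independent part of $\Gm$ at $(\nu_\tau^k,\lambda_\tau^k)$ with $h$ frozen at $t=k\tau$. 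In parallel, testing \eqref{EnthalpyEqDis} by $\varphi\equiv 1$ and using $b,\mathcalI(w_\tau^k),\theta^k_{{\rm ext},\tau}\ge 0$ (non-negativity of $w_\tau^k$ coming from Proposition~\ref{ExistenceDiscSol}) yields an analogous inequality for $\int_\Omega(w_\tau^k-w_\tau^{k-1})\dd x$ whose right-hand side contains $\tau\int\mathcalI(w_\tau^k)\ccoupl{\cdot}\DT\lambda_\tau^k\dd x$.

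Upon adding the two inequalities the adiabatic coupling nearly cancels, leaving only the discrepancy $(\mathcalI(w_\tau^k){-}\mathcalI(w_\tau^{k-1}))\ccoupl{\cdot}\DT\lambda_\tau^k$, while the two dissipation terms differ merely by the factor $1-1/q$. The Zeeman work, the constant $\theta_{\rm c}\ccoupl{\cdot}\DT\lambda$ piece, the above discrepancy and the boundary heat input are all absorbed by Young's inequality: the $\DT\lambda$-factor into a small fraction of $\epsilon|\DT\lambda|^q$, and the temperature factor against $\int w\dd x$ via the $L^{q'}$-control of $\mathcalI(w)$ afforded by \eqref{OmegaGrowthBound} under the hypothesis $\omega\ge q'$. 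Since the magnetostatic and penalty pieces of $\widetilde\Gm_\tau^k$ are non-negative and coercivity \eqref{pGrowth} of $\phi$ gives $\widetilde\Gm_\tau^k\ge c\int_\Omega|\cdot|^p\bulet\nu_\tau^k\dd x-C$ after another Young against the Zeeman term (using $p>4$ and \eqref{ass-f}), what remains is a discrete Gronwall inequality for $E_\tau^k:=\widetilde\Gm_\tau^k+\int_\Omega w_\tau^k\dd x$. Its iteration from the bounded $E_\tau^0$ (guaranteed by \eqref{ass-IC} and \eqref{lambdaZeroApprox}) yields uniform bounds on $E_\tau^k$ and on $\sum_k\tau\|\DT\lambda_\tau^k\|_{L^q(\Omega)}^q$, and hence simultaneously \eqref{Nu-Apriori}, \eqref{LambdaApriori} and \eqref{EnthalpyLinftyL1}; \eqref{lambdaBlowUp} falls out of $\tau\int_\Omega|\lambda_\tau^k|^{2q}\dd x\le C$.

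For \eqref{EnthalpyLinftyL1+} I would apply a Boccardo--Gallou{\"e}t truncation: test \eqref{EnthalpyEqDis} by $\varphi=1-(1{+}w_\tau^k)^{-\alpha}$ with a suitable $\alpha\in(0,1)$ (legal since $w_\tau^k\in H^1(\Omega)$ is non-negative). Ellipticity \eqref{Sub6Growth} then yields the weighted term $\alpha\kappa_0\int_\Omega|\nabla w_\tau^k|^2(1{+}w_\tau^k)^{-1-\alpha}\dd x$ on the left, the right-hand side is bounded in $L^1$ by the previous paragraph, and a standard Gagliardo--Nirenberg interpolation combined with the $L^\infty(L^1)$ control of $\bar w_\tau$ delivers the desired $L^r(W^{1,r})$ bound for every $r<(d{+}2)/(d{+}1)$. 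For \eqref{DualEnthalpyEst} one reads $\DT w_\tau$ off from \eqref{EnthalpyEqDis} as a functional on $W^{1,\infty}(\Omega)$: the flux is controlled by $\|\mathcal K(\bar\lambda_\tau,\bar w_\tau)\nabla\bar w_\tau\|_{L^1(Q)}$ via \eqref{Sub6Growth} and \eqref{EnthalpyLinftyL1+}, the source terms are in $L^1(Q)$ by the previous step, and the Robin boundary term uses $\mathcalI(\bar w_\tau)\in L^1(\Sigma)$ via the trace embedding for $W^{1,r}$.

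The hard part is the temperature mismatch $(\mathcalI(w_\tau^k){-}\mathcalI(w_\tau^{k-1}))\ccoupl{\cdot}\DT\lambda_\tau^k$ caused by the semi-implicit decoupling (retarded $\mathcalI(w_\tau^{k-1})$ in \eqref{BalanceEqDis} versus current $\mathcalI(w_\tau^k)$ in \eqref{EnthalpyEqDis}). Closing the Gronwall loop requires absorbing this term into the $|\DT\lambda|^q$-dissipation, whose dual integrability is only $L^{q'}$, and it is precisely the hypothesis $\omega\ge q'$ in \eqref{OmegaGrowth}, promoted through \eqref{OmegaGrowthBound} to an $L^{q'}$-control of $\mathcalI(w)$, that makes this absorption possible; without this matching of exponents the coupled energy inequality would not self-close.
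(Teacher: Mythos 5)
Your plan is structurally close to the paper's but contains one genuine gap in the Gronwall argument that you need to address.

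Where you and the paper agree: the discrete energy estimate from \eqref{BalanceEqDis}, the heat balance tested by $\varphi\equiv 1$, the Young inequalities powered by $\omega\ge q'$ via \eqref{OmegaGrowthBound}, the Boccardo--Gallou\"{e}t truncation with $\eta(w)=1-(1{+}w)^{-a}$, and reading off \eqref{DualEnthalpyEst} from the equation itself are all as in the paper (which organizes Steps~1--2 slightly differently, testing $(\nu_\tau^{k-1},\lambda_\tau^k)$ and then separately deriving the flow rule from optimality in $\lambda$ rather than taking the retarded competitor $(\nu_\tau^{k-1},\lambda_\tau^{k-1})$ in one shot; your variant is legitimate and, if anything, avoids one application of the discrete chain rule).

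The gap: after adding your two inequalities, you cannot simply say ``the two dissipation terms differ merely by the factor $1-1/q$''. The incremental minimization produces the dissipation \emph{potential} $\zeta(\DT\lambda_\tau^k)=\delta_S^*(\DT\lambda_\tau^k)+\frac{\epsilon}{q}|\DT\lambda_\tau^k|^q$ on the left, while the heat equation carries the dissipation \emph{rate} $\xi(\DT\lambda_\tau^k)=\delta_S^*(\DT\lambda_\tau^k)+\epsilon|\DT\lambda_\tau^k|^q$ on the right. Since $\zeta$ is not $1$-homogeneous in the rate-dependent part, $\xi-\zeta=\epsilon(1-\tfrac1q)|\DT\lambda_\tau^k|^q>0$, and upon adding with unit weight your inequality becomes
\begin{align*}
\widetilde\Gm_\tau^k-\widetilde\Gm_\tau^{k-1}+\int_\Omega(w_\tau^k-w_\tau^{k-1})\,\dd x+\ldots
\ \le\ \ldots+\tau\,\epsilon\Big(1-\frac1q\Big)\int_\Omega|\DT\lambda_\tau^k|^q\,\dd x+\ldots,
\end{align*}
i.e.\ a positive uncontrolled $|\DT\lambda|^q$-term survives on the right. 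That destroys both the $\lambda$-estimate \eqref{LambdaApriori} and the closing of the Gronwall loop, since the whole point was to retain a positive multiple of $\int|\DT\lambda_\tau^k|^q$ on the \emph{left} to absorb the Zeeman and calorimetric Young remainders. The fix is the one the paper uses: multiply the heat balance (tested by $1$) by a small constant $\eta<1/q$ before adding (the paper takes $\eta=\epsilon/(8q)$). Then $\eta\,\xi\le\zeta$ up to a strict margin, the added inequality keeps $(\frac{\epsilon}{q}-\eta\epsilon)\tau\int_\Omega|\DT\lambda_\tau^k|^q\,\dd x$ on the left, and your Lyapunov functional becomes $E_\tau^k=\widetilde\Gm_\tau^k+\eta\int_\Omega w_\tau^k\,\dd x$ rather than the unweighted sum you propose. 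Note also that the temperature mismatch $(\mathcalI(w_\tau^k)-\mathcalI(w_\tau^{k-1}))\ccoupl\cdot\DT\lambda_\tau^k$ you flag as ``the hard part'' is actually harmless: you need not estimate the difference as small; you simply Young each of $\mathcalI(w_\tau^{k})$ and $\mathcalI(w_\tau^{k-1})$ separately against the retained $|\DT\lambda|^q$ using $\omega\ge q'$, and the two resulting $C(1+|w_\tau^k|)+C(1+|w_\tau^{k-1}|)$ terms are both handled by Gronwall. It is precisely the unweighted cancellation of $\xi$ against $\zeta$ that fails, not the treatment of the retarded temperature.
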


Let us emphasize that our strategy of proving \eqref{LambdaApriori} and
\eqref{EnthalpyLinftyL1} will slightly deviate from the standard approach based on
testing the flow rule \eqref{FlowRuleBasic} by $\DT{\lambda}$ together with the entropy equation \eqref{EntropyEquation} by 1, which, when adding these two, would lead to canceling of the dissipative heat rate as well as the calorimetric term $\mathcalI(w)\ccoupl{\cdot}\DT{\lambda}$, see e.g. \cite{tr1}.
In contrast to \cite{tr1} however, here we use the retarded enthalpy in the time-incremental minimization problem and hence the strategy of \cite{tr1} would not work in our case, as the calorimetric term \emph{would not cancel out.} Therefore we exploit the rate-dependent dissipation term that yields more regularity than the rate-independent contribution.

In what follows, we will use the abbreviation $\llangle\cdot,\cdot\rrangle$
for the scalar product in $H^{-1}(\O;\R^{d+1})$; in view of the specific
choice of the norm on $H^{-1}(\O;\R^{d+1})$ in Sect.~\ref{sect-evol}, this means
\begin{align}
\llangle\lambda,v\rrangle:=
\int_\O\nabla\Delta^{-1}\lambda{\cdot}\nabla\Delta^{-1}v\,\d x.
\end{align}
\begin{proof}[Proof of Proposition~\ref{aprioriII}] 
{
Before giving a rigorous proof we give a formal heuristic sketch, using the system \eqref{ContinuousSystem1}, on how estimates \eqref{Nu-Apriori}-\eqref{DualEnthalpyEst} can be established; we shall always point to the adequate step in this proof where the formal procedure is performed rigorously on the discrete level. For this heuristics only, we shall assume that all functions are as smooth as needed. Furthermore, let us denote by $(\nu, \lambda, \theta)$ the solutions of $\eqref{ContinuousSystem1}$.

First of all we exploit (\ref{ContinuousSystem1}a), which can be reformulated as minimizing part with respect to $\nu$ the magnetic  of the Gibbs free energy $\mathfrak{G}$ defined as
\begin{equation}\label{def-of-G-mag}
\mathfrak{G}(t,\nu, \lambda):=\int_\Omega \phi \bulet \nu -
h(t){\cdot}m\,\dd x + \int_{\R^d} \frac{1}{2}|\nabla u_{m}|^2\,\dd x
+\frac\varkappa2\big\|\lambda {-} L \bulet \nu\big\|_{H^{-1}(\Omega; \R^{d+1})}^2.
\end{equation}
Then just using the chain rule and realizing that the partial derivative of $\mathfrak{G}$ with respect to $\nu$ evaluated at the minimizer has to be zero leads to
\begin{align}
\mathfrak{G}(t_0,\nu(t_0), \lambda(t_0)) &= \mathfrak{G}(t_0,\nu(0), \lambda(0)) {+} \int_0^{t_0} \!\! \frac{\dd}{\dd t } \mathfrak{G}(t,\nu(t), \lambda(t)) \dd t \nonumber \\&= \mathfrak{G}(t_0,\nu(0), \lambda(0)) + \int_0^{t_0} \Big( \DT{h}(t) m {+} \varkappa \llangle\lambda{-}L\bulet \nu,\DT{\lambda}\rrangle \Big) \dd t,
\label{HeuristicBalanceLaw}
\end{align}
for somee arbitrary $t_0 \in [0,T]$. Using the Young inequality and assumption (\ref{ass}a) on the coercivity of the isothermal part of the anisotropy energy and also (\ref{ass}c) yields (cf. Step 1 below) the following estimate
\begin{equation}
C \int_\Omega |\cdot|^p \bulet \nu(t_0)
\leq \int_0^{t_0}\!\!\int_\Omega \Big(\frac{\e}{4q}|\DT{\lambda}|^q + C|\cdot|^{p} \bulet \nu
 \Big)\, \dd x \dd t + C.
\label{heuristics-1}
\end{equation}

Inequality \eqref{heuristics-1} needs to be combined with an estimate on $\DT{\lambda}$. In order to get it, we multiply the flow-rule (\ref{ContinuousSystem1+}b) by $\frac{1}{q} \DT{\lambda}$ and integrate over $\Omega$ and $[0,t_0]$, which yields (note that $\partial \delta_\mathrm{S}^*(\DT{\lambda}) \DT{\lambda} = \delta_\mathrm{S}^*(\DT{\lambda})$)
\begin{equation*}
\int_0^{t_0}\int_\Omega \frac{1}{q} \Big( \delta_\mathrm{S}^*(\DT{\lambda}) + \frac{\epsilon}{q} |\DT{\lambda}|^q \Big) \dd x \dd t  = \int_0^{t_0} \Big( \frac{\varkappa}{q} \llangle\lambda{-}L\bulet \nu,\DT{\lambda}\rrangle - \frac{1}{q} \int_\Omega \big(\mathcalI(w){-}\theta_{\rm c}\big)\ccoupl \cdot \DT{\lambda} \, \dd x \Big) \dd t.
\end{equation*}
Using the chain-rule, we can rewrite this as 
\begin{align}
\int_0^{t_0}&\int_\Omega \frac{1}{q} \Big( \delta_\mathrm{S}^*(\DT{\lambda}) + \frac{\epsilon}{q} |\DT{\lambda}|^q \Big) \dd x \dd t + \frac{\varkappa}{q}\|\DT{\lambda}(t_0)\|_{H^{-1}(\Omega; \R^{d+1})}  \nonumber\\ &\leq \frac{\varkappa}{q}\|\DT{\lambda}(0)\|_{H^{-1}(\Omega; \R^{d+1})} - \int_0^{t_0} \Big( \frac{\varkappa}{q} \llangle L\bulet \nu,\DT{\lambda}\rrangle + \frac{1}{q} \int_\Omega \big(\mathcalI(w){-}\theta_{\rm c}\big)\ccoupl \cdot \DT{\lambda} \, \dd x \Big) \dd t,
\label{flowRuleHeuristics}
\end{align}
which, by usage of the Young inequality (cf. Step 2 below), yields the following estimate 
\begin{equation}
\int_0^{t_0}\int_\Omega \frac{1}{q} \Big( \delta_\mathrm{S}^*(\DT{\lambda}) + \frac{\epsilon}{q} |\DT{\lambda}|^q \Big) \dd x \dd t + \frac{\varkappa}{q}\|\DT{\lambda}(t_0)\|_{H^{-1}(\Omega; \R^{d+1})} \leq \int_0^{t_0}\!\!\int_\Omega \Big(\frac{\e}{4q}|\DT{\lambda}|^q + C|\cdot|^{p} \bulet \nu + C|w|
 \Big)\, \dd x \dd t + C.
\label{heuristics-2}
\end{equation}
Finally, multiplying the enthalpy equation (\ref{ContinuousSystem1}c) by $\frac{\e}{8q}$, integrating over $\Omega$ and $[0,t_0]$, and using the Young inequality as already above gives
\begin{equation}
\frac{\e}{8q} \int_0^{t_0} \int_\Omega \DT{w} \dd x \dd t \leq  \int_0^{t_0} \int_\Omega \Big(\frac{\e}{4q} |\DT{\lambda}|^q + C|w| \Big) \dd x \dd t + C
\label{heuristics-3}
\end{equation}
Adding \eqref{heuristics-1}, \eqref{heuristics-2} and \eqref{heuristics-3} gives, on this heuristic level, \eqref{Nu-Apriori}, \eqref{LambdaApriori} and \eqref{EnthalpyLinftyL1}.

Estimate \eqref{EnthalpyLinftyL1+} is got by testing the enthalpy equation (\ref{ContinuousSystem1}c) by $1-\frac{1}{(1+w)^a}$ (cf. Step 4, below) while \eqref{DualEnthalpyEst} is got from the enthalpy equation itself.

For clarity, let us divide the formal part of the proof into five steps.
\hspace{1ex}

\noindent \emph{Step 1: Using the time-incremental minimization problem}:
In this step we perform the procedure that on the heuristic level led to \eqref{heuristics-1}; together with Step 2 and Step 3 it will give \eqref{Nu-Apriori}--\eqref{EnthalpyLinftyL1}.

As we know that $(\nu_\tau^l, \lambda_\tau^l)$ solves the time-incremental
problem \eqref{BalanceEqDis}, we may write
\begin{align*}
\Gm(l\tau, &\nu_\tau ^l, \lambda_\tau^l, \mathcalI(w_\tau^{l-1})) +
\int_\Omega\! \left(\tau |\lambda_\tau^l|^{2q}
+ \tau\delta_S^*\Big(\frac{\lambda_\tau^l{-}\lambda_\tau^{l-1}}{\tau}\Big)
+\frac{\tau\epsilon}q\Big|\frac{\lambda_\tau^l{-}\lambda_\tau^{l-1}}{\tau}\Big|^q
\right)\dd x
\\
\leq &\Gm(l\tau, \nu_\tau ^{l-1}, \lambda_\tau^l, \mathcalI(w_\tau^{l-1})) +
\int_\Omega\! \left(\tau |\lambda_\tau^l|^{2q}
+ \tau\delta_S^*\Big(\frac{\lambda_\tau^l{-}\lambda_\tau^{l-1}}{\tau}\Big)
+\frac{\tau\epsilon}q\Big|\frac{\lambda_\tau^l{-}\lambda_\tau^{l-1}}{\tau}\Big|^q
\right)\dd x,
\end{align*}
which can be rewritten using the magnetic part of the Gibbs free energy as
\begin{align*}
\mathfrak{G}(l\tau, \nu_\tau ^l, \lambda_\tau^l) &\leq \mathfrak{G}(l\tau, \nu_\tau ^{l-1}, \lambda_\tau^l) \\ &\leq \mathfrak{G}((l-1)\tau, \nu_\tau ^{l-1}, \lambda_\tau^{l-1}) + \int_{(l-1)\tau}^{l\tau} \Big(\mathfrak{G}'_t(l \tau, \nu_\tau ^l) +  \mathfrak{G}'_\lambda(\nu_\tau ^l, \lambda_\tau^l) \Big) \dd t 
\end{align*}
where the inequality on the second line is got by the discrete chain rule (relying on convexity). Realizing that $\mathfrak{G}'_t(l \tau, \nu_\tau ^l) = \int_\Omega \DT{h}_\tau(l\tau) \cdot m_\tau^l \dd x$ ($h_\tau$ denotes the piece-wise linear approximation of $h$) and $\mathfrak{G}'_\lambda(\nu_\tau ^l, \lambda_\tau^l) = \varkappa \llangle \lambda_\tau^l - L \bulet \nu_\tau ^l , \frac{\lambda_\tau^l-\lambda_\tau^{l-1}} {\tau}\rrangle$ and summing from $0$ to $k$ gives 
\begin{equation}
\mathfrak{G}(t_k, \bar{\nu}_\tau (t_k), \bar{\lambda}_\tau(t_k)) \leq  \mathfrak{G}(0, \bar{\nu}_\tau (0), \bar{\lambda}_\tau(0)) + \int_0^{t_k} \Big( \int_\Omega \DT{h}_\tau \cdot \bar{m}_\tau \dd x + \varkappa \llangle \bar{\lambda}_\tau - L \bulet \bar{\nu}_\tau, \DT{\lambda}_\tau\rrangle \Big) \dd t
\label{TestBalanceDisc}
\end{equation}
with $t_k = k \tau$, a discrete analogy of \eqref{HeuristicBalanceLaw}. Exploiting ones again the discrete chain rule as 
$$
\frac{1}{2} \|\bar{\lambda}_\tau(0)\|_{H^{-1}(\Omega; \R^{d+1})}^2 -\frac{1}{2} \| \bar{\lambda}_\tau(t_k)\|_{H^{-1}(\Omega; \R^{d+1})}^2 \dd x\ge
-\int_0^{t_k} \llangle \DT{\lambda}_\tau,\bar{\lambda}_\tau\rrangle\dd t,
$$
we can rewrite \eqref{TestBalanceDisc} as 
\begin{align*}
\mathfrak{G}(t_k, \bar{\nu}_\tau (t_k), \bar{\lambda}_\tau(t_k))  &+ \frac{\varkappa}{2}\|\bar{ \lambda}_\tau (t_k)\|_{H^{-1}(\Omega; \R^{d+1})}^2 \leq  \frac{\varkappa}{2}\|\bar{ \lambda}_\tau (0)\|_{H^{-1}(\Omega; \R^{d+1})}^2 \\&\quad+ \mathfrak{G}(0, \bar{\nu}_\tau (0), \bar{\lambda}_\tau(0)) + \int_0^{t_k} \Big( \int_\Omega \DT{h}_\tau \cdot \bar{m}_\tau \dd x - \varkappa \llangle L \bulet \bar{\nu}_\tau, \DT{\lambda}_\tau\rrangle \Big) \dd t
\end{align*}
Estimate on the right hand side (using Young inequality and (\ref{ass}c))
\begin{align*}
\Big|\int_0^{t_k}\!\!\!\int_\Omega \DT{h}_\tau{\cdot}\bar{m}_\tau \dd x \dd t\Big|
&\leq C \int_0^{t_k}\!\!\!\int_\Omega\big(|\DT{h}_\tau|^{p'}
+ |\cdot|^{p} \bulet \bar{\nu}_\tau \big)\dd x \dd t
\leq C \Big( 1+\int_0^{t_k}\!\!\!\int_\Omega |\cdot|^{p} \bulet \bar{\nu}_\tau \dd x \dd t \Big) \\
\Big|\int_0^{t_k} \llangle L \bulet \bar{\nu}_\tau, \DT{\lambda}_\tau\rrangle \dd t \Big| &\leq \int_0^{t_k} \varkappa\|L \bulet \bar{\nu}_\tau\|_{H^{-1}(\Omega; \R^{d+1})} \|\DT{ \lambda}_\tau\|_{H^{-1}(\Omega; \R^{d+1})} \dd t\\ &\leq \int_0^{t_k} \Big(\frac{\e}{4q c_\mathrm{em}} \|\DT{ \lambda}_\tau\|^q_{H^{-1}(\Omega; \R^{d+1})} 
 + C\|L \bulet \bar{\nu}_\tau\|^{q'}_{H^{-1}(\Omega; \R^{d+1})} \Big) \dd t \\ &\leq \int_0^{t_k} \Big(\frac{\epsilon}{4q}\|\DT{\lambda}_\tau\|_{L^q(\Omega; \R^{d+1})}^q 
+ C \int_\Omega |\cdot|^p \bulet \bar{\nu}_\tau \dd x \Big) \dd t + C
\end{align*}
with $p' = \frac{p}{p-1}$ and $c_\mathrm{em}$ the specific constant for which $\|\DT{ \lambda}_\tau\|^q_{H^{-1}(\Omega; \R^{d+1})} \leq c_\mathrm{em} \|\DT{\lambda}_\tau\|_{L^q(\Omega; \R^{d+1})}^q$. Note that we also estimated, thanks to  $q \geq 2$, $\|L \bulet \bar{\nu}_\tau\|_{H^{-1}(\Omega; \R^{d+1})}^{q'} \leq C (1+\|L \bulet \bar{\nu}_\tau\|_{L^{2}(\Omega; \R^{d+1})}^{2}) \leq C(1+\int_\Omega|L|^{2} \bulet\bar{\nu}_\tau \dd x) \leq C(1+\int_\Omega|\cdot|^{p} \bulet\bar{\nu}_\tau \dd x)$. 

On the other hand, we may estimate $\mathfrak{G}(t_k, \bar{\nu}_\tau (t_k), \bar{\lambda}_\tau(t_k))$ from above by using (\ref{ass}a) as
\begin{align}
\mathfrak{G}(t_k, \bar{\nu}_\tau (t_k), \bar{\lambda}_\tau(t_k)) &\geq \int_\Omega \phi \bulet \bar{\nu}_\tau (t_k) \dd x - \Big|\int_\Omega {h}(t_k){\cdot}\bar{m}_\tau(t_k) \dd x\Big| \nonumber \\ &\geq \int_\Omega c_1^A \big(1+ |\cdot|^p \bulet \bar{\nu}_\tau(t_k)\big) \dd x - \int_\Omega \Big( C|{h}|^{p'}
+ \frac{c_1^A}{2}|\cdot|^{p} \bulet \bar{\nu}_\tau \Big) \dd x  \nonumber \\ &\geq C\Big(\int_\Omega |\cdot|^p \bulet \bar{\nu}_\tau(t_k) \dd x - 1\Big) \label{UpperEstimateG}
\end{align}
Combining \eqref{UpperEstimateG} with the estimates on the right-hand side of \eqref{TestBalanceDisc} one gets 
\begin{align}
C \int_\Omega |\cdot|^p \bulet \bar{\nu}_\tau(t_k)
+ \frac{\varkappa}{2} \int_\Omega |\bar{\lambda}_\tau(t_k)|^{2q} \dd x
\leq \int_0^{t_k}\!\!\int_\Omega \Big(\frac{\e}{4q}|\DT{\lambda}_\tau|^q + C|\cdot|^{p} \bulet \bar{\nu}_\tau \Big)\, \dd x \dd t + C,
\label{finalEstimateEnergy}
\end{align}
a discrete analogy of \eqref{heuristics-1}.
\vspace{1ex}
}

\noindent \emph{Step 2: Testing the flow-rule by $\DT{ \lambda}_\tau$}:
In this step we perform the procedure that on the heuristic level lead to \eqref{heuristics-2}; together with Step 1 and Step 3 it will give \eqref{Nu-Apriori}--\eqref{EnthalpyLinftyL1}.

First note that, as $(\nu_\tau^l,\lambda_\tau^l)$ is a minimizer of \eqref{BalanceEqDis}, the partial sub-differential of the cost functional with respect to $\lambda$ has to be zero at $\lambda_\tau^l$. Realizing that this condition holds at each time level and summing these conditions up to some $k$ leads to
\begin{align}
&\!\!\int_0^{t_k}\!\!\int_\Omega \Big(\delta_S^*( \DT{
\lambda}_\tau)+\frac{\epsilon}{q}|\DT{\lambda}_\tau|^q \Big) \dd x \dd t
\leq \int_0^{t_k}\!\!\bigg(\varkappa\llangle \bar{\lambda}_\tau{-}L \bulet\bar{\nu}_\tau , v_\tau{-}\DT{\lambda}_\tau \rrangle
\nonumber \\&\quad+ \int_\Omega \Big(\big(\mathcalI(\underline{w}_\tau)-\theta_{\rm c}\big) \ccoupl{\cdot}(v_\tau{-}\DT{\lambda}_\tau) + 2q\tau |\bar{\lambda}_\tau|^{2q-2}\bar{\lambda}_\tau(v_\tau{-}\DT{\lambda}_\tau)
+\delta_S^*(v_\tau)+\frac{\epsilon}{q} |v_\tau|^q \Big)\dd x\bigg) \dd t
\label{FlowRuleDisI},
\end{align}
where $v_\tau$ is an arbitrary test function such that $v_\tau(\cdot, x)$ is piecewise constant on the intervals $(t_{j-1}, t_j]$ and $v_\tau(t_j,\cdot) \in L^{2q}(\Omega; \R^{d+1})$ for every $j$.
As
\begin{align*}
&\frac{1}{2} \|\bar{\lambda}_\tau(0)\|_{H^{-1}(\Omega; \R^{d+1})}^2 -\frac{1}{2} \| \bar{\lambda}_\tau(t_k)\|_{H^{-1}(\Omega; \R^{d+1})}^2 \dd x\ge
-\int_0^{t_k} \llangle \DT{\lambda}_\tau,\bar{\lambda}_\tau\rrangle\dd t
\quad\text{ and}\\
&\int_{\Omega}|\bar{\lambda}_\tau(0)|^{2q}-\int_{\Omega}|\bar{\lambda}_\tau(t_k)|^{2q} \dd x \geq -2q\int_0^{t_k}\!\!\int_\Omega \DT{\lambda}_\tau\bar{\lambda}_\tau|\bar{\lambda}_\tau|^{2q-2} \dd x \dd t,
\end{align*}
hold by the discrete chain rule (thanks to the convexity of the involved functions on the left-hand-side), we may test \eqref{FlowRuleDisI} by $v_\tau=0$, which effectively executes the test of the discrete version
of the inclusion \eqref{FlowRuleBasic} by $\frac{1}{q}\DT{ \lambda}_\tau$. This yields
\begin{align}
\int_0^{t_k}\!\!\int_\Omega &\big(\,\delta_S^*( \DT{\lambda}_\tau) +\frac{\epsilon}{q} |\DT{ \lambda}_\tau |^q \big)\dd x \dd t + \frac{\varkappa}{2}\|\bar{ \lambda}_\tau (t_k)\|_{H^{-1}(\Omega; \R^{d+1})}^2 + \int_\Omega \tau |\bar{\lambda}_\tau(t_k)|^{2q} \dd x \nonumber \\ \leq &
\int_0^{t_k}\!\!\bigg(\varkappa\|L\bulet\bar{\nu}_\tau\|_{H^{-1}(\Omega; \R^{d+1})}
\|\DT{ \lambda}_\tau\|_{H^{-1}(\Omega; \R^{d+1})} +\int_\Omega \big(|\mathcalI(\underline{w}_\tau)-\theta_{\rm c}|\,|\ccoupl| \, |\DT{\lambda}_\tau| \big)\dd x\bigg)\dd t\nonumber
\\
& \qquad\qquad\qquad\qquad\qquad
+\frac{\varkappa}{2}\|\bar{\lambda}_\tau (0)\|_{H^{-1}(\Omega ; \R^{d+1})}^2 + \int_\Omega \tau |\bar{\lambda}_\tau (0)|^{2q} \dd x \nonumber \\ \leq & \int_0^{t_k}\!\!\bigg( C\|L\bulet\bar{\nu}_\tau\|^{q'}_{H^{-1}(\Omega; \R^{d+1})} + \frac{\epsilon}{8qc_\mathrm{em}}\|\DT{ \lambda}_\tau\|^q_{H^{-1}(\Omega; \R^{d+1})} \nonumber
\\
& \qquad\qquad\qquad\qquad\qquad
+ \int_\Omega \Big(C|\mathcalI(\underline{w}_\tau)-\theta_{\rm c}|^{q'} +\frac{\epsilon}{8q}|\DT{\lambda}_\tau|^q \Big) \dd x\bigg)\dd t+C \nonumber \\ \leq & \int_0^{t_k}\!\!\int_\Omega \Big( C |\cdot|^{p}\bulet\nu_\tau+C|\bar{w}_\tau| +\frac{\epsilon}{4q}|\DT{ \lambda}_\tau|^q \Big)\dd x\dd t+ \tau\|\lambda_{0,\tau}\|_{L^{2q}(\O; \R^{d+1})}^{2q} + C,
\label{LambdaAprioriOne}
\end{align}
by applying Young's inequality to the terms $\varkappa\|L \bulet \bar{\nu}_\tau\|_{H^{-1}(\Omega; \R^{d+1})} \|\DT{ \lambda}_\tau\|_{H^{-1}(\Omega; \R^{d+1})}$ as well as $|\mathcalI(\underline{w}_\tau)-\theta_{\rm c}||\ccoupl| |\DT{ \lambda}_\tau|$. Subsequently we estimated $\|L \bulet \bar{\nu}_\tau\|_{H^{-1}(\Omega; \R^{d+1})}^{q'}$ the same way ay in Step 1; $c_\mathrm{em}$ was also chosen like in Step 1. Eventually, $|\mathcalI(\underline w_\tau)|^{q'} \leq C(1+|\underline{ w}_\tau|^{q'/\omega}) \leq C(1+|\underline{ w}_\tau|)$
due to \eqref{OmegaGrowthBound} and ${q'/\omega}<1$ (cf. (\ref{ass}d)).
\vspace{1ex}

\noindent \emph{Step 3: Testing the heat equation by 1}:
In this step we perform the procedure that on the heuristic level lead to \eqref{heuristics-2}; together with Step 1 and Step 3 it will give \eqref{Nu-Apriori}--\eqref{EnthalpyLinftyL1}.
%\\
Summing the discrete version of the enthalpy equation \eqref{EnthalpyEqDis} from 0 to $t_k$ leads to
\begin{align}\nonumber
&\int_0^{t_k}\!\!\bigg(\int_{\Omega} \Big(\DT{w}_\tau \varphi+\mathcal{K}(\bar{\lambda}_\tau,\bar{w}_\tau)\nabla\bar{w}_\tau{\cdot}\nabla\varphi
\Big)\dd x +\int_{\Gamma}\bar{b}_\tau \mathcalI(\bar{w}_\tau)\varphi\dd S\bigg) \dd t
\\& \qquad\qquad = \int_0^{t_k}\!\!\bigg(\int_\Omega \Big(\delta_S^*(\DT{\lambda}_\tau)
+\epsilon|\DT{\lambda}_\tau|^q +\mathcalI(\bar{w}_\tau)\ccoupl{\cdot}\DT{\lambda}_\tau\Big)\varphi\dd x
+\int_{\Gamma}\bar{b}_\tau\bar{\theta}_{\mathrm{ext}, \tau}\varphi\dd S\bigg)\dd t,
\label{EnthalpyEqDisI}
\end{align}
where $\varphi$ is an arbitrary test function, such that $\varphi(\cdot, x)$ is piecewise constant on the intervals $(t_{j-1}, t_j]$ and $\varphi(t_j,\cdot) \in H^1(\Omega)$ for every $j$.
Then we test equation \eqref{EnthalpyEqDisI} by 1 to get
\begin{align*}
\int_0^{t_k}\!\!\bigg(\int_\Omega \DT{w}_\tau \dd x + \int_{\Gamma} \bar{b}_\tau\mathcalI(\bar{ w}_\tau) \dd S \bigg)\dd t \leq \int_0^{t_k}\!\!\bigg(\int_\Omega \Big( \delta_S^*(\DT{ \lambda}_\tau) + \epsilon|\DT{ \lambda}_\tau|^q + |\mathcalI(\bar{ w}_\tau )|\,|\ccoupl|\, |\DT{ \lambda}_\tau| \Big) \dd x
\\+ \int_{\Gamma} |\bar{b}_\tau \bar{\theta}_{\mathrm{ext},\tau}| \dd S\bigg) \dd t.
\end{align*}
Estimate the third term on the right-hand side similarly as in Step 2 to arrive at the expression
\begin{equation*}
\int_0^{t_k} \int_\Omega \DT{w}_\tau (t,x) \dd x + \int_{\Gamma} \bar {b}_\tau \mathcalI(\bar{ w}_\tau ) \dd S \dd t \leq \int_Q 2|\DT{\lambda}_\tau |^q + C |\bar{w}_\tau | \dd x \dd t.
\end{equation*}
Multiplying this by $\e/(8q)$ and adding to \eqref{LambdaAprioriOne} and \eqref{finalEstimateEnergy} already yields \eqref{Nu-Apriori}--\eqref{EnthalpyLinftyL1} by the usage of the discrete Gronwall inequality.
%%%%%%%%%%%%%%%%%%%%%%%%%%%%%%%%%%%%%%%%%%%%%%%%%%%%%%%%%%%%%%%%%%%%%%%%%%%%%
\vspace{1ex}

\noindent \emph{Step 4: Estimation of $\nabla\bar w_\tau$}:
In this step we prove \eqref{EnthalpyLinftyL1+}.

Let us test \eqref{EnthalpyEqDisI} by $\eta(\bar w_\tau)$ where $\eta(w)=1-\frac{1}{(1+w)^a}$ with $a > 0$, which, due to the non-negativity of the enthalpy is a legal test. Notice that due to the discrete chain rule relying on the convexity of $\tilde{\eta}$
\begin{equation*}
\int_{\Omega} \tilde{\eta}(T)-\tilde{\eta}(0) \dd x =\int_Q\frac{\dd}{\dd t} \tilde{\eta}(\bar{ w}_\tau ) \dd x \dd t \leq \int_Q\DT{ w}_\tau\eta(\bar w_\tau) \dd x \dd t,
\end{equation*}
where $\tilde{\eta}$ denotes the primitive function of $\eta$ such that $\tilde{\eta}(0)=0$. Realize also that due to the fact that $\eta(w) \geq 0$ also $\tilde{\eta}(T) \geq 0$ and hence we may write
\begin{align}
\nonumber &\kappa_0 a \int_Q\frac{|\nabla\bar{ w}_\tau|^2}{(1{+}\bar{w})^{1+a}} \dd x \dd t =\kappa_0\int_Q|\nabla\bar{ w}_\tau|^2\eta'(\bar w_\tau) \dd x \dd t
\le\int_Q\mathcal{K}(\bar{\lambda}_\tau,\bar{w}_\tau)\nabla\bar{w}_\tau {\cdot}\nabla\bar{w}_\tau\eta'(\bar w_\tau)\dd x \dd t
\\ \nonumber&\qquad=
\int_Q\mathcal{K}(\bar{\lambda}_\tau,\bar{w}_\tau)\nabla\bar{w}_\tau
{\cdot}\nabla\eta(\bar w_\tau)\dd x \dd t
\\ \nonumber&\qquad
\le\int_\O\tilde{\eta}(w_\tau(T))\dd x + \int_Q\mathcal{K}(\bar{\lambda}_\tau,\bar{w}_\tau)\nabla\bar{w}_\tau
{\cdot}\nabla\eta(\bar w_\tau)\dd x \dd t +\int_\Sigma \bar{b}_\tau\mathcalI(\bar{w}_\tau )\eta(\bar w_\tau)\dd S \dd t
\\ \nonumber &\qquad
=\int_\Sigma \bar{b}_\tau \bar{\theta}_{\mathrm{ext}, \tau}\eta(\bar w_\tau)\dd S \dd t +\int_\O\tilde{\eta}(w_0)\d x +\int_Q(\delta_S^*(\DT{\lambda}_\tau)+\mathcalI(\bar{w}_\tau)\ccoupl \cdot\DT{\lambda}_\tau +\epsilon|\DT{\lambda}_\tau |^q)\eta(\bar w_\tau) \dd x \dd t \\ \label{nonlin-estimate} &\qquad
\le C+\|\bar r_\tau\|_{L^1(Q)}
\end{align}
where we used the obvious bound $|\eta(\bar w_\tau)|\le1$ and abbreviated
\begin{equation*}
\bar r_\tau:= \delta_S^*(\DT{\lambda}_\tau)+\epsilon|\DT{\lambda}_\tau|^q
+\mathcalI(\bar{w}_\tau )\ccoupl \cdot \DT{ \lambda}_\tau.
\end{equation*}
As to the $L^1$-bound of $\bar{r}_\tau$, realize that
\begin{align*}
&\int_Q\delta_S^*( \DT{ \lambda}_\tau )+ \epsilon|\DT{ \lambda}_\tau |^q \dd x \dd t \leq 2\epsilon \| \DT{ \lambda}_\tau \|_{L^q(Q)}^q + C\quad\text{ and}
\\
&\int_Q\left|\mathcalI(\bar{ w}_\tau )\ccoupl \cdot \DT{ \lambda}_\tau \right| \dd x \dd t \leq C \Big( 1+\|\bar{ w}_\tau (t)\|_{L^1(Q)} +\epsilon \| \DT{ \lambda_\tau} \|_{L^q(Q)}^q \Big),
\end{align*}
similarly as in Step 1 or Step 2 of this proof, and use the already shown estimates \eqref{LambdaApriori}, \eqref{EnthalpyLinftyL1}. Thus, \eqref{nonlin-estimate} yields $\int_Q\frac{|\nabla\bar{ w}_\tau|^2}{(1+\bar{w})^{1+a}} \dd x \dd t$ bounded. Combining it with \eqref{EnthalpyLinftyL1} like in \cite{boccardo0,boccardo-gallouet1}, cf.~also
\cite[Formulae (4.29)-(4.33)]{tr1}, we obtain \eqref{EnthalpyLinftyL1+}.
\vspace{1ex}

\noindent \emph{Step 5: ``Dual'' estimate for the time derivative}:
Notice that
\begin{align*}
\big\| \DT{w }_\tau &\big\|_{\mathcal{M}([0,T], {W^{1,\infty}(\Omega)}^*)}=
\sum_{k=1}^N\Big\|\frac{w_\tau ^k{-}w_\tau^{k-1}}{\tau}\Big\|_{{W^{1,\infty}(\Omega)}^*} =\sum_{k=1}^N \mathop{\mathrm{sup}}_{v \in W^{1,\infty,}(\Omega),\, \|v\| \leq 1} \int_{\Omega}\frac{w_\tau^k {-}w_\tau ^{k-1}}{\tau}v \dd x \\ &
=\sum_{k=1}^N \mathop{\mathrm{sup}}_{v \in W^{1,\infty}(\Omega), \, \|v\| \leq 1} \int_\Omega \Big(- \mathcal{K}(\lambda_\tau^k, w_\tau^k) \nabla w_\tau^k \cdot \nabla v + \delta_S^*(\DT{\lambda}_\tau(t_k)) v + \epsilon |\DT{\lambda}_\tau(t_k)|^q v
\\&\qquad\qquad\qquad
+ \mathcalI(w^k)\ccoupl \cdot \DT{\lambda}_\tau(t_k) v \Big) \dd x+ \int_{\Gamma} (\bar{b}_\tau \bar{\theta}_{\mathrm{ext}, \tau}-\bar{b}_\tau \mathcalI(w^k)) v \dd S
\\ & \leq \mathop{\mathrm{sup}}_{\tilde{v} \in C([0,T],W^{1,\infty}(\Omega)),\, \|v\| \leq 1} \int_Q \Big(- \mathcal{K}(\bar{\lambda}_\tau, \bar{ w}_\tau ) \nabla \bar{ w}_\tau \cdot \nabla \tilde{v} + \delta_S^*(\DT{\lambda}_\tau) \tilde{v}
\\&\qquad\qquad\qquad + \epsilon |\DT{ \lambda}_\tau|^q \tilde{v} \mathcalI(\bar{w}_\tau )\ccoupl \cdot \DT{ \lambda}_\tau \tilde{v} \Big) \dd x + \int_{\Gamma} (\bar{b}_\tau \bar{\theta}_{\mathrm{ext}, \tau}-\bar{b}_\tau \mathcalI(\bar{ w}_\tau)) \tilde{v} \dd S \dd t.
\end{align*}
Now because of all the preceding steps we may use the H\"{o}lder inequality for all terms on the right-hand side to get estimate \eqref{DualEnthalpyEst}.
\end{proof}

\begin{proposition}[Convergence and Existence]\label{MainConvProp}
Provided \eqref{ass} holds, there exist $(\nu, \lambda, w) \in ({\mathscr Y}^{p}(\O;\R^d))^{[0,T]}\times W^{1,q}([0,T]; L^q(\Omega, \R^{d+1})) \times L^r([0,T], W^{1,r}(\Omega))$ and a sequence $\tau \to 0$ such that
\begin{align}
&\bar{\lambda}_\tau \stackrel{*}{\rightharpoonup} \lambda \, \, \, \text{ in $L^\infty([0,T]; L^q(\Omega, \R^{d+1}))$}
\, \, \, \text{and} \, \, \, \bar{\lambda}_\tau(t) \rightharpoonup \lambda(t) \, \, \, \text{$\forall t \in [0,T]$ in $L^q(\Omega; \R^{d+1})$}, \label{lambdaconv} \\
&\DT{\lambda}_\tau \rightharpoonup \DT{\lambda} \, \, \, \text{ in $L^q(Q; \R^{d+1})$}, \label{dotlambdaconv} \\
&\bar{w}_\tau \rightharpoonup w \, \, \, \text{$L^r([0,T]; W^{1,r}(\Omega))$, $ \textstyle r < \frac{d+2}{d+1}$}\, \, \, \text{and} \, \, \, \bar{w}_\tau \to w \, \, \, \text{in $L^{1}(Q)$}, \label{enthalpyconv}\\
& \bar{m}_\tau \rightharpoonup m \, \, \, \text{ in $L^2(Q; \R^{d})$}, \label{idmomentconv} \\
&L \bulet \bar{\nu}_\tau \to L \bulet \nu \, \, \, \text{ in $L^2([0,T]; H^{-1}(\Omega; \R^{d+1}))$}. \label{Lmomentconv}
\end{align}
Moreover, for each $t \in [0,T]$ there exists a subsequence $\tau_{k(t)}$ such that
\begin{equation}
\bar{\nu}_{\tau_{k(t)}}(t) \stackrel{*}{\rightharpoonup} \nu(t) \, \, \, \text{ in $L^\infty_\mathrm{w}(\Omega; \mathcal{M}(\R^d))$}. \label{YoungMeasConv}
\end{equation}
Every $(\nu, \lambda, w)$ obtained in this way is then is a weak solution of
\eqref{ContinuousSystem}.
\end{proposition}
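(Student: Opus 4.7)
The plan proceeds in three stages: first extract convergent subsequences from the a-priori estimates of Proposition~\ref{aprioriII}, then pass to the limit in each of the four items of Definition~\ref{DefinOfSol} separately, leaving the enthalpy equation for last as the main obstacle.

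\textbf{Stage 1 (compactness).} The weak convergences \eqref{lambdaconv} and \eqref{dotlambdaconv} follow directly from \eqref{LambdaApriori}, with the pointwise-in-$t$ statement obtained from the uniform H\"{o}lder continuity of $\bar\lambda_\tau:[0,T]\to L^q(\O;\R^{d+1})$ implied by $\DT\lambda_\tau\in L^q(Q;\R^{d+1})$. The weak $L^r(W^{1,r})$ part of \eqref{enthalpyconv} is \eqref{EnthalpyLinftyL1+}, and strong $L^1(Q)$ convergence of $\bar w_\tau$ follows from Aubin-Lions combining \eqref{EnthalpyLinftyL1+} with \eqref{DualEnthalpyEst} through the chain $W^{1,r}(\O)\Subset L^1(\O)\hookrightarrow W^{1,\infty}(\O)^*$. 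The $p$-moment bound \eqref{Nu-Apriori} gives, for each fixed $t$, relative weak*-compactness of $\{\bar\nu_\tau(t)\}$ in ${\mathscr Y}^p(\O;\R^d)$, producing the $t$-dependent subsequence \eqref{YoungMeasConv}; as a by-product, $p>4>2$ yields \eqref{idmomentconv}. For the strong convergence \eqref{Lmomentconv} I combine $p>4$ with Jensen's inequality applied to $|L|^2=|\cdot|^2+|\cdot|^4$ to obtain a uniform $L^\infty([0,T];L^2(\O;\R^{d+1}))$ bound on $L\bulet\bar\nu_\tau$; Aubin-Lions then yields $\bar\lambda_\tau\to\lambda$ strongly in $C([0,T];H^{-1})$, the energy bound gives $\|\bar\lambda_\tau-L\bulet\bar\nu_\tau\|_{L^\infty(H^{-1})}\le C$, and the compact embedding $L^2(\O)\Subset H^{-1}(\O)$ together with a dominated-convergence argument in $t$ delivers \eqref{Lmomentconv}.

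\textbf{Stage 2 (easy limit passages).} The reduced Maxwell system \eqref{maxwellSysWeak} passes from \eqref{MaxwellDis} trivially by linearity and \eqref{idmomentconv}. The flow rule \eqref{FlowRule} is obtained by sending $\tau\to0$ in the convex variational inequality \eqref{FlowRuleDisI} tested by an arbitrary $v\in L^q(Q;\R^{d+1})$: weak lower semicontinuity handles $\delta_S^*(\DT\lambda)+\tfrac\e q|\DT\lambda|^q$; a.e.\ convergence of $\underline w_\tau$ (from the strong $L^1(Q)$ convergence of $\bar w_\tau$) with continuity of $\mathcalI$ handles the coupling $(\mathcalI(\underline w_\tau)-\theta_{\rm c})\ccoupl$; the strong convergence \eqref{Lmomentconv} combined with \eqref{lambdaconv} handles the nonlocal penalty $\llangle\bar\lambda_\tau-L\bulet\bar\nu_\tau,\cdot\rrangle$; and the regularizer $2q\tau|\bar\lambda_\tau|^{2q-2}\bar\lambda_\tau$ disappears since $\tau\|\bar\lambda_\tau\|^{2q-1}_{L^{2q}(\O;\R^{d+1})}\le C\tau^{1/(2q)}\to0$ by \eqref{lambdaBlowUp}. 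The minimization principle \eqref{balanceLaw} is verified pointwise: for the given $t$, extract the $t$-dependent subsequence \eqref{YoungMeasConv}, use weak lower semicontinuity of $\nu\mapsto\Gm(t,\nu,\lambda(t),\mathcalI(w(t)))$ on the left (the penalty $\tfrac\varkappa2\|\lambda(t)-L\bulet\nu\|^2_{H^{-1}(\O;\R^{d+1})}$ is convex and strongly $H^{-1}$-continuous in its first slot, and $\phi\bulet\nu$ together with the magnetostatic contribution are classical lsc quantities along weak* Young-measure convergence), while the competitor $\tilde\nu$ on the right is fixed in $\tau$ so only strong $H^{-1}$ convergence of $\bar\lambda_\tau(t)$ is needed.

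\textbf{Stage 3 (enthalpy equation -- the main obstacle).} Passing to the limit in $\mathcal K(\bar\lambda_\tau,\bar w_\tau)\nabla\bar w_\tau$ is routine from a.e.\ convergence of $(\bar\lambda_\tau,\bar w_\tau)$, the continuity and boundedness assumption \eqref{Sub6Growth}, and weak $L^r$ convergence of $\nabla\bar w_\tau$. The boundary term and the calorimetric term $\mathcalI(\bar w_\tau)\ccoupl{\cdot}\DT\lambda_\tau$ are handled by Vitali's theorem, with the required equi-integrability of $\mathcalI(\bar w_\tau)$ coming from a Sobolev-embedding interpolation between \eqref{EnthalpyLinftyL1} and \eqref{EnthalpyLinftyL1+} yielding $\bar w_\tau\in L^{1+\delta}(Q)$ for some $\delta>0$. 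The crux is upgrading $\DT\lambda_\tau\rightharpoonup\DT\lambda$ to \emph{strong} convergence in $L^q(Q;\R^{d+1})$, which is indispensable for passing to the limit in the dissipation-heat source $\e|\DT\lambda_\tau|^q$. I will obtain this by testing the discrete flow rule by $\DT\lambda_\tau$ itself and the limit flow rule by $\DT\lambda$, then comparing the two energy identities via the Minty/Browder trick on the uniformly convex part $\tfrac\e q|\cdot|^q$; the strong $H^{-1}$ convergence of $\bar\lambda_\tau$ (from Aubin-Lions) together with \eqref{Lmomentconv} controls the nonlocal penalty in the limsup identification, while continuity of $\mathcalI$ and the strong $L^1(Q)$ convergence of $\bar w_\tau$ handle the coupling term. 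Once strong convergence of $\DT\lambda_\tau$ is secured, every remaining term in \eqref{EnthalpyEqDis} passes to its counterpart in \eqref{EnthalpyEq}, and the identified triple $(\nu,\lambda,w)$ is a weak solution in the sense of Definition~\ref{DefinOfSol}.
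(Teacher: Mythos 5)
Your proposal follows the same broad three-stage structure as the paper's proof (Step~1 subsequence selection, Steps~2 and 4 limit passages in the minimization and flow rule, Steps~5--6 strong convergence of $\DT\lambda_\tau$ via a Minty-type argument and limit in the enthalpy equation). However there is a genuine gap at the very point where the paper's proof becomes non-routine: you never establish that the map $t\mapsto L\bulet\nu(t)$ (and likewise $t\mapsto\nabla u_{m(t)}$) is \emph{measurable}, nor that it is independent of the $t$-dependent subsequence $\tau_{k(t)}$ used to obtain $\nu(t)$ in \eqref{YoungMeasConv}.

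Concretely, in Stage~1 you extract, by Banach's selection, a weak $L^2(Q;\R^{d+1})$ limit of $L\bulet\bar\nu_\tau$ and a weak $L^2(Q;\R^d)$ limit of $\bar m_\tau$; these objects are measurable in $t$ by construction. Separately, for each fixed $t$ you select a $t$-dependent subsequence along which $\bar\nu_{\tau_{k(t)}}(t)\stackrel{*}{\rightharpoonup}\nu(t)$. Nothing in your argument ties these two constructions together. The minimizer $\nu(t)$ of \eqref{balanceLaw} need not be unique, different $t$-dependent subsequences may produce different $\nu(t)$, and in particular the assignment $t\mapsto L\bulet\nu(t)$ need not agree with the (measurable) space-time weak limit. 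Your claimed chain ``compact embedding $L^2\Subset H^{-1}$ together with a dominated-convergence argument in $t$ delivers \eqref{Lmomentconv}'' therefore does not close: dominated convergence requires pointwise convergence of the \emph{whole} sequence to a \emph{measurable} limit, and you have neither. This is exactly what the paper's Step~3 supplies: the strict convexity of the magnetic part $\mathfrak{G}(t,\cdot,\lambda(t))$ in $\lambda-L\bulet\nu$ and $\nabla u_m$ forces these two quantities to be unique at every minimizer, even though $\nu(t)$ itself is not unique. That uniqueness upgrades the $t$-dependent subsequence convergence to convergence of the whole sequence at each $t$, and identifies $L\bulet\nu$ and $\nabla u_m$ with the measurable space-time limits from Step~1. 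Without this argument, \eqref{Lmomentconv} and the identification $m=\id\bulet\nu$ in \eqref{idmomentconv} are unsupported, and consequently your limit passage in the flow rule (which invokes \eqref{Lmomentconv}) and your Minty argument (which needs $L\bulet\nu$ in the continuous flow rule to be the correct, measurable object) both fail to be justified.

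A secondary slip: in Stage~3 you say you test the discrete flow rule ``by $\DT\lambda_\tau$ itself'' and the limit flow rule by $\DT\lambda$. Both of these tests are trivial (the variational inequality collapses to $0\le0$). The useful tests are the cross-tests: discrete inequality by a suitable piecewise-constant approximation of $\DT\lambda$ and the limit inequality by $\DT\lambda_\tau$, as the paper does in Step~5 (with the approximation $\DT\lambda_{\mathrm{S},\tau}$ chosen with the controlled $L^{2q}$-blow-up needed to kill the regularizer). This is presumably what you intended, but as written the step would produce nothing.
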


\begin{remark}
\upshape
Note that the Young measure $\nu$ obtained by \eqref{YoungMeasConv} surely \emph{does not need to be measurable as a function of time}. However exploiting the convexity of the magnetic part of the Gibbs free energy and, in particular, its \emph{strict convexity} in the moments $m \equiv \mathrm{id}\bulet \nu$ and $L \bulet \nu$ we shall prove that, in contrast to $\nu$, the moments $m$ and $L \bulet \nu$ \emph{are measurable in time}.
\end{remark}

\begin{proof}[Proof of Proposition~\ref{MainConvProp}]
Again for lucidity, we divide the proof into six steps.
\vspace{1ex}

\noindent \emph{Step 1: Selection of subsequences}:
{
By the a-priori estimates proved in Proposition \ref{aprioriII} we may find a sequence of $\tau$'s and $(\lambda,w) \in (W^{1,q}([0,T]; L^q(\Omega, \R^{d+1})) \times L^r([0,T], W^{1,r}(\Omega))$ such that \eqref{lambdaconv}--\eqref{enthalpyconv} hold. Moreover, there exists $\Xi \in L^2(Q; \R^{d+1})$ and $\tilde{h} \in L^(2[0,T]{\times}\R^d; \R^{d})$ such that
\begin{align}
L \bulet \bar{\nu}_\tau &\rightharpoonup \Xi && \text{ in $L^2(Q; \R^{d+1})$} \label{LBulNuConv}, \\
\nabla u_{\bar{m}_\tau} &\rightharpoonup \tilde{h} && \text{ in $L^2([0,T]{\times}\R^d; \R^{d})$}. \label{TildeH}
\end{align}
Indeed, by \eqref{LambdaApriori} we know that ${\lambda}_{\tau} $ is  (considering also the integrability of the initial 
condition, cf.\ (\ref{ass}g)) bounded in $W^{1,q}([0,T]; L^q(\Omega; \R^{d+1}))$ and hence converges weakly to some $\lambda$ in this space (for the subsequence selected). As $W^{1,q}(I;L^q(\O,\R^{d+1}))\subset C([0,T];L^q(\Omega, \R^{d+1}))$ we also get that
${\lambda}_{\tau}(t)\rightharpoonup\lambda(t)$ for all $t \in [0,T]$ in $L^q(\Omega, \R^{d+1})$. 

Moreover, $\bar{\lambda}_{\tau}$ converges weakly in ${L^q(Q; \R^{d+1})}$ to $\lambda$ as well, because $\|\lambda_\tau -\bar{\lambda}_\tau \|_{L^q(Q; \R^{d+1})} \leq \tau \|\DT{\lambda}\|_{L^q(Q; \R^{d+1})} \to 0$. Further, again by \eqref{LambdaApriori}, $\bar{\lambda}_\tau$ is bounded in $L^\infty([0,T]; L^q(\Omega; \R^{d+1}))$ and also $\mathrm{BV}([0,T]; L^q(\Omega; \R^{d+1}))$. Hence, by making use of a slight modification of Helly's theorem \cite{mielke-diff, mielke-roubicek-stefanelli} $\bar{\lambda}_\tau(t) \rightharpoonup \lambda(t)$ for all $t \in [0,T]$ in $L^q(\Omega;\R^{d+1})$, too. Note also, that due to the fact that $q \geq 2$ and the compact embedding $L^2(\Omega; \R^{d+1}) \Subset H^{-1}(\Omega; \R^{d+1})$, $\bar{\lambda}_\tau(t) \to \lambda(t)$ strongly in $H^{-1}(\Omega; \R^{d+1})$ for all $t \in [0,T]$. 

For this sequence of $\tau$'s, $L \bulet \bar{\nu}_\tau$ is bounded in $L^2(Q; \R^{d+1})$ due to \eqref{Nu-Apriori}; therefore \eqref{LBulNuConv} follows just by Banach's selection principle. Similarly, also $ \bar{m}_\tau$ is bounded in $L^2(Q; \R^{d})$ due to \eqref{Nu-Apriori} and hence, by standard theory for the elliptic equation \eqref{MaxwellDis}, $\nabla u_{\bar{m}_\tau}$ is bounded in $L^2([0,T] \times\R^d; \R^{d})$
; this readily gives \eqref{TildeH}.
}
Due to \eqref{EnthalpyLinftyL1}, $\bar{w}_\tau $ is bounded in $L^{r}([0,T]; W^{1,r}(\Omega))$, $r<\frac{d+2}{d+1}$ and therefore converges weakly to some $w$ in this space. Having the dual estimate on the time derivative of $w_\tau$ \eqref{DualEnthalpyEst}, we exploit the Aubin-Lions-lemma generalized for
measure-valued derivatives (see \cite{necas})
to get that $\bar{w}_\tau $ converges \emph{even strongly} to $w$ in $L^{(d+2)/(d+1)-\beta}([0,T]; W^{1-\beta,(d+2)/(d+1)-\beta}(\O))$ and after interpolation $L^{\frac{d+2}{d}-\beta}(Q)$ for any $\beta>0$ small, so that the traces converge strongly in $L^{(d+2)/(d+1)-\beta}([0,T];L^{(d^2+d-2)/(d^2-2)-\beta}(\Gamma))$; for any small $\beta > 0$ cf.~\cite[Formulae (4.42) and (4.55)]{tr1}.
Moreover the estimate \eqref{DualEnthalpyEst} assures that $\underline{w}_\tau$, cf.~\eqref{w-backward}, converges strongly in $L^{\frac{d+2}{d}-\beta}(Q)$ \emph{to the same limit} as $\bar{w}_\tau$.

Thanks to the growth condition $|\mathcalI(w)| \leq |w|^{1/\omega}$ and assumption \eqref{Sub6Growth} we have that $|\mathcalI(w)| \leq |w|^{1/\omega} \leq |w|^{1/q'}$. Hence exploiting the continuity of Nemytskii mappings in Lebesgue spaces and using continuity of $\mathcalI$, we have that $\mathcalI(\bar{w}_\tau) \to \mathcalI(w)$ in $L^{q'}(Q)$. Similarly also $\mathcalI(\underline{w}_\tau) \to \mathcalI(w)$ in $L^{q'}(Q)$.

Now, take any $t \in [0,T]$ arbitrary but fixed. Then due to the bound \eqref{Nu-Apriori} select a subsequence $\tau_{k(t)}$ such that $\bar\nu_{\tau_{k(t)}}(t) \rightharpoonup \nu(t)$ in $L^\infty_\mathrm{w}(\Omega; \mathcal{M}(\R^d))$. As indicated by the index $k=k(t)$, this selection \emph{may depend on time $t$}. Estimate \eqref{Nu-Apriori} then also assures that $\nu$ is a Young measure. Since the growth of $L$ is strictly smaller than $p$ it holds also that $L \bulet \bar\nu_{\tau_{k(t)}}(t) \rightharpoonup L \bulet \nu(t)$ in $L^2(\Omega; \R^{d+1})$ and therefore also $L \bulet \bar\nu_{\tau_{k(t)}}(t) \to L \bulet \nu(t)$ strongly in $H^{-1}(\Omega; \R^{d+1})$. Similarly $\nabla u_{\mathrm{id} \bulet \bar\nu_{\tau_{k(t)}}(t)} \rightharpoonup \nabla u_{\mathrm{id} \bulet \nu(t)}$ in $L^2(\Omega; \R^{d})$. Note that this does not necessarily imply that $L \bulet \nu = \Xi$ or that $\nabla u_{\mathrm{id} \bulet \nu} =\tilde{h}$ (where $\Xi$ and $\tilde{h}$ were defined above).
\vspace{1ex}

\noindent \emph{Step 2: Minimization principle}:
Let $t$ be still fixed. A direct consequence of \eqref{BalanceEqDis} is the
\emph{discrete minimization principle} that reads as
\begin{equation}
\mathfrak{G} (t_{\tau_{k(t)}}, \bar\nu_{\tau_{k(t)}}(t),
\bar\lambda_{\tau_{k(t)}}(t))\le
\mathfrak{G}(t_{\tau_{k(t)}},\hat{\nu}, \bar\lambda_{\tau_{k(t)}}(t)),
\end{equation}
for any $\hat{\nu} \in {\mathscr Y}^{p}(\O;\R^d)$ with $\mathfrak{G}$ defined in \eqref{def-of-G-mag}. Here we denoted $t_{\tau_{k(t)}} = l \cdot {\tau_{k(t)}}$, where $l = \min_{s \in \mathbb{N}}\{t \leq s{\tau_{k(t)}}\}$. Applying $\liminf_{\tau_{k(t)} \to 0}$ on both sides and using the definition of the magnetic part of the Gibbs free energy we get that
\begin{align*}
&\liminf_{\tau_{k(t)} \to 0} \mathfrak{G} \big(t_{\tau_{k(t)}}, \bar\nu_{\tau_{k(t)}}(t), \bar\lambda_{\tau_{k(t)}}(t)\big)
\leq \lim_{\tau_{k(t)} \to 0} \mathfrak{G}\big(t_{\tau_{k(t)}},\hat{\nu},\bar\lambda_{\tau_{k(t)}}(t)\big)
\\ &\qquad= \lim_{\tau_{k(t)} \to 0} \int_\Omega\phi \bulet \hat{\nu} -h(t_{\tau_{k(t)}}){\cdot}\hat{m}\, \dd x + \big\|\bar\lambda_{\tau_{k(t)}}(t)-L\bulet\hat{\nu}\big\|_{H^{-1}(\Omega; \R^{d+1})}^2 =\, \mathfrak{G}(t,\hat{\nu}, \lambda(t)),
\end{align*}
where $\hat{m} \equiv \id \bulet \hat{\nu}$ because of the continuity of $h$ and the strong convergence of $\bar{\lambda}_\tau(t)$ in $H^{-1}(\Omega; \R^{d+1})$. As to the left-hand side, because of the boundedness from below of $\phi$, we may estimate
\begin{align*}
&\int_\Omega \big(\phi \bulet \nu(t) -h(t){\cdot}m
(t) \big)\,\dd x + K\big\|\lambda(t) - L \bulet \nu(t)\big\|_{H^{-1}(\Omega; \R^{d+1})}^2 \\ &\qquad\le\liminf_{\tau_{k(t)} \to 0} \int_\Omega\phi\bulet\bar{\nu}_{\tau_{k(t)}}(t)-h(t_{\tau_{k(t)}}){\cdot}\bar{m}_{\tau_{k(t)}}(t)
\,\dd x +
K\big\|\bar\lambda_{\tau_{k(t)}}(t)-L\bulet\bar{\nu}_{\tau_{k(t)}}(t)
\big\|_{H^{-1}(\Omega; \R^{d+1})}^2,
\end{align*}
which already gives the sought minimization principle.
\vspace{1ex}

\noindent \emph{Step 3: Measurability of $L \bulet \nu$ and $\nabla u_m(\cdot)$, strong convergence of $L \bulet \nu$}:
Let $t$ remain fixed as in Step~2. Then, the convexity of $\mathfrak{G}$, the convexity of the space of Young measures and the strict convexity of $\mathfrak{G}$ in the terms $\lambda{-}L\bulet\nu$ and $\nabla u_{m}$, used for $m=\mathrm{id} \bulet \nu$ with $\nu$ corresponding to some minimizer of \eqref{balanceLaw} and $\lambda=\lambda(t)$ already selected in Step~1, ensures that both $\lambda{-}L\bulet\nu$ and $\nabla u_{m}$ are determined uniquely. Then, since $\lambda(t)$ is fixed, also $L\bulet\nu$ is determined uniquely, although the minimizer $\nu$ does not need to be.

In turn it means that $L\bulet\bar{\nu}_\tau(t)\rightharpoonup
L\bulet\nu(t)$ in $L^2(\Omega;\R^{d+1})$ and $\nabla u_{\bar{m}_\tau(t)}\rightharpoonup \nabla u_{m)}$ in $L^2(\Omega;\R^{d})$ for the sequence of $\tau$'s already selected in Step 1. Hence, by usage of the Lebesgue dominated convergence theorem, $L \bulet \nu(t) = \Xi(t)$ and $\tilde h = \nabla u_{m (t)}$ for a.a. $t \in [0,T]$. This in particular shows that both $L \bulet \nu$ and $\nabla u_ {m(t)}$ are measurable.

Note that the above implies also $\|L \bulet \bar{\nu}_\tau(t) - L \bulet \nu(t)\|_{H^{-1}(\Omega; \R^{d+1})} \to 0$ for all $t \in [0,T]$. Realizing moreover that $\sup_{t \in [0,T]} \|L \bulet \bar{\nu}_\tau(t) - L \bulet \nu(t) \|^2_{H^{-1}(\Omega; \R^{d+1})} \dd t$ is bounded by a constant due to \eqref{Nu-Apriori} we may use \emph{Lebesgue dominated convergence theorem} to get \eqref{Lmomentconv}.
\vspace{1ex}

\noindent \emph{Step 4: Convergence of the flow rule}:
We are now in the position to pass to the limit in the discrete flow rule,
i.e. \ \eqref{FlowRuleDisI} but integrated to $T$ instead of $t_k$.
First choose a test function $v \in L^q(Q;\R^{d+1})$ and consider its piecewise constant approximations $v_\tau$ such that $v_\tau \to v$ strongly on $L^q(Q;\R^{d+1})$ and moreover $\|v_\tau \|_{L^{2q}(Q; \R^{d+1})} \leq C \tau^{-1/(2q+1)}$.
By convexity of $\delta_S^*(\cdot) +\frac{\epsilon}{q} |\cdot|^{q}$, we get that
\begin{equation*}
\int_Q \big(\delta_S^*(\DT{\lambda}) +\frac{\epsilon}{q} |\DT{\lambda}|^{q} \big) \dd x \dd t \leq \liminf_{\tau \to 0} \int_Q \big(\delta_S^*(\DT{\lambda}_\tau) +\frac{\epsilon}{q} |\DT{\lambda}_\tau|^{q} \big) \dd x \dd t + \int_\Omega \tau |\bar{\lambda}_\tau(T)|^{2q} \dd x.
\end{equation*}

As to the convergence right-hand-side of \eqref{FlowRuleDisI}, we use that $\mathcalI(\underline{w}_\tau) \to \mathcalI(w)$ in $L^{q'}(Q)$ to pass to the limit in $\int_Q \mathcalI(\underline{w}_\tau) {-}\theta_{\rm c})\ccoupl{\cdot}(v_\tau{-}\DT{\lambda}_\tau)\dd x\dd t$
and \eqref{lambdaconv} as well as \eqref{Lmomentconv} to establish the convergence of $\int_0^{T}\varkappa\llangle \bar{\lambda}_\tau{-}L \bulet \bar{\nu}_\tau ,v_\tau {-} \DT{ \lambda}_\tau \rrangle \dd t$. Since $\|\lambda_{0,\tau}\|_{L^{2q}(\Omega; \R^{d+1})}\leq C \tau^{-1/(2q+1)}$ the term $ \tau \int_\Omega |\lambda_0|^{2q} \dd x$ converges to zero. Similarly $2q \tau |\bar{\lambda}_\tau|^{2q-2}\bar{\lambda}_\tau v_\tau$ can be pushed to zero thanks to \eqref{lambdaBlowUp} and the blow-up for $v_\tau$ specified above that allow us to estimate $|\int_Q 2q \tau |\bar{\lambda}_\tau|^{2q-2}\bar{\lambda}_\tau v_\tau|\dd x\dd t
\leq 2q \tau \| \bar{\lambda}_\tau\|^{2q-1}_{L^{2q}(Q; \R^{d+1})} \|v_\tau\|_{L^{2q}(Q; \R^{d+1})} \leq C \tau^{\frac{1}{4q^2+2q}}$.
Altogether, applying $\liminf_{\tau \to 0}$ to both sides of the discrete flow rule, we get
\begin{align}
\int_Q \delta_S^*( \DT{ \lambda})+\frac{\epsilon}{q} |\DT{\lambda}|^q \dd x \dd t
\leq \int_0^{T} \bigg( &\varkappa\llangle \lambda{-}L \bulet \nu,v {-}\DT{ \lambda}\rrangle \nonumber 
\\ &+ \int_\Omega \Big((\mathcalI(w){-}\theta_{\rm c})\ccoupl{\cdot}(v{-}\DT{\lambda})+\delta_S^*(v)+\frac{\epsilon}{q}|v|^q \Big)\dd x \bigg) \dd t,
\label{FlowRuleConverged}
\end{align}
for any $v \in L^q(Q; \R^{d+1})$.
\vspace{1ex}

\noindent \emph{Step 5: Strong convergence of $\DT{\lambda}_\tau$}:
First test the discrete flow rule (cf.\ \eqref{FlowRuleDisI}, \emph{reformulated using the convexity of $|\cdot|^q$}) by  $\DT{\lambda}_{\mbox{\tiny S},\tau}$ being a piecewise constant approximation of the function $\DT{\lambda}$ such that $\DT{\lambda}_{\mbox{\tiny S},\tau} \to \DT{\lambda}$ strongly in $L^q(Q; \R^{M+1})$ and moreover $\|\DT{\lambda}_{\mbox{\tiny S},\tau} \|_{L^{2q}(Q; \R^{M+1})}\leq C \tau^{-1/(2q+1)}$. We get
\begin{align}
&\int_Q \delta_S^*( \DT{ \lambda}_\tau) \dd x\dd t +\int_\Omega \tau |\bar{\lambda}_\tau(T)|^{2q} \dd x \leq \int_\Omega \tau |\lambda_0|^{2q} \dd x
+ \int_0^{T}\!\!\bigg(\varkappa\llangle\bar{\lambda}_\tau-L \bulet\bar{\nu}_\tau,
\DT{\lambda}_{\mbox{\tiny S},\tau}-\DT{ \lambda}_\tau \rrangle \nonumber \\ &
+ \int_\Omega\! \Big(\delta_S^*(\DT{\lambda}_{\mbox{\tiny S},\tau}) + \epsilon |\DT{\lambda}_\tau|^{q-2}\DT{\lambda}_\tau{\cdot}(\DT{\lambda}_{\mbox{\tiny S},\tau} {-} \DT{\lambda}_\tau)+ \big(\mathcalI(\underline{w}_\tau){-}\theta_{\rm c}\big) \ccoupl {\cdot}(\DT{\lambda}_{\mbox{\tiny S},\tau} {-} \DT{\lambda}_\tau)+ 2q\tau |\bar{\lambda}_\tau|^{2q-2} \bar{\lambda}_\tau{\cdot}\DT{\lambda}_{\mbox{\tiny S},\tau}\Big)\dd x\bigg) \dd t .
\label{TestStrongConv1}
\end{align}
Symmetrically we test the continuous flow rule reformulated as above
by $\DT{\lambda}_\tau$ to get
\begin{align}\nonumber
\int_Q \delta_S^*( \DT{ \lambda})\dd x \dd t &\leq \int_0^{T}\!\!\bigg(\varkappa\llangle\lambda-L\bulet\nu,\DT{\lambda}_\tau-\DT{ \lambda}\rrangle
\\&\quad +\int_\Omega \Big(\epsilon |\DT{\lambda}|^{q-2}\DT{\lambda}{\cdot}(\DT{\lambda}_\tau-\DT{\lambda})+(\mathcalI(\bar{w})-\theta_{\rm c})\ccoupl{\cdot}(\DT{\lambda}_\tau-\DT{\lambda})+\delta_S^*(v) \Big) \dd x\bigg)\dd t.
\label{TestStrongConv2}
\end{align}
We add \eqref{TestStrongConv1} and \eqref{TestStrongConv2}, apply H\"older inequality and $\lim_{\tau \to 0}$ to estimate
\begin{align*}
\epsilon\lim_{\tau \to 0}&\Big(\|\DT{\lambda}_\tau\|_{L^q(Q; \R^{d+1})}^{q-1}-\|\DT{\lambda}\|_{L^q(Q; \R^{d+1})}^{q-1}\Big)\Big(\|\DT{\lambda}_\tau\|_{L^q(Q; \R^{d+1})} - \|\DT{\lambda}\|_{L^q(Q; \R^{d+1})}\Big)\\
&\le \lim_{\tau \to 0} \epsilon\int_Q\Big(|\DT{\lambda}_\tau|^{q-2}\DT{\lambda}_\tau-|\DT{\lambda}|^{q-2}\DT{\lambda}\Big)\cdot(\DT{\lambda}_\tau-\DT{\lambda}) \dd x \dd t \\ &\leq \lim_{\tau \to 0} \Bigg(\int_Q \Big(\underbrace{\epsilon |\DT{\lambda}_\tau|^{q-2}\DT{\lambda}_\tau(\DT{\lambda}_{\mbox{\tiny S},\tau}-\DT{\lambda})+\delta_S^*(\DT{\lambda}_{\mbox{\tiny S},\tau})-\delta_S^*(\DT{\lambda})}_\mathrm{(I)} \Big) \d x \d t \\ &+ \int_Q \Big( \underbrace{(\mathcalI(\underline{w}_\tau)-\theta_{\rm c})\ccoupl \cdot(\DT{\lambda}_{\mbox{\tiny S},\tau}\!- \DT{\lambda}_\tau)+\mathcalI(w)\ccoupl \cdot(\DT{\lambda}_\tau{-}\DT{\lambda})}_\mathrm{(II)} \Big) \dd x \dd t + \int_0^T\!\!\!\underbrace{\varkappa\llangle \lambda{-}L \bulet \nu,\DT{\lambda}_\tau{-}\DT{\lambda}\rrangle}_\mathrm{(III)} \d t \\&+ \int_0^T \Big( \underbrace{\varkappa\llangle \lambda_\tau-L \bulet \bar{\nu}_\tau,\DT{\lambda}_{\mbox{\tiny S},\tau} - \DT{\lambda}_\tau\rrangle}_\mathrm{(IV)} + \int_\Omega \underbrace{2q \tau |\bar{\lambda}_\tau|^{2q-2} \bar{\lambda}_\tau
\DT{\lambda}_{\mbox{\tiny S},\tau}}_{\mathrm{(V)}} \dd x \Big) \dd t + \int_\Omega \underbrace{\tau |\lambda_0|^{2q}}_{\mathrm{(VI)}} \Bigg) \leq 0.
\end{align*}
When passing to the limit on the right-hand-side use that $\DT{\lambda}_{\mbox{\tiny S},\tau} \to \DT{\lambda}$ in $L^q(Q; \R^{d+1})$ to limit Term (I) to 0.
For Term (II) we use the convergences established in Step~1, and the convergence
of $\mathcalI(\underline{w}_\tau) \to \mathcalI(w)$ in $L^{q'}(Q; \R^{d+1})$ to see
its limit being 0. When turning to Term (III), \eqref{dotlambdaconv} needs to
be applied to find this term approaching again 0. Term (IV) converges to 0 by
applying \eqref{lambdaconv} and \eqref{Lmomentconv} combined with
$\DT{\lambda}_{\mbox{\tiny S},\tau} \to \DT{\lambda}$ in $L^q(Q; \R^{d+1})$.
Term (V) can be pushed to 0 similarly as when converging the flow rule using
the available blow-up conditions, Term (VI) converges also to 0 by exploiting
\eqref{lambdaZeroApprox}. Passing then to the limit and using all above said,
we arrive at
$\|\DT{\lambda}_\tau\|_{L^q(Q; \R^{d+1})}\to\|\DT\lambda\|_{L^q(Q; \R^{d+1})}$. Hence,
by the local convexity of $L^q(Q; \R^{d+1})$, the
already proved weak convergence $\DT{\lambda}_\tau \to \DT{\lambda}$ in
$L^q(Q;\R^{d+1})$ turns to be strong.
\vspace{1ex}

\noindent \emph{Step 6: Convergence of the enthalpy equation}:
Performing discrete by parts integration in \eqref{EnthalpyEqDis}) yields
\begin{align}
-\int_Q \big( \bar{w}_\tau \DT{\varphi} + \mathcal{K}(\bar{\lambda}_\tau, \bar{w}_\tau) \nabla \bar{w}_\tau \nabla \bar{\varphi} \big)\dd x + \int_{\Gamma} \bar{b}_\tau \mathcalI(\bar{w}_\tau)\bar{\varphi} \dd S \dd t \nonumber = \int_\Omega
w_{0,\tau}\bar{\varphi}(0) \dd x
\\ + \int_Q\! \big(\delta_S^*(\DT{\lambda}_\tau) \bar{\varphi} + \epsilon |\DT{\lambda}_\tau|^q \bar{\varphi} \big) \dd x \dd t+\int_Q\!\mathcalI(\bar{w}_\tau)\ccoupl{\cdot}\DT{\lambda}_\tau \bar{\varphi} \dd x+ \int_{\Gamma} \bar{b}_\tau \bar{\theta}_{\mathrm{ext}, \tau}\bar{\varphi} \dd S \dd t,
\end{align}
for all $\bar{\varphi}$ piecewise constant on the intervals $(t_{j-1}, t_j]$ such that $\bar{\varphi}(t_j,\cdot) \in H^1(\Omega)$, $\bar{\varphi}(T) = 0$ and $\varphi$ piecewise linear on the intervals $(t_{j-1}, t_j]$, such that $\bar{\varphi}(t_j,\cdot) = \varphi(t_j,\cdot)$. Note that by such test functions we may approximate (strongly in the norm of $L^p(Q), \, p\in[1,\infty]$) any $\tilde{\varphi} \in C^1(\bar{Q})$.

To make a limit passage for $\tau \to 0$ in this equation, we make use of \eqref{enthalpyconv} (and the approximation of $\varphi$ mentioned above) to handle the term $ \bar{w}_\tau \DT{\varphi}$. Then use that $\bar{\lambda}_\tau \to \lambda$ strongly in $L^q(Q; \R^{d+1})$, which together with \eqref{Sub6Growth} gives $\mathcal{K}(\bar{\lambda}_\tau, \bar{w}_\tau) \to \mathcal{K}(\lambda,w)$ strongly in any Lebesgue space, except for $L^\infty(Q; \R^{d \times d})$.
Exploit also that $\bar{w}_\tau\to w$ strongly in $L^{(d+2)/(d+1)-\beta}([0,T]; W^{1-\beta,(d+2)/(d+1)-\beta}(\O))$
for any $\beta>0$ small (as shown in Step 1), so that the traces converge strongly in $L^{(d+2)/(d+1)-\beta}([0,T];L^{(d^2+d-2)/(d^2-2)-\beta}(\Gamma))$.
Combining that with \eqref{OmegaGrowthBound} allows us to handle the left-hand-side boundary term.
For the right-hand-side we exploit the strong convergence $\DT{\lambda}_\tau \to \DT{\lambda}$ in $L^q(Q; \R^{d+1})$ to the limit in the terms expressing dissipated heat. For the term $\mathcalI(\bar{w}_\tau)\ccoupl {\cdot} \DT{\lambda}_\tau \bar{\varphi}$ exploit $\mathcalI(\bar{w}_\tau) \to \mathcalI(w)$ in $L^{q'}(Q)$, for the right-hand-side boundary term we have weak convergence of $\bar{b}_\tau \bar{\theta}_{\mathrm{ext}, \tau} \rightharpoonup b\theta_\mathrm{ext}$ in $L^1(\Sigma)$, which is enough to establish the limit of this term.
\end{proof}

\section{Less dissipative modification of the model}
% ~~~~~~~~~~~~~~~~~~~~~~~~~~~~~~~~~~~~~~~~~~
In some cases it might be advantageous to generalize the presented model in
the following way: assume that only one part of the internal parameter $\lambda$, which we denote $A \lambda$, to be subjected to rate-dependent dissipation as before while the other part $\lambda - A \lambda$ evolves purely rate-independently; here $A: \R^{d+1} \to \R^{d+1}$ is a linear \emph{projection}. Assume moreover that the coupling between the magnetic and thermic part is realized \emph{only through the rate-dependent part of the vector of volume fractions}, i.e. ${\rm Ker}\,A\subset{\rm Ker}(\ccoupl\cdot)$. We alter naturally the dissipation potential
\begin{equation}
\label{form-of-R1} R(\DT{\lambda}):=\int_\O\delta_S^*\big(\DT{\lambda}\big)+
\frac\epsilon{q}\big|A\DT{\lambda}\big|^q\,\d x.
\end{equation}
We further suppose that the rate-independent part can be split as
$\delta_S^*(\DT{\lambda}) =\delta_{S_2}^*(A\DT{\lambda})
+\delta_{S_1}^*(A^\perp\DT{\lambda})$ with
$A^\perp\DT{\lambda}:=\DT{\lambda}-A\DT{\lambda}$,
and leave the Gibbs free energy unchanged as in \eqref{def-of-psi}. The evolution of the system is then again governed by \eqref{ContinuousSystem}.
Now, however, the system hosts a rate-independent process in evolution
of the component $A^\perp\lambda$.
In the spirit of \cite{tr1} we can formulate this system weakly by exploiting
the concept of \emph{semi-stability} combined with
energy balance:

\begin{definition}
\label{defOfSol-rem}
We call the triple $(\nu, \lambda, w) \in ({\mathscr Y}^p(\Omega; \R^d))^{[0,T]}\times \mathrm{BV}([0,T]; L^1(\Omega; \R^{d+1})) \times
L^1([0,T]; W^{1,1}(\Omega))$ such that $m \equiv \mathrm{id} \bulet \nu \in L^2(Q; \R^d)$, $L\bulet\nu\in L^{2}(Q; \R^{d+1})$ and
$A\DT{\lambda} \in L^q(\Omega; \R^{d+1})$ a \emph{weak solution}
to \eqref{ContinuousSystem1} with $\epsilon|\DT{\lambda}|^{q-2}\DT{\lambda}$
in \eqref{FlowRuleBasic1} and $\epsilon|\DT{\lambda}|^q$ in
\eqref{EnthalpyEquation} replaced respectively
by $\epsilon|A\DT{\lambda}|^{q-2}A\DT{\lambda}$
and $\epsilon|A\DT{\lambda}|^q$
if it
satisfies:
\begin{enumerate}
\item The {\bf semistability}: For all $\tilde \nu$ in
${\mathscr Y}^{p}(\O;\R^d)$, all $\tilde \lambda \in H^{-1}(\Omega;\R^{d+1})$
such that $A\tilde{\lambda} = 0$ and all $t\in[0,T]$
\begin{equation}
\Gm(t,\nu,A^\perp\lambda,\mathcalI(w))
\le\Gm(t,\tilde\nu,\tilde\lambda,\mathcalI(w))
+\int_\Omega\delta_{S_1}^*(A^\perp(\tilde{\lambda}{-}\lambda))\,\dd x.
\label{Rem-balanceLaw}
\end{equation}
\item The {\bf rate dependent flow rule}:\eqref{FlowRule} with $A\DT{\lambda}$ instead of $\DT{\lambda}$ and $Av$ instead of $v$.
\item The {\bf total energy balance}
\begin{align}
\mathfrak{G}(T,\nu(T),\lambda(T))+\int_{\Sigma}b\mathcalI(w)\,\dd S \dd t
\leq \mathfrak{G}(0, \nu_0,\lambda_0)+ \int_\Omega
w_0\dd x \nonumber \\+\int_{\Sigma}\theta_{\mathrm{ext}}\,\dd S\dd t + \int_0^T\!\!\!\mathfrak{G}'_t(t,\nu_u,\bar{\lambda}_\tau)\,\dd t.
\end{align}
\item The {\bf enthalpy equation}: \eqref{EnthalpyEq} with
$\int_{\bar Q}\varphi\,\mathcal{H}(\dd x\dd t) +
\int_Q \delta_{S_2}^*(A\DT{\lambda}) \varphi\dd x\dd t$
instead of $\int_Q \delta_S^*(\DT{\lambda}) \varphi \dd x \dd t$
and $\int_Q \epsilon |A \DT{\lambda}|^q \varphi \dd x \dd t$
instead of $\int_Q \epsilon |\DT{\lambda}|^q \varphi \dd x \dd t$;
here we denoted $\mathcal{H} \in \mathcal{M}(\bar{Q})$ the measure
(=heat production rate by rate-independent dissipation)
defined by prescribing its values on every closed set $A = [t_1,t_2] \times B$, where $B \subset \O$ is a Borel set as
\begin{equation*}
\mathcal{H}(A) = \mathrm{Var}_{\delta_{S_1}^*}(\lambda|_{B}; t_1, t_2).
\end{equation*}
\item {\bf remaining initial conditions} and the {\bf reduced Maxwell system} \eqref{maxwellSysWeak}.
\end{enumerate}
\end{definition}

\begin{theorem}\label{thm-rem}
Let \eqref{ass} hold and $A$ be a projector $\R^{d+1}\to \R^{d+1}$.
Then at least one weak solution to \eqref{ContinuousSystem} in the enthalpy formulation in accord with Definition~\ref{defOfSol-rem} does exist and also \eqref{estimates-of-w} holds but weakened as
$\frac{\partial w}{\partial t}\in L^1(I;W^{1,\infty}(\Omega)^*)+
\mathcal{M}(\bar Q)$.
\end{theorem}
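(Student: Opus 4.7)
The plan is to follow the constructive scheme of Section~\ref{sect-disc}, adapted to the split dissipation $\delta_S^*(\DT\lambda)=\delta_{S_2}^*(A\DT\lambda)+\delta_{S_1}^*(A^\perp\DT\lambda)$, with extra care taken for the rate-independent part $A^\perp\lambda$. At time level $k$ I would replace \eqref{BalanceEqDis} by the incremental problem of minimizing over $(\nu,\lambda)\in{\mathscr Y}^p(\O;\R^d)\times L^{2q}(\Omega;\R^{d+1})$ the functional
\begin{align*}
\Gm(k\tau,&\nu,\lambda,\mathcalI(w_\tau^{k-1}))
+\int_\Omega\!\bigg(\tau|A\lambda|^{2q}
+\delta_{S_1}^*\big(A^\perp(\lambda{-}\lambda_\tau^{k-1})\big)\\
&+\tau\delta_{S_2}^*\Big(\frac{A(\lambda{-}\lambda_\tau^{k-1})}{\tau}\Big)
+\frac{\tau\epsilon}{q}\Big|\frac{A(\lambda{-}\lambda_\tau^{k-1})}{\tau}\Big|^q\bigg)\dd x,
\end{align*}
in which the rate-independent increment is not divided by $\tau$, reflecting the positive one-homogeneity of $\delta_{S_1}^*$. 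Existence of a discrete minimizer follows by the direct method as in Proposition~\ref{ExistenceDiscSol}, and the decoupled enthalpy equation \eqref{EnthalpyEqDis} is kept with $\epsilon|\cdot|^q$ replaced by $\epsilon|A\cdot|^q$ and with $\delta_S^*$ split accordingly.

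For the a-priori estimates I would follow Proposition~\ref{aprioriII} with one key modification. Summing the minimality inequality across time steps yields, beyond the previous bounds, a uniform BV-in-time estimate for $A^\perp\bar\lambda_\tau$ in $L^1(\Omega;\R^{d+1})$ from the accumulated rate-independent dissipation. The rate-dependent flow rule, obtained from the first-order optimality with respect to the component $A\lambda$, controls $A\DT\lambda_\tau$ in $L^q(Q;\R^{d+1})$ analogously to \eqref{LambdaApriori}. The hypothesis ${\rm Ker}\,A\subset{\rm Ker}(\ccoupl\cdot)$ ensures $\ccoupl{\cdot}\DT\lambda=\ccoupl{\cdot}A\DT\lambda$, so that the calorimetric coupling only involves the well-regulated component; the estimates \eqref{Nu-Apriori} and \eqref{EnthalpyLinftyL1}--\eqref{EnthalpyLinftyL1+} then go through essentially unchanged. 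The dual estimate on $\DT w_\tau$ is however relaxed to $L^1(I;W^{1,\infty}(\Omega)^*)+\mathcal{M}(\bar Q)$, reflecting the measure-valued rate-independent heat source.

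Convergence combines Proposition~\ref{MainConvProp} with the rate-independent framework of \cite{tr1,mielke-theil-levitas}. A variant of Helly's selection principle applied to $A^\perp\bar\lambda_\tau$ yields pointwise-in-time weak-$*$ convergence in $L^1(\Omega;\R^{d+1})$; the remaining convergences are as in Proposition~\ref{MainConvProp}, and in particular $A\DT\lambda_\tau\to A\DT\lambda$ strongly in $L^q(Q;\R^{d+1})$ by the argument of Step~5 there. The semistability \eqref{Rem-balanceLaw} is obtained via a \emph{mutual recovery sequence}: for any test pair $(\tilde\nu,\tilde\lambda)$ with $A\tilde\lambda=0$ I substitute $(\tilde\nu,\tilde\lambda+A\lambda_\tau^{k-1})$ (which agrees with $\lambda_\tau^{k-1}$ in its $A$-component, so that the rate-dependent dissipation terms vanish at this test state) into the discrete minimality, and pass to the limit using lower semicontinuity of $\Gm$, continuity of $\delta_{S_1}^*$, and the strong $H^{-1}$-convergence of $\bar\lambda_\tau(t)$ and $L\bulet\bar\nu_\tau(t)$ along subsequences (cf.~Step~3 of Proposition~\ref{MainConvProp}). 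The rate-dependent flow rule passes exactly as in Step~4 of that proposition.

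The main obstacle will be obtaining the total energy balance and identifying the measure $\mathcal{H}$ entering the enthalpy equation. The ``$\le$'' direction of the energy balance follows by weak lower semicontinuity from summing the discrete energy inequality; the converse inequality, which upgrades it to equality, is supplied by the semistability together with the convex structure of $\Gm$, in the spirit of \cite{tr1}. Once energy equality is available, the weak-$*$ limit $\mathcal{H}\in\mathcal{M}(\bar Q)$ of $\delta_{S_1}^*(A^\perp\DT\lambda_\tau)\,\dd x\,\dd t$ can be identified on every box $[t_1,t_2]\times B$ with the spatially-distributed $\delta_{S_1}^*$-variation $\mathrm{Var}_{\delta_{S_1}^*}(\lambda|_B;t_1,t_2)$ by comparing accumulated dissipation with energy release. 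Inserting this identification into the weak limit of the enthalpy equation then yields the formulation of Definition~\ref{defOfSol-rem}(iv), completing the proof.
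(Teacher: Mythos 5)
Your overall scheme (semi-implicit time discretization, a-priori estimates exploiting ${\rm Ker}\,A\subset{\rm Ker}(\ccoupl\cdot)$, Helly selection for $A^\perp\bar\lambda_\tau$, strong $L^q$-convergence of $A\DT\lambda_\tau$, and identification of the defect measure $\mathcal{H}$ from energy equality) follows the same blueprint as the paper's sketch. However, there is a genuine gap in your treatment of the semistability limit.

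Your mutual recovery sequence $\hat\lambda_\tau=\tilde\lambda+A\lambda_\tau^{k-1}$ kills the rate-dependent dissipation at the test state, as intended, but leaves the rate-independent contribution $\int_\Omega\delta_{S_1}^*\big(A^\perp(\hat\lambda_\tau{-}\lambda_\tau^{k-1})\big)\,\dd x=\int_\Omega\delta_{S_1}^*\big(\tilde\lambda{-}A^\perp\lambda_\tau^{k-1}\big)\,\dd x$ on the right-hand side. To pass to the limit in this term you invoke ``continuity of $\delta_{S_1}^*$ and strong $H^{-1}$-convergence of $\bar\lambda_\tau(t)$,'' but the Nemytskii functional $\mu\mapsto\int_\Omega\delta_{S_1}^*(\mu)\,\dd x$ is continuous under \emph{strong $L^1$}-convergence, not under strong $H^{-1}$-convergence; with only weak $L^q(\Omega)$ and strong $H^{-1}(\Omega)$ convergence of $\bar\lambda_\tau(t)$ at fixed $t$, the limit of this term cannot be identified. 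The paper explicitly flags exactly this obstruction (``we do not know that $\lambda_{\tau_{k(t)}}(t)\to\lambda(t)$ strongly in $L^1$'') and circumvents it with a \emph{different} recovery sequence, $\hat\lambda_\tau=\tilde\lambda-A^\perp\lambda(t)+A^\perp\bar\lambda_\tau(t)$, for which the rate-independent term becomes $\int_\Omega\delta_{S_1}^*\big(\tilde\lambda-A^\perp\lambda(t)\big)\,\dd x$, \emph{independent of $\tau$}. The price is that the $H^{-1}$-quadratic penalty in $\mathfrak{G}$ evaluated at $\hat\lambda_\tau$ must then be expanded by the binomial formula and the quadratic remainder $\|A^\perp\bar\lambda_\tau(t)-A^\perp\lambda(t)\|_{H^{-1}}^2$ cancelled against the matching term on the left-hand side before passing to the limit (the ``binomic trick'' of \cite{tr1}). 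Your recovery sequence cannot be repaired without strong $L^1$-in-space convergence at each $t$, which the a-priori estimates do not supply; you should switch to the binomic construction.

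Two secondary points. First, replacing the regularization $\tau|\lambda|^{2q}$ by $\tau|A\lambda|^{2q}$ destroys the $L^{2q}$-coercivity of the incremental functional in the $A^\perp$-component, which the paper relies on for existence of discrete minimizers in $L^{2q}(\Omega;\R^{d+1})$ and for the legality of the $L^2$-right-hand side of the discrete enthalpy equation; the paper keeps the full $\tau|\lambda|^{2q}$. Second, your sketch of the discrete energy inequality is correct in spirit but omits the device the paper actually needs: the incremental minimizer also solves the auxiliary linearized problem \eqref{Rem:TIPAlter} (by convexity), and it is by testing \emph{that} problem with $(\nu_\tau^{k-1},\lambda_\tau^{k-1})$ that one obtains the one-sided discrete energy estimate \eqref{Rem:DisEnergBal}; combining it with the ``inverse'' limit energy inequality \eqref{energyIneqOther} coming from semistability then yields $\lim_{\tau\to0}\int_Q\delta_{S_1}^*(\DT\lambda_\tau)\,\dd x\,\dd t=\mathrm{Var}_{\delta_{S_1}^*}(A^\perp\lambda;[0,T])$, which is the precise ingredient needed to identify $\mathcal{H}$ and pass to the limit in the enthalpy equation.
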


\begin{proof}[Sketch of the proof]
We proceed in a similar way as above by discretizing the problem in time and introducing a time-incremental problem \eqref{BalanceEqDis} with the discrete version of $|\DT{\lambda}|^q$ replaced by $|A \DT{\lambda}|^q$.

The existence of approximate solutions and also a-priori estimates are proved
similarly as above with obvious modifications where there are needed; in
particular Step 1 of Proposition
\ref{aprioriII} gives the boundedness of $\bar{\lambda}_\tau$ in $L^\infty([0,T];H^{-1}(\Omega;\R^{d+1}))\cap
\mathrm{BV}([0,T];L^1(\Omega;\R^{d+1}))$.

As to convergence, even in this case, we can establish obvious modifications of convergences \eqref{lambdaconv}-\eqref{YoungMeasConv} (we replace $\DT{\lambda}$ by $A \DT{\lambda}$ when necessary). To obtain the semistability, we first note that a direct consequence of the time-incremental problem is the \emph{discrete semi-stability condition}
\begin{equation}
\mathfrak{G} (t_{\tau_{k(t)}}, \bar\nu_{\tau_{k(t)}}(t), A^\perp\lambda_{\tau_{k(t)}}(t)) \leq \mathfrak{G}(t_{\tau_{k(t)}},\hat{\nu}, \hat{\lambda}) + \int_\Omega \delta_S^*(A^\perp(\hat{\lambda} - \lambda_{\tau_{k(t)}})) \dd x,
\end{equation}
for any $\hat{\nu} \in {\mathscr Y}^{p}(\O;\R^d)$ and any $\hat{\lambda} \in H^{-1}(\Omega; \R^{d+1})$ such that $A \hat{\lambda} = 0$. This semistability can be converged similarly as above, however as we do not know that $\lambda_{\tau_{k(t)}}(t) \to \lambda(t)$ strongly in $L^1(\Omega; \R^{d+1})$ (would be necessary to establish the convergence of the right-hand-side term $ \delta_S^*(A^\perp(\hat{\lambda} - \lambda_{\tau_{k(t)}}))$ we use that $\mathfrak{G}$ is \emph{quadratic} in $\lambda$ and employ the so-called \emph{binomic trick} as in e.g. \cite{tr1}. This trick is based on choosing the test function as $\hat{\lambda} = \tilde{\lambda} - \lambda(t) + \lambda_{\tau_{k(t)}}(t)$ (where $\tilde{\lambda}$ is arbitrary but such that $A \tilde{\lambda} = 0$), subtracting the right from the left hand side of the semistability, use the binomic formula and then passing to the limit.

The flow rule can be converged in the same manner as above, also we obtain the strong convergence of $A\DT{\lambda}_{\tau} \to A\DT{\lambda}$ in $L^q(Q; \R^{d+1})$. The only delicate part in this case is the convergence of the heat equation, in particular the convergence of the right-hand-side rate-independent dissipation terms. To be able to find the limit of this term, it is necessary to prove the discrete energy inequality
\begin{align}
\mathfrak{G}(T, \bar{\nu}_\tau (T), \bar{\lambda}_\tau (T)) +\int_\Omega \tau|\bar{\lambda}_\tau(T)|^{2q} \dd x+ \int_Q \big(\delta_S^*(\DT{\lambda}_{\tau}) \nonumber + \epsilon |A\DT{\lambda}_{\tau}|^q \big) \dd x \dd t
\\
\leq \mathfrak{G}(0, \bar{\nu}_\tau(0),\bar{\lambda}_\tau(0))+\int_\Omega \tau |\lambda_{\tau, 0}|^{2q} \dd x
+ \int_0^T \mathfrak{G}'_t(t,\underline{ \nu}_\tau,\underline{\lambda}_\tau).
\label{Rem:DisEnergBal}
\end{align}
To get this inequality, we exploit the \emph{convexity of the problem} and realize that solutions of the original time-incremental problem are also solutions of the following auxiliary minimization problem
\begin{align}
\left.\begin{array}{ll}
\mathrm{Minimize} &\Gm(t_k, \nu , \lambda , \mathcalI(w_\tau ^{k-1})) + \displaystyle \int_\Omega \bigg( \tau |\lambda|^{2q} + \tau\delta_S^* \Big(\frac{\lambda -\lambda_\tau ^{k-1}}{\tau}\Big)
\\ &+ \tau \epsilon \Big|\frac{A\lambda^k_\tau -A\lambda^{k-1}_\tau}{\tau}\Big|^{q-2}\Big(\frac{A\lambda^k_\tau-A\lambda_\tau^{k-1}}{\tau}\Big) \Big(\frac{A\lambda-\lambda\tau^{k-1}}{\tau}\Big) \bigg) \dd x\\
\mathrm{subject \, to} &(\nu, \lambda) \in {\mathscr Y}^{p}(\O;\R^d) \times L^{2q}(\Omega; \R^{d+1})\end{array}\right\}
\label{Rem:TIPAlter}
\end{align}
and vice versa. Inequality \eqref{Rem:DisEnergBal} is then got by testing \eqref{Rem:TIPAlter} by $(\nu_\tau^{k-1}, \lambda_\tau^{k-1})$. Using \eqref{Rem:DisEnergBal} and also the ``inverse limit energy inequality"
\begin{align}\nonumber
\mathfrak{G}(T, \nu(T), \lambda(T))+\mathrm{Var}_{\delta_{S_1}^*}(A^\perp\lambda; [0,T])+ \int_Q \delta_{S_2}^*(A\DT{\lambda}) + \epsilon|A \DT{\lambda}|^q \dd x \dd t \\
\geq\mathfrak{G}(0,\nu(0), \lambda(0))
+\int_0^T \mathfrak{G}'_t(t, \nu(t), \lambda(t)) \dd t
\label{energyIneqOther}
\end{align}
that is a consequence of the semistability (see e.g. \cite{tr1}) we can
estimate
\begin{align*}
\int_{\bar Q}&\mathcal{H}(\dd x \dd t)=\mathrm{Var}_{\delta_{S_1}^*}(A^\perp\lambda, [0,T]) \leq \liminf_{\tau \to 0} \int_Q \delta_{S_1}^*(\DT{\lambda}_{\tau}) \dd x \dd t
\leq \limsup_{\tau \to 0} \int_Q \delta_{S_1}^*(\DT{\lambda}_{\tau}) \dd x \dd t \\ &\leq \limsup_{\tau \to \infty} \int_0^T\bigg(
\mathfrak{G}'_t(t,\bar{\nu}_\tau(t))
- \int_\Omega \big( \delta_{S_2}^*(A\DT{\lambda}_\tau) + \epsilon|A \DT{\lambda}_\tau|^q \big)\dd x \bigg)\dd t - \mathfrak{G}(T, \bar{\nu}_\tau(T), \bar{\lambda}_\tau(T)) \\ & \qquad +\mathfrak{G}(0,\nu_0, \lambda_0) - \int_\Omega \big(\tau |\bar{\lambda}_\tau(T)|^{2q} + \tau |\bar{\lambda}_\tau(0)|^{2q}\big) \dd x \\ &
\leq \int_0^T\bigg(\mathfrak{G}'_t(t, \nu(t))
- \int_\Omega \big(\delta_{S_2}^*(A\DT{\lambda})
+ \epsilon|A \DT{\lambda}|^q \big) \dd x\bigg)\dd t
- \mathfrak{G}(T, \nu(T), \lambda(T))+\mathfrak{G}(0,\nu_0, \lambda_0)
\\ &\leq\mathrm{Var}_{\delta_{S_1}^*}(A^\perp\lambda, [0,T]),
\end{align*}
which yields $\lim_{\tau \to 0} \int_Q \delta_{S_1}^*(\DT{\lambda}_\tau) \dd x \dd t = \mathrm{Var}_{\delta_{S_1}^*}(A^\perp\lambda, [0,T])$ and hence allows us to perform the limit passage in the heat equation similarly as above. Note that, due to the form of $\mathfrak{G}$, $\mathfrak{G}'_t$ does not depend on $\lambda$.
\end{proof}

\subsection*{Acknowledgments}
\begin{minipage}{1.0\textwidth}
\baselineskip=10pt
{\small
The authors would like to thank two anonymous referees for helpful suggestions and comments.\\
The authors acknowledge partial support from the grants A~100750802 (GA~AV~\v CR), 106/08/1379, 201/09/0917, 106/09/1573, and 201/10/0357 (GA \v CR), MSM~21620839, VZ6840770021, and 1M06031 (M\v SMT \v CR), 41110 (GAUK \v{C}R) and
MFF-261\,310 and from the research plan AV0Z20760514 (\v CR). This research
was done as an activity within the Ne\v{c}as Center for mathematical modeling
(project LC06052, financed by M\v{S}MT \v{C}R).
}
\end{minipage}

\end{sloppypar}


\begin{thebibliography}{19}
\baselineskip=10pt
\vspace{-.1em}\bibitem{banas-prohl-slodicka}
{\rm L. Ba\v{n}as, A. Prohl, M. Slodi\v{c}ka}: Modeling of thermally assisted magnetodynamics. {\it SIAM J. Num. Anal.} {\bf 47} (2008), 551--574.

\vspace{-.1em}\bibitem{bergqvist}
{\rm A. Bergqvist}: Magnetic vector hysteresis model with dry friction-like
pinning. {\it Physica B} {\bf 233} (1997), 342--347.

\vspace{-.1em}\bibitem{boccardo0}
{\rm L. Boccardo, A. Dall'aglio, T. Gallou\"et, L. Orsina}: Nonlinear
parabolic equations with measure data. {\it J. Funct. Anal.} {\bf 147}
(1997), 237--258.

\vspace{-.1em}\bibitem{boccardo-gallouet1} L. Boccardo, T. Gallou\"et:
Non-linear elliptic and parabolic equations involving measure data.
{\it J. Funct. Anal.} {\bf 87} (1989), 149--169.

\vspace{-.1em}\bibitem{brown3}
{\rm W.F. Brown, Jr.}: {\it Magnetostatic Principles in Ferromagnetism},
Springer, New York, 1966.

\vspace{-.1em}\bibitem{carstensen-prohl}
{\rm C. Carstensen, A. Prohl}: Numerical analysis of relaxed micromagnetics by penalised
finite elements. {\it Numer. Math.} {\bf 90} (2001), 65-99.

\vspace{-.1em}\bibitem{choksi-kohn}
{\rm R. Choksi, R.V. Kohn}: Bounds on the micromagnetic energy of a uniaxial
ferromagnet. {\it Comm. Pure Appl. Math.} {\bf 55} (1998), 259--289.

\vspace{-.1em}\bibitem{dalmaso} G. Dal Maso,G.A. Francfort, R. Toader:
Quasistatic crack growth in nonlinear elasticity,
{\it Arch. Ration. Mech. Anal.} {\bf 176} (2005), pp. 165-225.

\vspace{-.1em}\bibitem{desim}
{\rm A. DeSimone}: Energy minimizers for large ferromagnetic bodies.
{\it Arch. Rat. Mech. Anal.} {\bf 125} (1993), 99--143.

\vspace{-.1em}\bibitem{gilbarg}
{\rm D. Gilbarg, N.S. Trudinger}: {\it Elliptic Partial Differential Equations
of the Second Order.} 2nd ed. Springer, New York, 1983.

\vspace{-.1em}\bibitem{hahn} H. Hahn: {\rm \"{U}ber An\"{a}nherung an Lebesguesche Integrale durch Riemannsche Summen},
Sitzungber. Math. Phys. Kl. K. Akad. Wiss. Wien {\bf 123} (1914), 713-743.

\vspace{-.1em}\bibitem{halphen-nguyen}
B. Halphen, Q. S. Nguyen: Sur les materiaux standards g\'{e}n\'{e}ralis\'{e}s. {\it J.
M\'{e}canique} {\bf 14} (1975), 39-63.

\vspace{-.1em}\bibitem{hubert-schafer}
{\rm A. Hubert, R. Sch\"{a}fer}: {\it Magnetic Domains: The Analysis
of Magnetic Microstructures.} Springer, Berlin, 1998.

\vspace{-.1em}\bibitem{jam-kin}
{\rm R.D. James, D. Kinderlehrer}: Frustration in ferromagnetic
materials. {\it Continuum Mech. Thermodyn.} {\bf 2} (1990), 215--239.

\vspace{-.1em}\bibitem{jam-mu}
{\rm R.D. James, S. M\"uller}: Internal variables and fine scale
oscillations in micromagnetics. {\it Continuum Mech. Thermodyn.} {\bf 6}
(1994), 291--336.

\vspace{-.1em}\bibitem{kruzik-prohl-1}
{\rm M. Kru\v{z}\'{\i}k, A. Prohl}:
Young measure approximation in micromagnetics. {\it Numer. Math.}
{\bf 90} (2001), 291--307.

\vspace{-.1em}\bibitem{kruzik-prohl}
{\rm M. Kru\v{z}\'{\i}k, A. Prohl}: Recent developments in the modeling,
analysis, and numerics of ferromagnetism. {\it SIAM Rev.}, {\bf 48} (2006),
439--483.

\vspace{-.1em}\bibitem{kruzik-roubicek}
{\rm M. Kru\v{z}\'{\i}k, T. Roub\'\i\v cek}:
Specimen shape influence on hysteretic response of bulk ferromagnets.
{\it J. Magnetism and Magn. Mater.} {\bf 256} (2003), 158-167.

\vspace{-.1em}\bibitem{kruzik-roubicek-2}
{\rm M. Kru\v{z}\'{\i}k, T. Roub\'\i\v cek}: Interactions between demagnetizing
field and minor-loop development in bulk ferromagnets.
{\it J. Magnetism and Magn. Mater.} {\bf 277} (2004), 192-200.

\vspace{-.1em}\bibitem{landau-lifshitz}
{\rm L.D. Landau, E.M. Lifshitz}: On theory of the dispersion
of magnetic permeability of ferromagnetic bodies. {\it Physik Z.
Sowjetunion} {\bf 8} (1935), 153--169.

\vspace{-.1em}\bibitem{landau-lifshitz-1}
{\rm L.D. Landau, E.M. Lifshitz}: {\it Course of Theoretical Physics.}
vol. {\bf 8}, Pergamon Press, Oxford, 1960.

\vspace{-.1em}\bibitem{luskin-ma}
{\rm M. Luskin, L. Ma}: Numerical optimization of the micromagnetics energy. In: {\it Proceedings of the Session on ``Mathematics in Smart Materials''}, SPIE, {\bf 1919} (1993), pp. 19--29.

\vspace{-.1em}\bibitem{mielke-diff}
{\rm A. Mielke}: Evolution of rate-independent systems. In:
{\it Handbook of Differential Equations: Evolutionary Diff. Eqs.}
(Eds. C.Dafermos, E.Feireisl), Elsevier, Amsterdam, 2005.
pp.461--559.

\vspace{-.1em}\bibitem{mielke-roubicek}
A. Mielke, T. Roub\'\i\v cek : Rate-independent model of
inelastic behaviour of shape-memory alloys.
{\it Multiscale Modeling Simul.} {\bf 1} (2003), 571-597.

\vspace{-.1em}\bibitem{mielke-roubicek-stefanelli}
A. Mielke, T. Roub\'\i\v cek, U. Stefanelli: $\Gamma$-limits and relaxations
for rate-independent evolutionary problems. {\it Calc. Var.} {\bf 31}
(2008), 387--416.

\vspace{-.1em}\bibitem{mielke-theil-levitas}
{\rm A. Mielke, F. Theil, V.I. Levitas}: Mathematical formulation of
quasistatic phase transformations with friction using an extremum principle.
{\it Arch. Rat. Mech. Anal.} {\bf 162} (2002), 137--177.

\vspace{-.1em}\bibitem{necas}
{\rm J. Ne\v cas, T.Roub\'\i\v cek}:
Buoyancy-driven viscous flow with $L^1$-data. {\it Nonlinear Anal.} {\bf 46}
(2001), 737-755.

\vspace{-.1em}\bibitem{pedregal0}
{\rm P. Pedregal}: Relaxation in ferromagnetism: the rigid case,
{\it J. Nonlinear Sci.} {\bf 4} (1994), 105--125.

\vspace{-.1em}\bibitem{pedregal}
{\rm P. Pedregal}: {\it Parametrized Measures and Variational Principles},
Birkh\"{a}user, Basel, 1997.

\vspace{-.1em}\bibitem{PedregalYan}
{\rm P. Pedregal, B. Yan}: {A Duality Method for Micromagnetics}, {\it SIAM J. Math. Anal.} {\bf 41} (2010), 2431--2452.

\vspace{-.1em}\bibitem{podio-roubicek-tomassetti}
{\rm P. Podio-Guidugli, T. Roub\'\i\v cek, G. Tomassetti}:
A thermodynamically-consistent theory of the ferro/paramagnetic transition.
{\it Arch. Rat. Mech. Anal.} {\bf 198} (2010), 1057-1094.

\vspace{-.1em}\bibitem{rogers}
{\rm R.C. Rogers}: A nonlocal model for the exchange energy in ferromagnetic
materials. {\it J. Int. Eq. Appl.} {\bf 3} (1991), 85--127.

\vspace{-.1em}\bibitem{roubicek0}
{\rm T. Roub\'\i\v cek}: {\it Relaxation in Optimization Theory and
Variational Calculus}. W.~de Gruyter, Berlin, 1997.

\vspace{-.1em}\bibitem{tr1}
T. Roub\'\i\v cek: Thermodynamics of rate independent processes in
viscous solids at small strains. {\it SIAM J. Math. Anal.}
{\bf 40} (2010), 256-297.

\vspace{-.1em}\bibitem{roubicek-kruzik-1}
{\rm T. Roub\'\i\v cek, M. Kru\v{z}\'{\i}k}: Microstructure evolution model in micromagnetics. {\it Z. Angew.
Math. Phys.} {\bf 55} (2004), 159--182.

\vspace{-.1em}\bibitem{roubicek-kruzik-2}
{\rm T. Roub\'\i\v cek, M. Kru\v{z}\'{\i}k}: Mesoscopic model for ferromagnets with isotropic hardening.
{\it Z. Angew. Math. Phys.} {\bf 56} (2005), 107--135.

\vspace{-.1em}\bibitem{roubicek-tomassetti}
{\rm T. Roub\'\i\v cek, G. Tomassetti}: Ferromagnets with eddy currents and pinning effects: their thermodynamics and analysis. {\it Math. Models Meth. Appl. Sci. (M3AS)} {\bf 21} (2011), 29-55.

\vspace{-.1em}\bibitem{tartar}
{\rm L. Tartar}: Beyond Young measure. {\it Meccanica} {\bf 30} (1995),
505--526.

\vspace{-.1em}\bibitem{visintin1}
{\rm A. Visintin}: Modified Landau-Lifshitz equation for ferromagnetism.
{\it Physica B} {\bf 233} (1997), 365--369.

\vspace{-.1em}\bibitem{visintin2}
{\rm A. Visintin}: On some models of ferromagnetism.
In: {\it Free Boundary Problems~I} Chiba, 1999. (N.Kenmochi, ed.)
Gakuto Int. Series in Math. Sci. Appl. {\bf 13}, 2000, pp.411--428.
\end{thebibliography}
\end{document}